\numberwithin{equation}{section}
\newtheorem{theorem}{Theorem}[section]
\newtheorem{lemma}{Lemma}[section]
\newtheorem{proposition}{Proposition}[section]
\newtheorem{corollary}{Corollary}[section]
\newtheorem{assumption}{Assumption}[section]
\def\ba{\boldsymbol{a}}
\def\bc{\boldsymbol{c}}
\def\bu{\boldsymbol{u}}
\def\bx{\boldsymbol{x}}
\def\by{\boldsymbol{y}}
\def\bJ{\boldsymbol{J}}
\def\bX{\boldsymbol{X}}
\def\bY{\boldsymbol{Y}}
\def\bpi{\boldsymbol{\pi}}
\def\bnu{\boldsymbol{\nu}}
\def\btheta{\boldsymbol{\theta}}
\def\bzero{\mathbf{0}}
\def\bone{\mathbf{1}}
\def\calD{\mathcal{D}}
\def\calS{\mathcal{S}}
\def\scrI{\mathscr{I}}
\def\spr{\mbox{\rm spr}}
\def\cp{\mbox{\rm cp}}
\def\adj{\mbox{\rm adj}}
\title{Asymptotic property of the occupation measures in a two-dimensional skip-free Markov modulated random walk}
\author{Toshihisa Ozawa  \\ 
Faculty of Business Administration, Komazawa University \\
1-23-1 Komazawa, Setagaya-ku, Tokyo 154-8525, Japan \\
E-mail: toshi@komazawa-u.ac.jp
}
\date{}
\begin{document}

\maketitle

\begin{abstract}
We consider a discrete-time two-dimensional process $\{(X_{1,n},X_{2,n})\}$ on $\mathbb{Z}^2$ with a background process $\{J_n\}$ on a finite set $S_0$, where individual processes $\{X_{1,n}\}$ and $\{X_{2,n}\}$ are both skip free. We assume that the joint process $\{\bY_n\}=\{(X_{1,n},X_{2,n},J_n)\}$ is Markovian and that the transition probabilities of the two-dimensional process $\{(X_{1,n},X_{2,n})\}$ vary according to the state of the background process $\{J_n\}$. This modulation is assumed to be space homogeneous. We refer to this process as a two-dimensional skip-free Markov modulate random walk. 
For $\by, \by'\in \mathbb{Z}_+^2\times S_0$, consider the process $\{\bY_n\}_{n\ge 0}$ starting from the state $\by$ and let $\tilde{q}_{\by,\by'}$ be the expected number of visits to the state $\by'$ before the process leaves the nonnegative area $\mathbb{Z}_+^2\times S_0$ for the first time. 
For $\by=(x_1,x_2,j)\in \mathbb{Z}_+^2\times S_0$, the measure $(\tilde{q}_{\by,\by'}; \by'=(x_1',x_2',j')\in \mathbb{Z}_+^2\times S_0)$ is called  an occupation measure. Our main aim is to obtain asymptotic decay rate of the occupation measure as the values of $x_1'$ and $x_2'$ go to infinity in a given direction. We also obtain the convergence domain of the matrix moment generating function of the occupation measures.

\smallskip
{\it Key wards}: Markov modulated random walk, Markov additive process, occupation measure, asymptotic decay rate, moment generating function, convergence domain

\smallskip
{\it Mathematics Subject Classification}: 60J10, 60K25
\end{abstract}

%%%%%%%%%%%%%%%%%%%%%%%%%%%%%%%%%%%%%%%
%
%  Section 1
%
%%%%%%%%%%%%%%%%%%%%%%%%%%%%%%%%%%%%%%%
\section{Introduction} \label{sec:intro}

%%%%%%%%%%
%
We consider a discrete-time two-dimensional process $\{(X_{1,n},X_{2,n})\}$ on $\mathbb{Z}^2$, where $\mathbb{Z}$ is the set of all integers, and a background process $\{J_n\}$ on a finite set $S_0=\{1,2,...,s_0\}$, where $s_0$ is the cardinality of $S_0$. We assume that individual processes $\{X_{1,n}\}$ and $\{X_{2,n}\}$ are both skip free, which means that their increments take values in $\{-1,0,1\}$. Furthermore, we assume that the joint process $\{\bY_n\}=\{(X_{1,n},X_{2,n},J_n)\}$ is Markovian and that the transition probabilities of the two-dimensional process $\{(X_{1,n},X_{2,n})\}$ vary according to the state of the background process $\{J_n\}$. This modulation is assumed to be space homogeneous. 
We refer to this process as a two-dimensional skip-free Markov modulate random walk (2d-MMRW for short). The state space of the 2d-MMRW is given by $\mathbb{S}=\mathbb{Z}^2\times S_0$.
The 2d-MMRW is a two-dimensional Markov additive process (2d-MA-process for short) \cite{Miyazawa12}, where $(X_{1,n},X_{2,n})$ is the additive part and $J_n$ the background state. 
A discrete-time two-dimensional quasi-birth-and-death process  \cite{Ozawa13} (2d-QBD process for short) is a 2d-MMRW with reflecting boundaries on the $x_1$ and $x_2$-axes, where the process $(X_{1,n},X_{2,n})$ is the level and $J_n$ the phase. 
Stochastic models arising from various Markovian two-queue models and two-node queueing networks such as two-queue polling models and generalized two-node Jackson networks with Markovian arrival processes and phase-type service processes can be represented as \textit{continuous-time} 2d-QBD processes (see, for example, \cite{Miyazawa15} and \cite{Ozawa13,Ozawa18,Ozawa19a}) and, by using the uniformization technique, they can be deduced to \textit{discrete-time} 2d-QBD processes. 
In that sense, (discrete-time) 2d-QBD processes are more versatile than two-dimensional skip-free reflecting random walks (2d-RRWs for short), which are 2d-QBD processes \textit{without a phase process} and called double QBD processes in \cite{Miyazawa09}. 
It is well known that, in general,  the stationary distribution of a Markov chain can be represented in terms of its stationary probabilities on some boundary faces and its occupation measures. In the case of a 2d-QBD process, such occupation measures are given as those of the corresponding 2d-MMRW. For this reason, we focus on 2d-MMRWs and study their occupation measures, especially, asymptotic properties of the occupation measures. 
Here we briefly explain that the assumption of skip-free is not so restricted. For a given $k>1$, assume that the increments of $X_{1,n}$ and $X_{2,n}$ take values in $\{-k,-(k-1),...,0,1,...,k\}$. For $i\in\{1,2\}$, let ${}^k\!X_{i,n}$ and ${}^k\!M_{i,n}$ be the quotient and remainder of $X_{i,n}$ divided by $k$, respectively, where ${}^k\!X_{i,n}\in\mathbb{Z}$ and $0\le {}^k\!M_{i,n} \le k-1$.  Then, the process $\{({}^k\!X_{1,n},{}^k\!X_{2,n},({}^k\!M_{1,n},{}^k\!M_{2,n},J_n))\}$ becomes a 2d-MMRW \textit{with skip-free jumps}, where $({}^k\!X_{1,n},{}^k\!X_{2,n})$ is the level and $({}^k\!M_{1,n},{}^k\!M_{2,n},J_n)$ the background state. Hence, any 2d-MMRW \textit{with bounded jumps} can be reduced to a 2d-MMRW \textit{with skip-free jumps}. 

%%%%%%%%%%
%
Let $P=\left(p_{(x_1,x_2,j),(x_1',x_2',j')}; (x_1,x_2,j),(x_1',x_2',j')\in\mathbb{S}\right)$ be the transition probability matrix of the 2d-MMRW $\{\bY_n\}$, where $p_{(x_1,x_2,j)(x_1',x_2',j')}=\mathbb{P}(\bY_{n+1}=(x_1',x_2',j')\,|\,\bY_n=(x_1,x_2,j))$. 
By the property of skip-free, each element of $P$, say $p_{(x_1,x_2,j)(x_1',x_2',j')}$, is nonzero only if $x_1'-x_1\in\{-1,0,1\}$ and $x_2'-x_2\in\{-1,0,1\}$. By the property of space-homogeneity, for $(x_1,x_2),(x_1',x_2')\in\mathbb{Z}^2$, $k,l\in\{-1,0,1\}$ and $j,j'\in S_0$, we have $p_{(x_1,x_2,j),(x_1+k,x_2+l,j')}=p_{(x_1',x_2',j),(x_1'+k,x_2'+l,j')}$. 
Hence, the transition probability matrix $P$ can be represented as a block matrix in terms of only the following $s_0\times s_0$ blocks:
\[
A_{k,l}=\left(p_{(0,0,j)(k,l,j')}; \,j,j'\in S_0\right),\ k,l\in\{-1,0,1\}, 
\]
i.e., for $(x_1,x_2),(x_1',x_2')\in\mathbb{Z}^2$, block $P_{(x_1,x_2)(x_1',x_2')}=(p_{(x_1,x_2,j)(x_1',x_2',j')}; \,j,j'\in S_0)$ is given as 
\begin{equation}
P_{(x_1,x_2)(x_1',x_2')}
= \left\{ \begin{array}{ll}
A_{x_1'-x_1,x_2'-x_2}, & \mbox{if $x_1'-x_1,x_2'-x_2\in\{-1,0,1\}$}, \cr
O, & \mbox{otherwise}, 
\end{array} \right.
\end{equation}
where $O$ is a matrix of 0's whose dimension is determined in context. 
Define a set $\mathbb{S}_+$ as $\mathbb{S}_+=\mathbb{Z}_+^2\times S_0$, where $\mathbb{Z}_+$ is the set of all nonnegative integers, and let $\tau$ be the stopping time at which the 2d-MMRW $\{\bY_n\}$ enters $\mathbb{S}\setminus\mathbb{S}_+$ for the first time, i.e., 
\[
\tau =\inf\{n\ge 0; \bY_n\in\mathbb{S}\setminus\mathbb{S}_+\}.
\]
For $\by=(x_1,x_2,j),\by'=(x_1',x_2',j')\in\mathbb{S}_+$, let $\tilde{q}_{\by,\by'}$ be the expected number of visits to the state $\by'$ before the process $\{\bY_n\}$ starting from the state $\by$ enters $\mathbb{S}\setminus\mathbb{S}_+$ for the first time, i.e.,
\begin{equation}
\tilde{q}_{\by,\by'} = \mathbb{E}\bigg(\sum_{n=0}^{\tau-1} 1\big(\bY_n=\by'\big)\,\Big|\,\bY_0=\by \bigg), 
\label{eq:tildeq_def}
\end{equation}
where $1(\cdot)$ is an indicator function. For $\by\in\mathbb{S}_+$, the measure $(\tilde{q}_{\by,\by'}; \by'=(x_1',x_2',j')\in\mathbb{S}_+)$ is called an occupation measure. 
Note that $\tilde{q}_{\by,\by'}$ is the $(\by,\by')$-element of the fundamental matrix of a truncated substochastic matrix $P_+$ given as $P_+=\left(p_{\by,\by'}; \by,\by'\in\mathbb{S}_+\right)$, i.e., $\tilde{q}_{\by,\by'} = [\tilde{P}_+]_{\by,\by'}$ and 
\[
\tilde{P_+}%=\left(\tilde{q}_{(k,l,j)(k',l',j')},\,(k,l,j),(k',l',j')\in\mathbb{S}_+\right)
=\sum_{k=0}^\infty P_+^k,
\]
where, for example, $P_+^2=\left(p^{(2)}_{\by,\by'}\right)$ is defined by $p^{(2)}_{\by,\by'} = \sum_{\by''\in\calS_+} p_{\by,\by''}\,p_{\by'',\by'}$. $P_+$ governs transitions of $\{\bY_n\}$ on the positive quarter-plane.  %quadrant. 
Our main aim is to obtain the asymptotic decay rate of the occupation measure $(\tilde{q}_{\by,\by'}; \by'=(x_1',x_2',j')\in\mathbb{S}_+)$ as the values of $x_1'$ and $x_2'$ go to infinity in a given direction. 
This asymptotic decay rate gives a lower bound for the asymptotic decay rate of the stationary distribution in the corresponding 2d-QBD process in the same direction. Such lower bounds have been obtained for some kinds of multi-dimensional reflected process \textit{without background states}; for example, two-dimensional $0$-partially chains in \cite{Borovkov01}, also see comments on Conjecture 5.1 in \cite{Miyazawa12}. 
With respect to multi-dimensional reflected processes \textit{with background states}, such asymptotic decay rates of the stationary tail distributions in two-dimensional reflected processes have been discussed  in \cite{Miyazawa12,Miyazawa15} by using Markov additive processes and large deviations, but some results seem to be halfway and, in addition, the methods used in those papers are different from ours; we use matrix analytic methods and complex analytic methods. Note that the asymptotic decay rates of the stationary distribution in a 2d-QBD process \textit{in the coordinate directions} have been obtained in \cite{Ozawa13,Ozawa18}.

%%%%%%%%%%%%
%
As mentioned above, the 2d-MMRW $\{\bY_n\}=\{(X_{1,n},X_{2,n},J_n)\}$ is a 2d-MA-process, where the set of blocks, $\{ A_{i,j}; i,j\in\{-1,0,1\} \}$, corresponds to the kernel of the 2d-MA-process. 
Let $A_{*,*}(\theta_1,\theta_2)$ be the matrix moment generating function of one-step transition probabilities defined as
\[
A_{*,*}(\theta_1,\theta_2) = \sum_{i,j\in\{-1,0,1\}} e^{i \theta_1+j \theta_2} A_{i,j}.
\]
$A_{*,*}(\theta_1,\theta_2)$ is the Feynman-Kac operator \cite{Ney87} for the 2d-MA-process. 
%
%We define row vectors $\tilde{\bq}_{\by,\bx'}$ and $\tilde{\bq}_{\by}$ as $\tilde{\bq}_{\by,\bx'}=(\tilde{q}_{\by,(\bx',j')},j'\in S_0)$ and $\tilde{\bq}_{\by}=(\tilde{q}_{\by,\by'},\by'\in\mathbb{S}_+)$, respectively, where $\tilde{\bq}_{\by}=(\tilde{\bq}_{\by,\bx'},\bx'\in\mathbb{Z}_+^2)$. 
%
For $\bx=(x_1,x_2), \bx'=(x_1',x_2')\in\mathbb{Z}_+^2$, define an $s_0\times s_0$ matrix $N_{\bx,\bx'}$ as $N_{\bx,\bx'}=(\tilde{q}_{(\bx,j),(\bx',j')}; j,j'\in S_0)$ and $N_{\bx}$ as $N_{\bx}=(N_{\bx,\bx'}; \bx'\in\mathbb{Z}_+^2)$. In terms of $N_{\bx,\bx'}$,  $\tilde{P}_+$ is represented as $\tilde{P}_+=(N_{\bx,\bx'}; \bx,\bx'\in\mathbb{Z}_+^2)$. 
For $\bx=(x_1,x_2)\in\mathbb{Z}_+^2$, let $\Phi_{\bx}(\theta_1,\theta_2)$ be the matrix moment generating function of the occupation measures defined as 
\[
\Phi_{\bx}(\theta_1,\theta_2)=\sum_{k_1=0}^\infty \sum_{k_2=0}^\infty e^{k_1 \theta_1+k_2 \theta_2} N_{\bx,(k_1,k_2)}.
\]
For $\bx\in\mathbb{Z}_+^2$, define the convergence domain of $\Phi_{\bx}(\theta_1,\theta_2)$ as
\[
\calD_{\bx} = \mbox{the interior of }\{(\theta_1,\theta_2)\in\mathbb{R}^2 : \Phi_{\bx}(\theta_1,\theta_2)<\infty\}.
\]
Define point sets $\Gamma$ and $\calD$ as 
\begin{align*}
&\Gamma = \left\{(\theta_1,\theta_2)\in\mathbb{R}^2; \spr(A_{*,*}(\theta_1,\theta_2))<1 \right\}, \\
&\calD = \left\{(\theta_1,\theta_2)\in\mathbb{R}^2; \mbox{there exists $(\theta_1',\theta_2')\in\Gamma$ such that $(\theta_1,\theta_2)<(\theta_1',\theta_2')$} \right\}, 
\end{align*}
where $\spr(A_{*,*}(\theta_1,\theta_2))$ is the spectral radius of $A_{*,*}(\theta_1,\theta_2)$. 
In the following sections, we will prove that, for any vector $\bc=(c_1,c_2)$ of positive integers and for every $j,j'\in S_0$, 
\[
\lim_{k\to\infty} \frac{1}{k} \log \tilde{q}_{(0,0,j),(c_1 k,c_2 k,j')} 
= -\sup_{\btheta\in\Gamma}\, \langle \bc,\btheta \rangle,
\]
where $\langle \bc, \btheta \rangle$ is the inner product of vectors $\bc$ and $\btheta$. Furthermore, using this asymptotic property, we will also demonstrate that, for any $\bx\in\mathbb{Z}_+^2$, $\calD_{\bx}$ is given by $\calD$.

%%%%%%%%%%%%
%
The rest of the paper is organized as follows.  
In Sect.\ \ref{sec:model}, we present some assumptions and basic properties of the 2d-MMRW.
In Sect.\ \ref{sec:QBDrepresentation}, we introduce three kinds of one-dimensional QBD process with countably many phases and obtain the convergence parameters of the rate matrices in the one-dimensional QBD processes. 
In Sect.\ \ref{sec:convergence_domain}, we consider the matrix generating functions of the occupation measures and demonstrate that the convergence domain of $\Phi_{\bx}(\theta_1,\theta_2)$ contains $\Gamma$. 
Sect.\ \ref{sec:asymptotic} is a main section, where the asymptotic decay rates of the occupation measures and the convergence domain of $\Phi_{\bx}(\theta_1,\theta_2)$ are obtained. 
The paper concludes with remarks on the asymptotic property of 2d-QBD processes in Section \ref{sec:concluding}. 

%%%%%%%%%%%%
%
\bigskip
\textit{Notation for matrices.}\quad 
%$\mathbb{R}$ is the set of all real numbers and $\mathbb{R}_+$ that of all nonnegative real numbers; $\mathbb{Z}$ and $\mathbb{Z}_+$ are already defined. 
%$\mathbb{N}$ is the set of all positive integers and, for $n\ge 1$, $\mathbb{N}_n$ is the set of positive integers less than or equal to $n$, i.e., $\mathbb{N}_n=\{1,2,...,n\}$. 
%A set $\mathbb{H}$ is defined as $\mathbb{H}=\{-1,0,1\}$. 
%
%$\mathbb{C}$ is the set of all complex numbers. For $a,b\in\mathbb{R}_+$, we define $\mathbb{C}[a,b]$ and $\mathbb{C}[a,b)$ as $\mathbb{C}[a,b] = \{z\in\mathbb{C}: a\le |z|\le b \}$ and $\mathbb{C}[a,b) = \{z\in\mathbb{C}: a\le |z|< b \}$, respectively; $\mathbb{C}(a,b]$ and $\mathbb{C}(a,b)$ are analogously defined. 
%
For a matrix $A$, we denote by $[A]_{i,j}$ the $(i,j)$-element of $A$. 
%The transpose of a matrix $A$ is denoted by $A^\top$. 
The convergence parameter of a square matrix $A$ with a finite or countable dimension is denoted by $\cp(A)$, i.e., $\cp(A) = \sup\{r\in\mathbb{R}_+; \sum_{n=0}^\infty r^n A^n<\infty \}$. 
For a finite-dimensional square matrix $A$, we denote by $\spr(A)$ the spectral radius of $A$, which is the maximum modulus of eigenvalue of $A$. If a square matrix $A$ is finite and nonnegative, we have $\spr(A)=\cp(A)^{-1}$. 
The determinant of a square matrix $A$ is denoted by $\det A$ and the adjugate matrix by $\adj\, A$.
%
%$O$ is a matrix of $0$'s, 
%$\bone$ is a column vector of $1$'s 
%and $\bzero$ is a column vector of $0$'s; their dimensions, which are finite or countably infinite, are determined in context. $I$ is the identity matrix. 

%%%%%%%%%%%%%%%%%%%%%%%%%%%%%%%%%%%%%%%
%
%  Section 2
%
%%%%%%%%%%%%%%%%%%%%%%%%%%%%%%%%%%%%%%%
\section{Preliminaries} \label{sec:model}

We give some assumptions and propositions, which will be necessary in the following sections.
First, we assume the following condition. 
\begin{assumption} \label{as:P_irreducible}
The 2d-MMRW $\{\bY_n\}=\{(X_{1,n},X_{2,n},J_n)\}$ is irreducible and aperiodic. 
\end{assumption}

Under this assumption, for any $\theta_1,\theta_2\in\mathbb{R}$, $A_{*,*}(\theta_1,\theta_2)$ is also irreducible and aperiodic. 
Denote $A_{*,*}(0,0)$ by $A_{*,*}$ and define the following matrices: for $i,j\in\{-1,0,1\}$, 
\[
A_{*,j}=\sum_{k\in\{-1,0,1\}} A_{k,j},\quad
A_{i,*}=\sum_{k\in\{-1,0,1\}} A_{i,k}. 
%A_{*,*}=\sum_{k\in\{-1,0,1\}} \sum_{l=-1}^1A_{k,l}.
\]
$A_{*,*}$ is the transition probability matrix of the background process $\{J_n\}$. 
%, and it is obvious that the following property holds. 
%
%\begin{proposition} \label{pr:irreducibility_PandAss}
%If $P$ is irreducible and aperiodic, then $A_{*,*}$ is also irreducible and aperiodic.
%\end{proposition}
%
Since $A_{*,*}$ is finite and irreducible, it is positive recurrent. Denote by $\bpi_{*,*}$ the stationary distribution of $A_{*,*}$. Define the mean increment vector of the process $\{\bY_n\}$, denoted by $\ba=(a_1,a_2)$, as follows.
\begin{equation}
a_1 = \bpi_{*,*} \left(-A_{-1,*}+A_{1,*} \right) \bone,\quad
a_2 = \bpi_{*,*} \left(-A_{*,-1}+A_{*,1} \right) \bone,
\end{equation}
where $\bone$ is a column vector of 1's whose dimension is determined in context.
With respect to the occupation measures defined in Sect.\ \ref{sec:intro}, the following property holds.
\begin{proposition} \label{pr:finiteness_tildeQ}
If $a_1<0$ or $a_2<0$, then, for any $\by\in\mathbb{S}_+$, the occupation measure $(\tilde{q}_{\by,\by'}; \by'\in\mathbb{S}_+)$ is finite, i.e., 
\begin{equation}
\sum_{\by'\in\mathbb{S}_+} \tilde{q}_{\by,\by'} = \mathbb{E}(\tau\,|\,\bY_0=\by) < \infty, 
\end{equation}
where $\tau$ is the stopping time at which $\{\bY_n\}$ enters $\mathbb{S}\setminus\mathbb{S}_+$ for the first time. 
\end{proposition}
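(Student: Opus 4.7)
Without loss of generality assume $a_1<0$; the case $a_2<0$ is symmetric. The plan is to dominate $\tau$ by a first passage time of the marginal first coordinate. Setting $\tau_1=\inf\{n\ge 0: X_{1,n}=-1\}$, skip-freeness of $X_1$ gives $X_{1,\tau_1}=-1$ on $\{\tau_1<\infty\}$, and any such state lies in $\mathbb{S}\setminus\mathbb{S}_+$, so $\tau\le\tau_1$ pointwise. Hence it suffices to prove $\mathbb{E}(\tau_1\,|\,\bY_0=\by)<\infty$ for each $\by=(x_1,x_2,j)\in\mathbb{S}_+$.

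By space-homogeneity the conditional mean increment of $X_{1,n}$ depends only on $J_n$; writing $\bmu=(-A_{-1,*}+A_{1,*})\bone$ for the vector of these drifts, we have $a_1=\bpi_{*,*}\bmu$, so $\bmu-a_1\bone$ is orthogonal to the stationary vector of the finite irreducible matrix $A_{*,*}$. The Fredholm alternative therefore provides a (automatically bounded) solution $\bh=(h(j))_{j\in S_0}$ of the Poisson equation
\[
(I-A_{*,*})\bh=\bmu-a_1\bone.
\]
A direct computation using this identity then shows that $M_n:=X_{1,n}+h(J_n)-a_1\,n$ is a martingale with respect to the natural filtration of $\{\bY_n\}$.

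The increments $M_{n+1}-M_n$ are uniformly bounded (skip-freeness of $X_1$ together with finiteness of $S_0$), so the optional stopping theorem applied at the bounded time $\tau_1\wedge n$ gives
\[
\mathbb{E}\bigl[X_{1,\tau_1\wedge n}+h(J_{\tau_1\wedge n})\bigr]-a_1\,\mathbb{E}[\tau_1\wedge n]=x_1+h(j).
\]
Using $X_{1,\tau_1\wedge n}\ge -1$ (skip-freeness, since between $0$ and $\tau_1$ the coordinate $X_{1,\cdot}$ stays $\ge 0$) and $h(J_{\tau_1\wedge n})\ge\min_{j'}h(j')$, together with $-a_1>0$, one obtains the uniform-in-$n$ bound
\[
\mathbb{E}[\tau_1\wedge n]\le\frac{x_1+h(j)+1-\min_{j'}h(j')}{-a_1}.
\]
Monotone convergence as $n\to\infty$ then yields $\mathbb{E}(\tau_1\,|\,\bY_0=\by)<\infty$, whence $\mathbb{E}(\tau\,|\,\bY_0=\by)<\infty$ and the occupation measure is finite. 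The only non-routine ingredient is solvability of the Poisson equation, which is where Assumption~\ref{as:P_irreducible} (through irreducibility of the finite background kernel $A_{*,*}$) is essentially used; once the Lyapunov-type function $X_{1,n}+h(J_n)$ is in hand the rest is standard.
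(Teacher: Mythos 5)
Your proof is correct, and it takes a genuinely different route from the paper's. Both arguments begin the same way: reduce to $a_1<0$ and dominate $\tau$ by the first passage time of $X_{1,\cdot}$ below $0$ (your $\tau_1$ coincides with the paper's $\check{\tau}$ by skip-freeness). From there the paper transfers $\check{\tau}$ to the hitting time of level $0$ in an auxiliary one-dimensional QBD process built from the blocks $A_{-1,*}$, $A_{0,*}$, $A_{1,*}$, and then appeals to the standard matrix-analytic fact that a QBD with negative mean drift is positive recurrent, so that the first passage to level $0$ has finite mean. You instead give a self-contained Lyapunov/optional-stopping argument: solve the Poisson equation $(I-A_{*,*})\bh=\bmu-a_1\bone$ for the finite irreducible background kernel (solvable precisely because $\bpi_{*,*}(\bmu-a_1\bone)=0$, and bounded because $S_0$ is finite), check that $M_n=X_{1,n}+h(J_n)-a_1 n$ is a martingale, stop it at the bounded time $\tau_1\wedge n$, bound $X_{1,\tau_1\wedge n}\ge -1$ and $h$ from below, and let $n\to\infty$ by monotone convergence. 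Your approach essentially re-proves, for this particular structure, the QBD positive-recurrence fact the paper cites; in exchange you avoid an external reference and also avoid the paper's slight hand-wave that the QBD's boundary blocks are ``given appropriately'' (harmless there, since $\check{\tau}^Q$ started from level $x_1+1>0$ never sees them, but your version sidesteps the issue entirely). Both proofs are valid; yours is more elementary and explicit, the paper's is shorter modulo the cited theory.
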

\begin{proof}
Without loss of generality, we assume $a_1<0$. Let $\check{\tau}$ be the stopping time at which $X_{1,n}$ becomes less than 0 for the first time, i.e., $\check{\tau} = \inf\{n\ge 0; X_{1,n}<0 \}$. Since $\{(x_1,x_2,j)\in\mathbb{S}; x_1<0 \} \subset \mathbb{S}\setminus\mathbb{S}_+$, we have $\tau\le \check{\tau}$, and this implies that, for any $\by\in\mathbb{S}_+$, 
\begin{equation}
\mathbb{E}(\tau\,|\,\bY_0=\by)\le\mathbb{E}(\check{\tau}\,|\,\bY_0=\by). 
\end{equation}
Next, we demonstrate that $\mathbb{E}(\check{\tau}\,|\,\bY_0=\by)$ is finite. Consider a one-dimensional QBD process $\{\check{\bY}_n\}=\{(\check{X}_n,\check{J}_n)\}$ on $\mathbb{Z}_+\times S_0$, having $A_{-1,*}$, $A_{0,*}$ and $A_{1,*}$ as the transition probability blocks that govern transitions of the QBD process when $\check{X}_n>0$. We assume the transition probability blocks that govern transitions of the QBD process when $\check{X}_n=0$ are given appropriately. 
Then, $a_1$ is the mean increment of the QBD process when $\check{X}_n>0$ and the assumption of $a_1<0$ implies that the QBD process is positive recurrent. 
Define a stopping time $\check{\tau}^Q$ as $\check{\tau}^Q=\inf\{n\ge 0; \check{X}_n=0\}$. We have, for any $\by=(x_1,x_2,j)\in\mathbb{S}$, 
\begin{equation}
\mathbb{E}(\check{\tau}\,|\,\bY_0=\by) = \mathbb{E}(\check{\tau}^Q\,|\,\check{Y}_0=(x_1+1,j)) < \infty,
\end{equation}
and this completes the proof.
\end{proof}

Hereafter, we assume the following condition. 
\begin{assumption} \label{as:finiteness_tildeQ}
$a_1<0$ or $a_2<0$.
%The condition of Proposition \ref{pr:finiteness_tildeQ} is satisfied. 
\end{assumption}

%%%%%%%%%%
%
Let  $\chi(\theta_1,\theta_2)$ be the maximum eigenvalue of $A_{*,*}(\theta_1,\theta_2)$. Since $A_{*,*}(\theta_1,\theta_2)$ is nonnegative, irreducible and aperiodic, $\chi(\theta_1,\theta_2)$ is the Perron-Frobenius eigenvalue of $A_{*,*}(\theta_1,\theta_2)$, i.e., $\chi(\theta_1,\theta_2)=\spr(A_{*,*}(\theta_1,\theta_2))$. 
The modulus of every eigenvalue of $A_{*,*}(\theta_1,\theta_2)$ except $\chi(\theta_1,\theta_2)$ is strictly less than $\spr(A_{*,*}(\theta_1,\theta_2))$. 
We say that a positive function $f(x,y)$ is log-convex in $(x,y)$ if $\log f(x,y)$ is convex in $(x,y)$. A log-convex function is also a convex function. 
With respect to $\chi(\theta_1,\theta_2)$, the following property holds. 
\begin{proposition}[Proposition 3.1 of \cite{Ozawa13}] \label{pr:chiconvex}
$\chi(\theta_1,\theta_2)$ is log-convex and hence convex in $(\theta_1,\theta_2) \in \mathbb{R}^2$. 
\end{proposition}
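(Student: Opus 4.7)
The plan is to invoke Gelfand's formula, $\chi(\theta_1,\theta_2) = \spr(A_{*,*}(\theta_1,\theta_2)) = \lim_{n\to\infty} \|A_{*,*}(\theta_1,\theta_2)^n\|^{1/n}$, and to show that every function in the approximating sequence is log-convex in $(\theta_1,\theta_2)$; the pointwise limit then inherits log-convexity, and ordinary convexity follows because the exponential is increasing and convex.

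First I would examine a single entry of $A_{*,*}(\theta_1,\theta_2)$, which has the form $[A_{*,*}(\theta_1,\theta_2)]_{k,l} = \sum_{i,j\in\{-1,0,1\}} e^{i\theta_1+j\theta_2}\,[A_{i,j}]_{k,l}$. Each nonzero summand is $c\,e^{i\theta_1+j\theta_2}$ with $c>0$, which is log-affine and hence log-convex. Since sums of log-convex functions are log-convex (a standard consequence of H\"{o}lder's inequality applied to $\log$), every entry of $A_{*,*}(\theta_1,\theta_2)$ is log-convex on $\mathbb{R}^2$.

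Next I would argue by induction on $n$ that every entry of $A_{*,*}(\theta_1,\theta_2)^n$ is log-convex. The induction step is routine once one observes that log-convex functions are closed under products (since $\log(fg)=\log f+\log g$, and the sum of convex functions is convex) and under sums (as above); an entry of the $n$-th power is a sum of products of $n$ entries of $A_{*,*}(\theta_1,\theta_2)$. Taking the entry-sum norm $\|B\|=\sum_{k,l}|B_{k,l}|$, which is log-convex on nonnegative matrices whose entries are log-convex, and then raising to the $1/n$ power (which rescales $\log$ by the positive constant $1/n$ and thus preserves convexity of the log), I obtain that $\|A_{*,*}(\theta_1,\theta_2)^n\|^{1/n}$ is log-convex for every $n$.

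Finally, pointwise limits of convex functions are convex, so pointwise limits of log-convex functions are log-convex; by Gelfand's formula this limit equals $\chi(\theta_1,\theta_2)$, which is therefore log-convex. The main obstacle is really just cleanly invoking the four closure properties of the log-convex class (sums, products, positive powers, and pointwise limits); none is deep, but each should be stated precisely rather than glossed over. An alternative route would use the Collatz--Wielandt variational characterization of $\spr$ directly, but the Gelfand approach seems cleaner here because the log-convexity of each entry of $A_{*,*}(\theta_1,\theta_2)$ is immediate from its defining formula as a positive combination of exponentials.
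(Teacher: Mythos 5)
The paper cites this as Proposition~3.1 of [Ozawa13] and gives no in-paper proof, so there is nothing to compare line by line; your argument is effectively a direct re-derivation of Kingman's log-convexity theorem (the spectral radius of a matrix whose entries are log-convex functions of a parameter is itself log-convex), which is exactly the result underlying the cited proposition. Your proof is correct: each entry of $A_{*,*}(\theta_1,\theta_2)$ is a nonnegative combination of exponentials of affine functions, hence log-convex; log-convexity is preserved by sums (H\"older), products, positive scalar powers, and pointwise limits of positive functions; and Gelfand's formula with the entry-sum norm (submultiplicative, and in any case all norms give the same limit in finite dimension) delivers the conclusion. The only place to be slightly careful is the final limit step: one should note that $\chi(\theta_1,\theta_2)>0$ (it is the Perron--Frobenius eigenvalue of an irreducible nonnegative matrix, by Assumption~\ref{as:P_irreducible}), so $\log\chi$ is finite and the pointwise limit of the convex functions $\tfrac1n\log\|A_{*,*}(\theta_1,\theta_2)^n\|$ is indeed a finite convex function. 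With that said, your approach is the standard one and would serve as a self-contained proof of the proposition.
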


Let $\bar{\Gamma}$ be the closure of $\Gamma$, i.e., $\bar{\Gamma}=\{ (\theta_1,\theta_2)\in\mathbb{R}^2 : \chi(\theta_1,\theta_2) \le 1\}$.
%
%Since we have $\chi(1,1)=\chi(e^0,e^0)=1$, $\bar{\Gamma}$ contains the point of $(0,0)$ and thus it is not empty. By Proposition \ref{pr:chiconvex}, 
By Proposition \ref{pr:chiconvex}, $\bar{\Gamma}$ is a convex set. 
%
%For $i\in\{1,2\}$, define the upper and lower extreme values of $\bar{\Gamma}$ with respect to $s_i$, denoted by $s_i^{max}$ and $s_i^{min}$, as 
%\[
%s_i^{max} = \sup_{(s_1,s_2)\in\bar{\Gamma}} s_i \ge 0,\quad 
%s_i^{min} = \inf_{(s_1,s_2)\in\bar{\Gamma}} s_i \le 0. 
%\]
%By the following proposition, for $i\in\{1,2\}$, $s_i^{max}$ and $s_i^{min}$ are finite under Assumption \ref{as:P_irreducible}.
%
Furthermore, the following property holds. 
\begin{proposition}[Lemma 2.2 of \cite{Ozawa18}]  \label{pr:barGamma_bounded}
%Under Assumption \ref{as:P_irreducible}, 
$\bar{\Gamma}$ is bounded. 
\end{proposition}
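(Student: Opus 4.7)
The plan is to argue by contradiction, combining the convexity of $\bar\Gamma$ with an exponential growth lower bound on $\chi$ along every direction. Since $\bar\Gamma$ is closed and convex (Proposition \ref{pr:chiconvex} applied to the inequality $\chi\le 1$), if it were unbounded it would contain a ray $\{\btheta^0+t(d_1,d_2):t\ge 0\}$ for some $(d_1,d_2)\ne(0,0)$, by the standard recession-cone fact for closed convex subsets of $\mathbb{R}^2$. On such a ray one would have $\chi(\btheta^0+t(d_1,d_2))\le 1$ for all $t\ge 0$, so it suffices to prove that for every nonzero direction $(d_1,d_2)$ and every base point $\btheta^0\in\mathbb{R}^2$, $\chi(\btheta^0+t(d_1,d_2))\to\infty$ as $t\to\infty$.

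To produce that growth I would rely on two elementary ingredients. First, for any nonnegative square matrix $A$ and any index $j$, $\spr(A)\ge A_{jj}$, since $[A^n]_{jj}\ge A_{jj}^n$ forces $\spr(A)=\lim_n [A^n]_{jj}^{1/n}\ge A_{jj}$. Second, the Feynman-Kac structure of the MA-kernel gives a probabilistic representation of the diagonal entries of iterates:
\[
\bigl[A_{*,*}(\btheta)^n\bigr]_{j,j}
= \mathbb{E}\!\left[e^{\theta_1 X_{1,n}+\theta_2 X_{2,n}}\mathbf{1}(J_n=j)\,\Big|\,\bY_0=(0,0,j)\right].
\]
Applying the first fact to $A_{*,*}(\btheta)^n$ and using $\spr(A_{*,*}(\btheta))^n=\spr(A_{*,*}(\btheta)^n)$ yields, for each $n\ge 1$ and each $j\in S_0$,
\[
\chi(\btheta)^n \;\ge\; \bigl[A_{*,*}(\btheta)^n\bigr]_{j,j}.
\]

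To close the argument I would invoke Assumption \ref{as:P_irreducible}. Fix any $j\in S_0$, choose a lattice point $(x_1^*,x_2^*)\in\mathbb{Z}^2$ with $x_1^*d_1+x_2^*d_2>0$, and use irreducibility of $\{\bY_n\}$ to produce some $n_0\ge 1$ with
\[
p_{n_0}:=\mathbb{P}\bigl(X_{1,n_0}=x_1^*,\,X_{2,n_0}=x_2^*,\,J_{n_0}=j\,\big|\,\bY_0=(0,0,j)\bigr)>0.
\]
Retaining only this single favourable term in the expectation above and specializing to $\btheta=\btheta^0+t(d_1,d_2)$ gives
\[
\chi\!\bigl(\btheta^0+t(d_1,d_2)\bigr)^{n_0}
\;\ge\; p_{n_0}\,e^{x_1^*\theta_1^0+x_2^*\theta_2^0}\,e^{(x_1^*d_1+x_2^*d_2)t},
\]
which diverges as $t\to\infty$, contradicting $\chi\le 1$ along the ray.

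The only step needing any care is the use of irreducibility: the starting and ending phases must coincide so that a diagonal entry of the $n_0$-th power can be bounded below. This is not a real obstacle, because irreducibility of $\{\bY_n\}$ on $\mathbb{Z}^2\times S_0$ provides a positive-probability sample path from $(0,0,j)$ to $(x_1^*,x_2^*,j)$ for any prescribed spatial target once $n_0$ is allowed to depend on the direction; the rest of the argument is then immediate.
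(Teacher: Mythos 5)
The paper does not prove this proposition; it is cited as Lemma 2.2 of \cite{Ozawa18}, so there is no in-paper argument to compare against. Your proof is nevertheless correct and self-contained. The chain of inequalities is sound: for a nonnegative square matrix $B$ one has $\spr(B)\ge\limsup_n ([B^n]_{jj})^{1/n}\ge B_{jj}$, and applying this to $B=A_{*,*}(\btheta)^n$ together with the space-homogeneous Feynman--Kac identity $[A_{*,*}(\btheta)^n]_{j,j}=\mathbb{E}\bigl[e^{\theta_1 X_{1,n}+\theta_2 X_{2,n}}\mathbf{1}(J_n=j)\mid\bY_0=(0,0,j)\bigr]$ gives $\chi(\btheta)^n\ge[A_{*,*}(\btheta)^n]_{j,j}$. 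Irreducibility of $\{\bY_n\}$ on $\mathbb{Z}^2\times S_0$ (Assumption~\ref{as:P_irreducible}) supplies, for any direction $(d_1,d_2)\ne 0$, a target $(x_1^*,x_2^*)$ with $x_1^*d_1+x_2^*d_2>0$ reachable from $(0,0,j)$ back to phase $j$ in some $n_0$ steps; retaining that single path gives the exponential divergence along the ray, contradicting $\chi\le 1$ there. Combined with the closed-convex recession-cone fact that an unbounded closed convex set contains a ray, the argument is complete. One cosmetic point: you implicitly use that $\bar\Gamma$ is nonempty (so a ray has a base point), which holds since $\chi(0,0)=1$; it would be worth stating.
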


%%%%%%%%%%%
%
For $\by=(\bx,j)=(x_1,x_2,j)\in\mathbb{S}_+$, we give an asymptotic inequality for the occupation measure $(\tilde{q}_{\by,\by'}; \by'\in\mathbb{S}_+)$.  
Under Assumption \ref{as:finiteness_tildeQ}, the occupation measure is finite and $(\tilde{q}_{\by,\by'}/E(\tau\,|\,\bY_0=\by); \by'\in\mathbb{S}_+)$ becomes a probability measure. Let $\bY=(\bX,J)=(X_1,X_2,J)$ be a random vector subject to the probability measure, i.e., $P(\bY=\by')= \tilde{q}_{\by,\by'}/E(\tau\,|\,\bY_0=\by)$ for $\by'\in\mathbb{S}_+$.
By Markov's inequality, for $\btheta=(\theta_1,\theta_2)\in\mathbb{R}^2$ and for $\bc=(c_1,c_2)\in\mathbb{R}_+^2$ such that $\bc\ne\bzero$, where $\bzero$ is a vector of 0's whose dimension is determined in context, we have
\begin{align*}
\mathbb{E}(e^{\langle \btheta,\bX \rangle} 1(J=j')) 
&\ge e^{k \langle \btheta,\bc \rangle} P(e^{\langle \btheta,\bX \rangle} 1(J=j') \ge e^{k \langle \btheta,\bc \rangle}) \cr
&= e^{k \langle \btheta,\bc \rangle} P(\langle \btheta,\bX \rangle \ge \langle \btheta,k \bc \rangle, J=j' ) \cr
&\ge e^{k \langle \btheta,\bc \rangle} P(\bX \ge k \bc, J=j' ).
\end{align*}
This implies that, for every $l_1,l_2\in\mathbb{Z}_+$, 
\begin{align}
& [\Phi_{\bx}(\btheta)]_{j,j'} 
\ge e^{k \langle \btheta,\bc \rangle} \sum_{x_1'\ge k c_1}\ \sum_{x_2'\ge k c_2} \tilde{q}_{\by,(x_1',x_2',j')}
\ge e^{k \langle \btheta,\bc \rangle} \tilde{q}_{\by,(k (\lfloor c_1 \rfloor +l_1),k (\lfloor c_2 \rfloor +l_2),j')}, 
\end{align}
where $\lfloor x \rfloor$ is the largest integer less than or equal to $x$. Hence, considering the convergence domain of $\Phi_{\bx}(\btheta)$,we immediately obtain the following basic inequality.
\begin{proposition} \label{pr:limsup_tildeqnn}
For any $\bc=(c_1,c_2)\in\mathbb{Z}_+^2$ such that $\bc\ne\bzero$ and for every $(\bx,j)\in\mathbb{S}_+$, $j'\in S_0$ and $l_1,l_2\in\mathbb{Z}_+$,  
\begin{equation}
 \limsup_{k\to\infty} \frac{1}{k} \log \tilde{q}_{(\bx,j),(k c_1+l_1,k c_2+l_2,j')} \le - \sup_{\btheta\in\calD_{\bx}} \langle \btheta,\bc \rangle. 
 \label{eq:tildeq_upper}
\end{equation}
\end{proposition}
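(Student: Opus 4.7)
The plan is to push the Markov-inequality chain sketched in the paragraph just before the proposition to its logical conclusion; essentially all of the work has already been done, and what remains is the asymptotic rearrangement. Fix any $\btheta\in\calD_{\bx}$. By definition of $\calD_{\bx}$ the entry $[\Phi_{\bx}(\btheta)]_{j,j'}$ is finite. The displayed chain in the excerpt (after specializing to $\bc\in\mathbb{Z}_+^2$, so that $\lfloor c_i\rfloor=c_i$) yields
\[
[\Phi_{\bx}(\btheta)]_{j,j'}\ \ge\ e^{k\langle\btheta,\bc\rangle}\,\tilde{q}_{(\bx,j),(k c_1+l_1,k c_2+l_2,j')},
\]
which rearranges to
\[
\tilde{q}_{(\bx,j),(k c_1+l_1,k c_2+l_2,j')}\ \le\ [\Phi_{\bx}(\btheta)]_{j,j'}\,e^{-k\langle\btheta,\bc\rangle}.
\]

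Next I would take logarithms, divide by $k$, and send $k\to\infty$. The term $\frac{1}{k}\log[\Phi_{\bx}(\btheta)]_{j,j'}$ drops out, since it is a finite constant independent of $k$, leaving
\[
\limsup_{k\to\infty}\frac{1}{k}\log \tilde{q}_{(\bx,j),(k c_1+l_1,k c_2+l_2,j')}\ \le\ -\langle\btheta,\bc\rangle.
\]
Because $\btheta$ was an arbitrary point of $\calD_{\bx}$, taking the supremum over $\calD_{\bx}$ on the right-hand side produces \eqref{eq:tildeq_upper}.

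I foresee no real obstacle here. The one minor bookkeeping point is that the Markov step in the excerpt silently uses $\btheta\ge\bzero$ when it passes from $\mathbb{P}(\langle\btheta,\bX\rangle\ge\langle\btheta,k\bc\rangle,J=j')$ to $\mathbb{P}(\bX\ge k\bc,J=j')$. To obtain the bound for \emph{every} $\btheta\in\calD_{\bx}$ (not merely those with nonnegative components), I would sidestep Markov's inequality and instead just retain one term of the nonnegative series defining $[\Phi_{\bx}(\btheta)]_{j,j'}$: the single summand indexed by $(x_1',x_2')=(k c_1+l_1,k c_2+l_2)$ already gives the displayed lower bound, with no sign restriction on $\btheta$. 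This justifies taking the supremum over all of $\calD_{\bx}$, matching the statement of the proposition, after which the computation is a one-line limit.
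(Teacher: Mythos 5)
Your proof is correct and follows essentially the route the paper intends: start from the bound $[\Phi_{\bx}(\btheta)]_{j,j'}\ge e^{k\langle\btheta,\bc\rangle}\tilde{q}_{(\bx,j),(\cdot,\cdot,j')}$, rearrange, take $\tfrac{1}{k}\log$, send $k\to\infty$, and finally take the supremum over $\btheta$. The paper does not spell out these final steps (it simply says ``we immediately obtain''), so your write-up supplies exactly the missing arithmetic.

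Your observation about the Markov step is well taken: the passage from $\mathbb{P}(\langle\btheta,\bX\rangle\ge\langle\btheta,k\bc\rangle,J=j')$ to $\mathbb{P}(\bX\ge k\bc,J=j')$ does require $\btheta\ge\bzero$, yet the supremum in the proposition is over all of $\calD_{\bx}$, which certainly contains points with negative coordinates. Your replacement argument — keeping only the single nonnegative summand of the series for $[\Phi_{\bx}(\btheta)]_{j,j'}$ indexed by $(kc_1+l_1,kc_2+l_2)$ — is cleaner and works for every $\btheta\in\calD_{\bx}$ without a sign restriction. Note only that the single-term extraction yields $[\Phi_{\bx}(\btheta)]_{j,j'}\ge e^{k\langle\btheta,\bc\rangle+\langle\btheta,\bl\rangle}\tilde{q}_{(\bx,j),(kc_1+l_1,kc_2+l_2,j')}$ with $\bl=(l_1,l_2)$, so it differs from the paper's displayed bound by the $k$-independent factor $e^{\langle\btheta,\bl\rangle}$; this factor vanishes after dividing by $k$ and passing to the limit, so the conclusion is unaffected. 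All in all, your argument is a valid (indeed slightly tightened) version of the paper's own.
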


%We will use this proposition in the case of $\bc=(1,1)$. 

%%%%%%%%%%%%%%%%%%%%%%%%%%%%%%%%%%%%%%%
%
%  Section 3
%
%%%%%%%%%%%%%%%%%%%%%%%%%%%%%%%%%%%%%%%
\section{QBD representations with a countable phase space} \label{sec:QBDrepresentation}

In order to analyze the occupation measures, we introduce three kinds of one-dimensional QBD process with countably many phases. 
Let $\{\bY_n\}=\{(X_{1,n},X_{2,n},J_n)\}$ be a 2d-MMRW and $\tau$ be the stopping time defined in Sect.\ \ref{sec:intro}, i.e.,  $\tau=\inf\{n\ge 0: \bY_n\in\mathbb{S}\setminus\mathbb{S}_+\}$. 
Using $\tau$, we define a process $\{\hat{\bY}_n\}=\{(\hat{X}_{1,n},\hat{X}_{2,n},\hat{J}_n)\}$ as 
\[
\hat{\bY}_n= \bY_{\tau \wedge n},\ n\ge 0, 
\] 
where $x\wedge y= \min\{x,y\}$. The process $\{\hat{\bY}_n\}$ is an absorbing Markov chain on the state space $\mathbb{S}$, where the set of absorbing states is given by $\mathbb{S}\setminus\mathbb{S}_+$. Hereafter, we restrict the state space of $\{\hat{\bY}_n\}$ to $\mathbb{S}_+$, where the transition probability matrix of the process is given by $P_+$. 
We assume the following condition through the paper.
\begin{assumption} \label{as:Q_irreducible}
$P_+$ is irreducible.
\end{assumption}

Under this assumption, $P$ is irreducible regardless of Assumption \ref{as:P_irreducible} and every element of $\tilde{P}_+$ is positive. 
We consider three kinds of QBD representation for $\{\hat{Y}_n\}$: the first is $\{\hat{\bY}_n^{(1)}\}=\{(\hat{X}_{1,n},(\hat{X}_{2,n},\hat{J}_n))\}$, where $\hat{X}_{1,n}$ is the level and $(\hat{X}_{2,n},\hat{J}_n)$ is the phase, the second $\{\hat{\bY}_n^{(2)}\}=\{(\hat{X}_{2,n},(\hat{X}_{1,n},\hat{J}_n))\}$, where $\hat{X}_{2,n}$ is the level and $(\hat{X}_{1,n},\hat{J}_n)$ is the phase, and the third 
\[
\{\hat{\bY}_n^{(1,1)}\}=\{(\min\{\hat{X}_{1,n},\hat{X}_{2,n}\},(\hat{X}_{1,n}-\hat{X}_{2,n},\hat{J}_n))\}, 
\]
where $\min\{\hat{X}_{1,n},\hat{X}_{2,n}\}$ is the level and $(\hat{X}_{1,n}-\hat{X}_{2,n},\hat{J}_n)$ is the phase. 
The state spaces of $\{\hat{\bY}_n^{(1)}\}$ and $\{\hat{\bY}_n^{(2)}\}$ are given by $\mathbb{S}_+$ and that of $\{\hat{\bY}_n^{(1,1)}\}$ by $\mathbb{Z}_+\times\mathbb{Z}\times S_0$.
On the state space $\mathbb{S}_+$, the $k$-th level set of $\{\hat{\bY}_n^{(1,1)}\}$ is given by
\[
\mathbb{L}_k^{(1,1)} = \left\{ (x_1,x_2,j)\in\mathbb{S}_+; \min\{x_1,x_2\}=k \right\}, 
\]
and the level sets satisfy, for $k\ge 0$, $\mathbb{L}_{k+1}=\{(x_1+1,x_2+1,j); (x_1,x_2,j)\in\mathbb{L}_k\}$. It can, therefore, be said that $\{\hat{\bY}_n^{(1,1)}\}$ is a QBD process with level direction vector $(1,1)$. Note that the QBD process $\{\hat{\bY}_n^{(1,1)} \}$ is constructed according to Example 4.2 of \cite{Miyazawa12}.
For $\alpha\in\{(1),(2),(1,1)\}$, the transition probability matrix of $\{\hat{Y}^{\alpha}_n\}$ is given in block tri-diagonal form as
\begin{equation} \label{eq:blockformP}
P^{\alpha} = 
\begin{pmatrix}
A^{\alpha}_0 & A^{\alpha}_1 & & & \cr
A^{\alpha}_{-1} & A^{\alpha}_0 & A^{\alpha}_1 & & \cr
& A^{\alpha}_{-1} & A^{\alpha}_0 & A^{\alpha}_1 & \cr
& & \ddots & \ddots & \ddots 
\end{pmatrix},
\end{equation}
where for $i\in\{-1,0,1\}$, 
\[
A^{(1)}_i = 
\begin{pmatrix}
A_{i,0} & A_{i,1} & & & \cr
A_{i,-1} & A_{i,0} & A_{i,1} & & \cr
& A_{i,-1} & A_{i,0} & A_{i,1}  & \cr
& & \ddots & \ddots & \ddots 
\end{pmatrix},\quad
A^{(2)}_i = 
\begin{pmatrix}
A_{0,i} & A_{1,i} & & & \cr
A_{-1,i} & A_{0,i} & A_{1,i} & & \cr
& A_{-1,i} & A_{0,i} & A_{1,i}  & \cr
& & \ddots & \ddots & \ddots 
\end{pmatrix},
\]
and 
\begin{align*}
& A^{(1,1)}_{-1} =
\begin{pmatrix}
\ddots & \ddots & \ddots & & & & & & \cr
& A_{-1,1} & A_{-1,0} & A_{-1,-1} & & & & & \cr
& & A_{-1,1} & A_{-1,0} & A_{-1,-1} & A_{0,-1} & A_{1,-1} & & \cr
& & & & & A_{-1,-1} & A_{0,-1} & A_{1,-1} & \cr
& & & & & & \ddots & \ddots & \ddots
\end{pmatrix}, \\
& A^{(1,1)}_{0} =
\begin{pmatrix}
\ddots & \ddots & \ddots & & & & & & \cr
& A_{0,1} & A_{0,0} & A_{0,-1} & & & & & \cr
& & A_{0,1} & A_{0,0} & A_{0,-1} & A_{1,-1} & & & \cr
& & & A_{0,1} & A_{0,0} & A_{1,0} & & & \cr
& & & A_{-1,1} & A_{-1,0} & A_{0,0} & A_{1,0} & & \cr
& & & & & A_{-1,0} & A_{0,0} & A_{1,0} & \cr
& & & & & & \ddots & \ddots & \ddots
\end{pmatrix}, \\
& A^{(1,1)}_{1} =
\begin{pmatrix}
\ddots & \ddots & \ddots & & & \cr
& A_{1,1} & A_{1,0} & A_{1,-1} & & \cr
& & A_{1,1} & A_{1,0} & & \cr
& & & A_{1,1} & & & \cr
& & & A_{0,1} & A_{1,1} & & \cr
& & & A_{-1,1} & A_{0,1} & A_{1,1} & \cr
& & & & \ddots & \ddots & \ddots
\end{pmatrix}.
\end{align*}
%

%%%%%%%%%%%%
%
For $\alpha\in\{(1),(2),(1,1)\}$, let $R^\alpha$ be the rate matrix generated from the triplet $\{A^\alpha_{-1},A^\alpha_0,A^\alpha_1\}$, which is the minimal nonnegative solution to the matrix quadratic equation:
\begin{equation}
R^\alpha = (R^\alpha)^2 A^\alpha_{-1} + R^\alpha A^\alpha_{0} + A^\alpha_{1}.
\end{equation}
We give $\cp(R^\alpha)$, the convergence parameter of $R^\alpha$.
For $\theta\in\mathbb{R}$, define a matrix function $A^\alpha_*(\theta)$ as
\begin{equation}
A^\alpha_*(\theta) = e^{-\theta} A^\alpha_{-1} + A^\alpha_0 + e^{\theta} A^\alpha_1.  
\end{equation}
Since $P_+$ is irreducible and the number of positive elements of each row and column of $A^\alpha_*(0)$ is finite, we have, by Lemma 2.5 of \cite{Ozawa19}, 
\begin{equation}
\log \cp(R^\alpha) = \sup\{\theta\in\mathbb{R}; \cp(A^\alpha_*(\theta))^{-1}< 1 \}.
\label{eq:cpR_1}
\end{equation}
For $\theta_1,\theta_2\in\mathbb{R}$ and for $i,j\in\{-1,0,1\}$, define matrix functions $A_{*,j}(\theta_1)$ and $A_{i,*}(\theta_2)$ as 
\begin{align*}
& A_{*,j}(\theta_1) = \sum_{k\in\{-1,0,1\}} e^{k \theta_1} A_{k,j},\quad 
A_{i,*}(\theta_2) = \sum_{k\in\{-1,0,1\}} e^{k \theta_2} A_{i,k}. 
%&A_{*,*}(\theta_1,\theta_2) = \sum_{k,l\in\{-1,0,1\}} e^{k \theta_1+l \theta_2} A_{k,l}, 
\end{align*}
The matrix function $A_{*,*}(\theta_1,\theta_2)$ has been already defined in Sect.\ \ref{sec:intro}. 
%
%and a point set $\Gamma$ as 
%\[
%\Gamma = \{(\theta_1,\theta_2)\in\mathbb{R}^2; \cp(A_{*,*}(\theta_1,\theta_2))^{-1} < 1\}, 
%\]
%where $\cp(A_{*,*}(\theta_1,\theta_2))^{-1}=\spr(A_{*,*}(\theta_1,\theta_2))$ since $A_{*,*}(\theta_1,\theta_2)$ is a nonnegative matrix with a finite dimension. By Proposition C.1 of \cite{Ozawa19}, $\Gamma$ is bounded. 
%
%Let $\bar{\Gamma}$ be the closure of $\Gamma$. By Lemma A.1 of \cite{Ozawa19}, $\cp(A_{*,*}(\theta_1,\theta_2))^{-1}$ is a log-convex function in $(\theta_1,\theta_2)$ and hence, $\bar{\Gamma}$ is a closed convex set. 
%
Note that the point set $\bar{\Gamma}$ is given as $\bar{\Gamma} = \{(\theta_1,\theta_2)\in\mathbb{R}^2; \cp(A_{*,*}(\theta_1,\theta_2))^{-1} \le 1\}$ and it is a closed convex set. 
For $\alpha\in\{(1),(2),(1,1)\}$, we define three points on the boundary of $\bar{\Gamma}$ as
\begin{align*}
& \bar{\btheta}^{(1)} = (\bar{\theta}_1^{(1)},\bar{\theta}_2^{(1)}) = \arg\max_{(\theta_1,\theta_2) \in\bar{\Gamma}} \theta_1,\quad 
\bar{\btheta}^{(2)} = (\bar{\theta}_1^{(2)},\bar{\theta}_2^{(2)}) = \arg\max_{(\theta_1,\theta_2) \in\bar{\Gamma}} \theta_2, \\
& \bar{\btheta}^{(1,1)} = (\bar{\theta}_1^{(1,1)},\bar{\theta}_2^{(1,1)}) = \arg\max_{(\theta_1,\theta_2) \in\bar{\Gamma}} \theta_1+\theta_2. 
\end{align*}
The matrix function $A^{(1)}_*(\theta_1)$ and $A^{(2)}_*(\theta_2)$ are given in block tri-diagonal form as
\begin{align*}
A^{(1)}_* (\theta_1) = 
\begin{pmatrix}
A_{*,0}(\theta_1) & A_{*,1}(\theta_1) & & & \cr
A_{*,-1}(\theta_1) & A_{*,0}(\theta_1) & A_{*,1}(\theta_1) & & \cr
&A_{*,-1}(\theta_1) & A_{*,0}(\theta_1) & A_{*,1}(\theta_1)  & \cr
& & \ddots & \ddots & \ddots 
\end{pmatrix},\\
A^{(2)}_*(\theta_2) = 
\begin{pmatrix}
A_{0,*}(\theta_2) & A_{1,*}(\theta_2) & & & \cr
A_{-1,*}(\theta_2) & A_{0,*}(\theta_2) & A_{1,*}(\theta_2) & & \cr
& A_{-1,*}(\theta_2) & A_{0,*}(\theta_2) & A_{1,*}(\theta_2)  & \cr
& & \ddots & \ddots & \ddots 
\end{pmatrix}.
\end{align*}
Since $A^{(1)}_* (\theta_1)$ and $A^{(2)}_*(\theta_2)$ are irreducible, we obtain, by Lemma 2.6 of \cite{Ozawa19}, 
\[
\cp(A^{(1)}_* (\theta_1))=\sup_{\theta_2\in\mathbb{R}} \cp(A_{*,*}(\theta_1,\theta_2)),\quad  
\cp(A^{(2)}_*(\theta_2))=\sup_{\theta_1\in\mathbb{R}} \cp(A_{*,*}(\theta_1,\theta_2)),
\]
and this with \eqref{eq:cpR_1} leads us to the following proposition. 
\begin{proposition}
\begin{equation}
\log \cp(R^{(1)}) = \max_{(\theta_1,\theta_2)\in\bar{\Gamma}} \theta_1 = \bar{\theta}_1^{(1)},\quad 
\log \cp(R^{(2)}) = \max_{(\theta_1,\theta_2)\in\bar{\Gamma}} \theta_2 = \bar{\theta}_2^{(2)}. 
\end{equation}
\end{proposition}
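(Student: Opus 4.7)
The plan is to chain together the two identities displayed just before the statement: formula \eqref{eq:cpR_1}, which expresses $\log \cp(R^\alpha)$ in terms of $\cp(A^\alpha_*(\theta))$, and the identity $\cp(A^{(1)}_*(\theta_1)) = \sup_{\theta_2 \in \mathbb{R}} \cp(A_{*,*}(\theta_1,\theta_2))$ obtained from Lemma 2.6 of \cite{Ozawa19}. Since $A_{*,*}(\theta_1,\theta_2)$ is a finite, irreducible, nonnegative matrix, its convergence parameter is the reciprocal of its spectral radius, so $\cp(A_{*,*}(\theta_1,\theta_2))^{-1} = \chi(\theta_1,\theta_2)$. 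Taking reciprocals turns the supremum into an infimum, giving
\[
\cp(A^{(1)}_*(\theta_1))^{-1} = \inf_{\theta_2 \in \mathbb{R}} \chi(\theta_1,\theta_2),
\]
and substituting this into \eqref{eq:cpR_1} for $\alpha = (1)$ yields
\[
\log \cp(R^{(1)}) = \sup\{\theta_1 \in \mathbb{R} : (\theta_1,\theta_2) \in \Gamma \mbox{ for some } \theta_2 \in \mathbb{R}\}.
\]

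Next I would verify that this supremum equals $\bar{\theta}_1^{(1)} = \max_{(\theta_1,\theta_2) \in \bar{\Gamma}} \theta_1$; the maximum is attained because $\bar{\Gamma}$ is compact, by continuity of $\chi$ and Proposition \ref{pr:barGamma_bounded}. The direction $\le$ is immediate from $\Gamma \subset \bar{\Gamma}$. For the reverse inequality, I would use convexity of $\chi$ (Proposition \ref{pr:chiconvex}) together with the existence, under Assumption \ref{as:finiteness_tildeQ}, of a point $(\theta_1^\circ,\theta_2^\circ)$ with $\chi(\theta_1^\circ,\theta_2^\circ) < 1$. The convex combination $(1-\lambda)(\bar{\theta}_1^{(1)},\bar{\theta}_2^{(1)}) + \lambda(\theta_1^\circ,\theta_2^\circ)$ then satisfies $\chi \le (1-\lambda) + \lambda\, \chi(\theta_1^\circ,\theta_2^\circ) < 1$, so lies in $\Gamma$, and its first coordinate tends to $\bar{\theta}_1^{(1)}$ as $\lambda \downarrow 0$, yielding the required inequality.

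The formula for $R^{(2)}$ follows by an identical argument with the roles of $\theta_1$ and $\theta_2$ interchanged, starting from $\cp(A^{(2)}_*(\theta_2)) = \sup_{\theta_1 \in \mathbb{R}} \cp(A_{*,*}(\theta_1,\theta_2))$. The only nontrivial ingredient is the existence of a point with $\chi < 1$; since $\chi(0,0) = 1$ with gradient $(a_1,a_2)$ at the origin and at least one of $a_1,a_2$ is negative by Assumption \ref{as:finiteness_tildeQ}, a small step in the appropriate coordinate direction from the origin produces such a point. Apart from this observation, the proof is a direct chaining of the previously established identities, and the main obstacle is really just the bookkeeping in the reciprocal-swap and the justification that $\Gamma$ is nonempty.
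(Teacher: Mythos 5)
Your proposal is correct and follows essentially the same route as the paper: combine equation \eqref{eq:cpR_1} with the identity $\cp(A^{(1)}_*(\theta_1))=\sup_{\theta_2}\cp(A_{*,*}(\theta_1,\theta_2))$ from Lemma 2.6 of \cite{Ozawa19} and take reciprocals. The only difference is that you spell out the final step (that the supremum of $\theta_1$ over the open set $\Gamma$ equals the maximum over its closure $\bar{\Gamma}$, justified via convexity of $\chi$ and nonemptiness of $\Gamma$), which the paper treats as immediate.
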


For $R^{(1,1)}$, we give an upper bound of $\cp(R^{(1,1)})$. 
$A_*^{(1,1)}(\theta)$ is given in block quintuple-diagonal form as 
{\small 
\begin{align*}
& A_*^{(1,1)}(\theta)\cr
& = \begin{pmatrix}
\ddots & \ddots & \ddots & \ddots & \ddots & & & & \cr
%-2
\bar{A}_{*,-2}^{(1,1)}(\theta) & \bar{A}_{*,-1}^{(1,1)}(\theta) & \bar{A}_{*,0}^{(1,1)}(\theta) & \bar{A}_{*,1}^{(1,1)}(\theta) & \bar{A}_{*,2}^{(1,1)}(\theta) & & & & \cr
%-1
& \bar{A}_{*,-2}^{(1,1)}(\theta) & \bar{A}_{*,-1}^{(1,1)}(\theta) & \bar{A}_{*,0}^{(1,1)}(\theta) & \bar{A}_{*,1}^{(1,1)}(\theta) & A_{1,-1} & & & \cr
%0
& & \bar{A}_{*,-2}^{(1,1)}(\theta) & \bar{A}_{*,-1}^{(1,1)}(\theta) & A_{*,0}^{(1,1)}(\theta) & A_{*,1}^{(1,1)}(\theta) & A_{*,2}^{(1,1)}(\theta) & & \cr
%1
& & & A_{-1,1} & A_{*,-1}^{(1,1)}(\theta) & A_{*,0}^{(1,1)}(\theta) & A_{*,1}^{(1,1)}(\theta) & A_{*,2}^{(1,1)}(\theta) & \cr
%2
& & & & A_{*,-2}^{(1,1)}(\theta) & A_{*,-1}^{(1,1)}(\theta) & A_{*,0}^{(1,1)}(\theta) & A_{*,1}^{(1,1)}(\theta) & A_{*,2}^{(1,1)}(\theta) \cr
& & & & \ddots & \ddots & \ddots & \ddots & \ddots
\end{pmatrix}, 
\end{align*}
}
where
\begin{align*}
&A_{*,-2}^{(1,1)}(\theta) = e^{\theta} A_{-1,1},\quad
A_{*,-1}^{(1,1)}(\theta) = A_{-1,0}+e^{\theta} A_{0,1},\quad 
A_{*,0}^{(1,1)}(\theta) = e^{-\theta} A_{-1,-1}+A_{0,0}+e^{\theta} A_{1,1}, \\
&A_{*,1}^{(1,1)}(\theta) = e^{-\theta} A_{0,-1}+A_{1,0},\quad 
A_{*,2}^{(1,1)} = e^{-\theta} A_{1,-1}, 
\end{align*}
and
\begin{align*}
&\bar{A}_{*,-2}^{(1,1)}(\theta) = e^{-2\theta} A_{*,-2}^{(1,1)}(\theta),\quad
\bar{A}_{*,-1}^{(1,1)}(\theta) = e^{-\theta} A_{*,-1}^{(1,1)}(\theta),\quad 
\bar{A}_{*,0}^{(1,1)}(\theta) = A_{*,0}^{(1,1)}(\theta), \\
&\bar{A}_{*,1}^{(1,1)}(\theta) = e^{\theta} A_{*,1}^{(1,1)}(\theta),\quad 
\bar{A}_{*,2}^{(1,1)} = e^{2\theta} A_{*,2}^{(1,1)}.
\end{align*}
For $\theta_1,\theta_2\in\mathbb{R}$, define a matrix function $A_{*,*}^{(1,1)}(\theta_1,\theta_2)$ as
\[
A_{*,*}^{(1,1)}(\theta_1,\theta_2)
= \sum_{j=-2}^2 e^{j\theta_2} A_{*,j}^{(1,1)}(\theta_1),  
\]
and consider a partial matrix of $A_*^{(1,1)}(\theta_1)$, denoted by $Q_*^{(1,1)}(\theta_1)$, given as
\[
Q_*^{(1,1)}(\theta_1) 
= \begin{pmatrix}
%0
A_{*,0}^{(1,1)}(\theta_1) & A_{*,1}^{(1,1)}(\theta_1) & A_{*,2}^{(1,1)} & & & & \cr
%1
A_{*,-1}^{(1,1)}(\theta_1) & A_{*,0}^{(1,1)}(\theta_1) & A_{*,1}^{(1,1)}(\theta_1) & A_{*,2}^{(1,1)} & & & \cr
%2
A_{*,-2}^{(1,1)}(\theta_1) & A_{*,-1}^{(1,1)}(\theta_1) & A_{*,0}^{(1,1)}(\theta_1) & A_{*,1}^{(1,1)}(\theta_1) & A_{*,2}^{(1,1)} & & \cr
%3
& A_{*,-2}^{(1,1)}(\theta_1) & A_{*,-1}^{(1,1)}(\theta_1) & A_{*,0}^{(1,1)}(\theta_1) & A_{*,1}^{(1,1)}(\theta_1) & A_{*,2}^{(1,1)}(\theta_1) & \cr
& & \ddots & \ddots & \ddots & \ddots & \ddots
\end{pmatrix}, 
\]
which satisfies $\cp(A_*^{(1,1)}(\theta_1))\le \cp(Q_*^{(1,1)}(\theta_1))$. 
Since $Q_*^{(1,1)}(\theta_1)$ is a block quintuple-diagonal matrix, we have, by Remark 2.5 of \cite{Ozawa19}, $\cp(Q_*^{(1,1)}(\theta_1)) = \sup_{\theta_2\in\mathbb{R}} \cp(A_{*,*}^{(1,1)}(\theta_1,\theta_2))$ and this implies 
\begin{equation}
\log \cp(R^{(1,1)}) \le \sup\{\theta_1\in\mathbb{R}; \cp(A_{*,*}^{(1,1)}(\theta_1,\theta_2))^{-1} < 1\ \mbox{for some}\ \theta_2\in\mathbb{R} \}.
\end{equation}
On the other hand, we have
\begin{align}
&A_{*,*}^{(1,1)}(\theta_1+\theta_2,\theta_1) \cr
&\quad = e^{-2\theta_1} e^{\theta_1+\theta_2} A_{-1,1}
+ e^{-\theta_1} (A_{-1,0}+e^{\theta_1+\theta_2} A_{0,1}) 
+ (e^{-(\theta_1+\theta_2)} A_{-1,-1}+A_{0,0}+e^{\theta_1+\theta_2} A_{1,1}) \cr
&\qquad\quad + e^{\theta_1} (e^{-(\theta_1+\theta_2)} A_{0,-1}+A_{1,0})
+ e^{2\theta_1} e^{-(\theta_1+\theta_2)} A_{1,-1} \cr
&\quad = A_{*,*}(\theta_1,\theta_2), 
\end{align}
and this implies
\begin{align}
&\{\theta_1\in\mathbb{R}; \cp(A_{*,*}^{(1,1)}(\theta_1,\theta_2))^{-1} < 1\ \mbox{for some}\ \theta_2\in\mathbb{R} \} \cr
&\quad =\{\theta_1+\theta_2\in\mathbb{R}; \cp(A_{*,*}^{(1,1)}(\theta_1+\theta_2,\theta_1))^{-1} < 1 \}\cr
&\quad =\{\theta_1+\theta_2\in\mathbb{R}; \cp(A_{*,*}(\theta_1,\theta_2))^{-1} < 1 \}.
\end{align}
Hence, we obtain the following proposition.
\begin{proposition} \label{pr:cpR11_upper}
\begin{equation}
\log\cp(R^{(1,1)}) \le \max_{(\theta_1,\theta_2)\in\bar{\Gamma}} \theta_1+\theta_2 = \bar{\theta}_1^{(1,1)} + \bar{\theta}_2^{(1,1)}. 
\label{eq:cpR11}
\end{equation}
\end{proposition}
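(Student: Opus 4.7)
My strategy is to chain together the ingredients already assembled in the paragraphs preceding the statement, turning them into a sequence of inequalities whose right-hand side is a supremum over $\bar{\Gamma}$. The first step is to invoke \eqref{eq:cpR_1} specialised to $\alpha=(1,1)$, giving
\[
\log\cp(R^{(1,1)}) = \sup\{\theta_1\in\mathbb{R}: \cp(A^{(1,1)}_*(\theta_1))^{-1}<1\},
\]
so the proposition reduces to a lower bound on $\cp(A^{(1,1)}_*(\theta_1))$ for the relevant range of $\theta_1$.

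The second step is to replace the mildly inhomogeneous matrix $A^{(1,1)}_*(\theta_1)$ by its genuinely constant block quintuple-diagonal partial matrix $Q^{(1,1)}_*(\theta_1)$. Because $Q^{(1,1)}_*(\theta_1)$ arises as a principal submatrix of $A^{(1,1)}_*(\theta_1)$ obtained by discarding the ``transitional'' row and column blocks in the middle (where the $\bar{A}^{(1,1)}_{*,j}(\theta)$-type entries meet the $A^{(1,1)}_{*,j}(\theta)$-type entries), the standard monotonicity of the spectral radius of a nonnegative matrix under passage to a principal submatrix yields $\cp(A^{(1,1)}_*(\theta_1))\le\cp(Q^{(1,1)}_*(\theta_1))$. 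Since $Q^{(1,1)}_*(\theta_1)$ is now an honest constant-coefficient block banded matrix, Remark~2.5 of \cite{Ozawa19} (the block quintuple-diagonal analogue of the result used for $A^{(1)}_*$ and $A^{(2)}_*$) gives $\cp(Q^{(1,1)}_*(\theta_1)) = \sup_{\theta_2\in\mathbb{R}}\cp(A^{(1,1)}_{*,*}(\theta_1,\theta_2))$.

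The third step is the change of variables exploiting the identity $A^{(1,1)}_{*,*}(\theta_1+\theta_2,\theta_1)=A_{*,*}(\theta_1,\theta_2)$ verified by direct expansion in the excerpt. Under the substitution $(\eta_1,\eta_2)=(\theta_1+\theta_2,\theta_1)$ the set $\{(\eta_1,\eta_2): \cp(A^{(1,1)}_{*,*}(\eta_1,\eta_2))^{-1}<1\}$ is in bijection with $\Gamma$ and the coordinate $\eta_1$ equals $\theta_1+\theta_2$. Combining the three steps then yields
\[
\log\cp(R^{(1,1)}) \le \sup_{(\theta_1,\theta_2)\in\Gamma}(\theta_1+\theta_2) = \max_{(\theta_1,\theta_2)\in\bar{\Gamma}}(\theta_1+\theta_2) = \bar{\theta}_1^{(1,1)}+\bar{\theta}_2^{(1,1)},
\]
where the middle equality uses that $\bar{\Gamma}$ is the closure of the open sublevel set $\Gamma$ of the continuous (indeed log-convex) function $\chi$ and is compact by Proposition~\ref{pr:barGamma_bounded}, so the supremum over $\Gamma$ is attained on $\bar{\Gamma}$.

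I expect the main obstacle to be the justification of the submatrix inequality in the second step in this countably infinite block setting: one has to verify that the convergence parameter of a nonnegative infinite block matrix is indeed monotone under removal of principal block rows and columns, and that the removed region coincides exactly with the part where $A^{(1,1)}_*(\theta_1)$ differs from the clean quintuple-diagonal pattern of $Q^{(1,1)}_*(\theta_1)$. Once these points are settled, the remaining steps are bookkeeping in the identities already worked out in the preceding paragraphs.
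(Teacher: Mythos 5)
Your proof is correct and follows the same route as the paper: \eqref{eq:cpR_1} with $\alpha=(1,1)$, then $\cp(A_*^{(1,1)}(\theta_1))\le\cp(Q_*^{(1,1)}(\theta_1))$ by submatrix monotonicity, Remark~2.5 of \cite{Ozawa19} applied to the block quintuple-diagonal matrix $Q_*^{(1,1)}(\theta_1)$, and finally the change of variables via $A_{*,*}^{(1,1)}(\theta_1+\theta_2,\theta_1)=A_{*,*}(\theta_1,\theta_2)$. One small descriptive slip worth fixing: $Q_*^{(1,1)}(\theta_1)$ is not obtained by deleting only the few ``transitional'' block rows and columns in the middle of $A_*^{(1,1)}(\theta_1)$ (doing that would leave the barred $\bar{A}_{*,j}^{(1,1)}$ half together with the unbarred half as a direct sum); rather it is the one-sided principal submatrix on those block indices where every entry already has the unbarred constant form $A_{*,j}^{(1,1)}(\theta_1)$, and this is what makes the monotonicity of $\cp$ under principal submatrices directly applicable.
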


%\begin{remark}
%Inequality (\ref{eq:cpR11}) probably holds in equality. 
%\end{remark}

%%%%%%%%%%%%%%%%%%%%%%%%%%%%%%%%%%%%%%%
%
%  Section 4
%
%%%%%%%%%%%%%%%%%%%%%%%%%%%%%%%%%%%%%%%
\section{Convergence domain of the moment generating functions}  \label{sec:convergence_domain}

In this section, we prove that, for any $\bx\in\mathbb{Z}_+^2$,  the convergence domain of the matrix moment generating function $\Phi_{\bx}(\theta_1,\theta_2)$ includes the point set $\Gamma$. 
For the purpose, we introduce generating functions of the occupation measures since, in analysis of convergence domain, they are more convenient than the moment generating functions.

%%%%%%%%%%%%%%%%%%%%%%%%
%
%
\subsection{Matrix generating functions of the occupation measures}

Recall that $\tilde{P}_+$ is the fundamental matrix of the substochastic matrix $P_+$ and each row of $\tilde{P}_+$ is an occupation measure. Furthermore, $\tilde{P}_+$ is represented in block form as $\tilde{P}_+=(N_{\bx}; \bx\in\mathbb{Z}_+^2)=(N_{\bx,\bx'};\bx,\bx'\in\mathbb{Z}_+^2)$, and $N_{\bx}$ and $N_{\bx,\bx'}$ are given as $N_{\bx}=(N_{\bx,\bx'};\bx'\in\mathbb{Z}_+^2)$ and $N_{\bx,\bx'}=(\tilde{q}_{(\bx,j),(\bx',j')}; j,j'\in S_0)$, respectively. 

For $\bx\in\mathbb{Z}_+^2$, let $\hat{\Phi}_{\bx}(z_1,z_2)$ be the matrix generating function of the occupation measures defined as 
\[
\hat{\Phi}_{\bx}(z_1,z_2)=\sum_{k_1=0}^\infty \sum_{k_2=0}^\infty z_1^{k_1} z_2^{k_2} N_{\bx,(k_1,k_2)}.
\]
%We introduce a kind of tensor representation for $N_{\bx}$ and $P_+$. Let $\be_i$ be the $|\mathbb{Z}_+|\times 1$ vector whose $i$th element is $1$ and whose other elements are all $0$ (i.e., $i$th unit vector), and $\otimes$ the Kronecker product operation \cite{Bellman97}. Then, $N_{\bx}$ is represented as 
%
%\begin{align}
%N_{\bx} 
%&= \sum_{k=0}^\infty \sum_{l=0}^\infty \be_k^\top \otimes \be_l^\top \otimes N_{\bx,(k,l)} \nonumber \\
%&= \be_0^\top \otimes \be_0^\top \otimes N_{\bx,(0,0)} + \sum_{k=1}^\infty \be_k^\top \otimes \be_0^\top \otimes N_{\bx,(k,0)} \cr 
%&\qquad + \sum_{l=1}^\infty \be_0^\top \otimes \be_l^\top \otimes N_{\bx,(0,l)} + \sum_{k=1}^\infty \sum_{l=1}^\infty \be_k^\top \otimes \be_l^\top \otimes N_{\bx,(k,l)}, 
%\end{align}
%
%where the superscript $\top$ indicates the transpose. Using this representation, we obtain the following representation of the generating function of $N_{\bx}$: 
In terms of $\hat{\Phi}_{\bx}(z_1,z_2)$, the matrix moment generating function $\Phi_{\bx}(\theta_1,\theta_2)$ defined in Sect.\ \ref{sec:intro} is given as $\Phi_{\bx}(\theta_1,\theta_2)=\hat{\Phi}_{\bx}(e^{\theta_1},e^{\theta_2})$. The matrix generating function $\hat{\Phi}_{\bx}(z_1,z_2)$ satisfies 
\begin{align}
\hat{\Phi}_{\bx}(z_1,z_2) 
%&= N_{\bx} \left( \sum_{k=0}^\infty \sum_{l=0}^\infty (\be_k \otimes \be_l \otimes I) z_1^k z_2^l \right) \cr
&= N_{\bx,(0,0)} + \hat{\Phi}_{\bx}^{(1)}(z_1) + \hat{\Phi}_{\bx}^{(2)}(z_2) + \hat{\Phi}_{\bx}^{(+)}(z_1,z_2), 
\label{eq:Phix_z1z2}
\end{align}
where 
\begin{align*}
&\hat{\Phi}_{\bx}^{(1)}(z_1) = \sum_{k_1=1}^\infty z_1^{k_1} N_{\bx,(k_1,0)},\quad 
\hat{\Phi}_{\bx}^{(2)}(z_2) = \sum_{k_2=1}^\infty z_2^{k_2} N_{\bx,(0,k_2)},\\
&\hat{\Phi}_{\bx}^{(+)}(z_1,z_2) = \sum_{k_1=1}^\infty \sum_{k_2=1}^\infty z_1^{k_1} z_2^{k_2} N_{\bx,(k_1,k_2)}. 
\end{align*}
%
%The substochastic matrix $P_+$ is represented as 
%
%\begin{align}
%P_+ 
%= &\sum_{i=0}^1 \sum_{j=0}^1 \be_0 \be_i^\top \otimes \be_0 \be_j^\top \otimes A_{i,j} 
%+ \sum_{k=1}^\infty \sum_{i=-1}^1 \sum_{j=0}^1 \be_k \be_{k+i}^\top \otimes \be_0 \be_j^\top \otimes A_{i,j} \nonumber \\
%&+ \sum_{l=1}^\infty \sum_{i=0}^1 \sum_{j=-1}^1 \be_0 \be_{i}^\top \otimes \be_l \be_{l+j}^\top \otimes A_{i,j}
%+ \sum_{k=1}^\infty \sum_{l=1}^\infty \sum_{i=-1}^1 \sum_{j=-1}^1 \be_k \be_{k+i}^\top \otimes \be_l \be_{l+j}^\top \otimes A_{i,j}.
%\end{align}
%
Define the following matrix functions:
\begin{align*}
&\hat{C}(z_1,z_2) = \sum_{i,j\in\{-1,0,1\}}  z_1^i z_2^j A_{i,j}, \quad 
\hat{C}_0(z_1,z_2) = \sum_{i,j\in\{0,1\}} z_1^i z_2^j A_{i,j}, \\
&\hat{C}_1(z_1,z_2) = \sum_{i\in\{-1,0,1\}} \sum_{j\in\{0,1\}} z_1^i z_2^j A_{i,j},  \quad
\hat{C}_2(z_1,z_2) = \sum_{i\in\{0,1\}} \sum_{j\in\{-1,0,1\}} z_1^i z_2^j A_{i,j},
\end{align*}
where $A_{*,*}(\theta_1,\theta_2)=\hat{C}(e^{\theta_1},e^{\theta_2})$. 
%
%Let $\hat{\chi}(z_1,z_2)$ be the Perron-Frobenius eigenvalue of $\hat{C}(z_1,z_2)$, i.e., $\chi(z_1,z_2)=\spr(C(z_1,z_2))$.
%
Under Assumption \ref{as:finiteness_tildeQ}, the summation of each row of $\tilde{P}_+$ is finite and we obtain $\tilde{P}_+ P_+<\infty$. This leads us to the following recursive formula for $\tilde{P}_+$:
\begin{equation}
\tilde{P}_+=I+\tilde{P}_+P_+. 
\label{eq:tQQ}
\end{equation}
Considering the block structure of $\tilde{P}_+$, we obtain, for $\bx\in\mathbb{Z}_+^2$, the recursive formula for $N_{\bx}$:
\begin{align}
&N_{\bx}=\big( 1(\bx'=\bx) I; \bx'\in\mathbb{Z}_+^2 \big) + N_{\bx} P_+,  
\label{eq:NxQ}
\end{align}
and this leads us to
\begin{align}
\hat{\Phi}_{\bx}(z_1,z_2) 
%&= \Bigl( \be_{x_1}^\top \otimes \be_{x_2}^\top \otimes I + N_{\bx} Q \Bigr) \left( \sum_{k=0}^\infty \sum_{l=0}^\infty (\be_k \otimes \be_l \otimes I) z_1^k z_2^l  \right) \nonumber \\
&= z_1^{x_1} z_2^{x_2} I + N_{\bx,(0,0)} \hat{C}_0(z_1,z_2) + \hat{\Phi}_{\bx}^{(1)}(z_1) \hat{C}_1(z_1,z_2) \cr
&\qquad\qquad\qquad + \hat{\Phi}_{\bx}^{(2)}(z_2) \hat{C}_2(z_1,z_2) + \hat{\Phi}_{\bx}^{(+)}(z_1,z_2) \hat{C}(z_1,z_2).
\label{eq:bvarphi}
\end{align}
Combining this equation with (\ref{eq:Phix_z1z2}), we obtain 
\begin{align}
&\hat{\Phi}_{\bx}^{(+)}(z_1,z_2) (I-\hat{C}(z_1,z_2)) + \hat{\Phi}_{\bx}^{(1)}(z_1) (I-\hat{C}_1(z_1,z_2)) \cr
&\qquad + \hat{\Phi}_{\bx}^{(2)}(z_2) (I-\hat{C}_2(z_1,z_2)) + N_{\bx,(0,0)} (I-\hat{C}_0(z_1,z_2)) - z_1^{x_1} z_2^{x_2} I = O. 
\label{eq:mgf_Phi}
\end{align}
This equation will become a clue for investigating the convergence domain.

%%%%%%%%%%%%%%%%%%%%%%%%%%%
%
%
\subsection{Radii of convergence of $\hat{\Phi}_{\bx}^{(1)}(z)$ and $\hat{\Phi}_{\bx}^{(2)}(z)$} \label{sec:Phix_rc}

For $\by=(\bx,j)=(x_1,x_2,j)\in\mathbb{S}_+$, $x_1',x_2'\in\mathbb{Z}_+$ and $j'\in S_0$, define generating functions $\hat{\varphi}_{\by,(x_2',j')}^{(1)}(z)$ and $\hat{\varphi}_{\by,(x_1',j')}^{(2)}(z)$ as 
\[
\hat{\varphi}_{\by,(x_2',j')}^{(1)}(z) = \sum_{k=0}^\infty \tilde{q}_{\by,(k,x_2',j')} z^k,\quad 
\hat{\varphi}_{\by,(x_1',j')}^{(2)}(z) = \sum_{k=0}^\infty \tilde{q}_{\by,(x_1',k,j')} z^k, 
\]
and denote by $r_{\by,(x_2',j')}^{(1)}$ and $r_{\by,(x_1',j')}^{(2)}$ the radii of convergence of them, respectively, i.e., 
\[
r_{\by,(x_2',j')}^{(1)} = \sup\{r\ge 0; \hat{\varphi}_{\by,(x_2',j')}^{(1)}(r)<\infty\},\quad 
r_{\by,(x_1',j')}^{(2)} = \sup\{r\ge 0; \hat{\varphi}_{\by,(x_1',j')}^{(2)}(r)<\infty\}. 
\]
We have 
\[
\hat{\Phi}_{\bx}^{(1)}(z) = \big(\hat{\varphi}_{(\bx,j),(0,j')}^{(1)}(z); j,j'\in S_0 \big),\quad 
\hat{\Phi}_{\bx}^{(2)}(z) = \big(\hat{\varphi}_{(\bx,j),(0,j')}^{(2)}(z); j,j'\in S_0 \big), 
\]
and hence, in order to know the radii of convergence of $\hat{\Phi}_{\bx}^{(1)}(z)$ and $\hat{\Phi}_{\bx}^{(2)}(z)$, it suffices to obtain $r_{(\bx,j),(0,j')}^{(1)}$ and $r_{(\bx,j),(0,j')}^{(2)}$ for $j,j'\in S_0$. 
For the purpose, we present a couple of  propositions. 
\begin{proposition} \label{pr:rc_varphi}
For every $\by,\by'\in\mathbb{S}_+$, $k\in\mathbb{Z}_+$ and $l\in S_0$, we have $r_{\by,(k,l)}^{(1)}=r_{\by',(k,l)}^{(1)}$ and $r_{\by,(k,l)}^{(2)}=r_{\by',(k,l)}^{(2)}$. 
\end{proposition}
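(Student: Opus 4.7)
The plan is to exploit the irreducibility of $P_+$ from Assumption \ref{as:Q_irreducible} to show that $\tilde{q}_{\by,\by''}$ and $\tilde{q}_{\by',\by''}$ are comparable, up to multiplicative constants that do not depend on $\by''$. Since each one-dimensional generating function is obtained by summing $z^{x_1'}\,\tilde{q}_{\by,(x_1',k,l)}$ (or $z^{x_2'}\,\tilde{q}_{\by,(k,x_2',l)}$) against nonnegative weights, a uniform two-sided comparison between the $\tilde{q}$'s transfers verbatim to the generating functions and forces identical radii of convergence.

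First I would use irreducibility to choose nonnegative integers $n_0, m_0$ with
\[
\alpha := [P_+^{n_0}]_{\by,\by'} > 0, \qquad \beta := [P_+^{m_0}]_{\by',\by} > 0.
\]
The Chapman--Kolmogorov decomposition gives the pointwise bound $[P_+^{n_0+m}]_{\by,\by''} \ge \alpha\,[P_+^m]_{\by',\by''}$ for every $\by''\in\mathbb{S}_+$ and every $m\ge 0$. Summing over $m\ge 0$ and using $\tilde{P}_+ = \sum_{n\ge 0} P_+^n$, together with the fact that the tail $\sum_{n\ge n_0}[P_+^n]_{\by,\by''}$ is bounded above by $\tilde{q}_{\by,\by''}$, yields
\[
\tilde{q}_{\by,\by''} \ \ge\ \alpha\, \tilde{q}_{\by',\by''}.
\]
Swapping the roles of $\by$ and $\by'$ symmetrically produces $\tilde{q}_{\by',\by''} \ge \beta\, \tilde{q}_{\by,\by''}$, so that $\alpha\,\tilde{q}_{\by',\by''} \le \tilde{q}_{\by,\by''} \le \beta^{-1}\tilde{q}_{\by',\by''}$ uniformly in $\by''$.

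Specializing $\by'' = (x_1',k,l)$, multiplying by $z^{x_1'}\ge 0$ and summing over $x_1'\ge 0$ then gives
\[
\alpha\, \hat{\varphi}_{\by',(k,l)}^{(1)}(z) \ \le\ \hat{\varphi}_{\by,(k,l)}^{(1)}(z) \ \le\ \beta^{-1}\, \hat{\varphi}_{\by',(k,l)}^{(1)}(z)
\]
for every $z\ge 0$, which immediately forces $r_{\by,(k,l)}^{(1)} = r_{\by',(k,l)}^{(1)}$. The same reasoning applied to $\by'' = (k,x_2',l)$ and the variable $z^{x_2'}$ yields the corresponding equality $r_{\by,(k,l)}^{(2)} = r_{\by',(k,l)}^{(2)}$.

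I do not expect a genuine obstacle; the only point requiring care is that, although $\alpha$ and $\beta$ depend on the pair $(\by,\by')$, they are independent of the target $\by''$, which is exactly what is needed for the comparison to survive the summation against $z^{x_1'}$ (or $z^{x_2'}$).
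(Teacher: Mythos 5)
Your proposal is correct and takes essentially the same route as the paper: both use irreducibility of $P_+$ to produce a constant $c>0$ (independent of the target state $\by''$) with $\tilde{q}_{\by,\by''}\ge c\,\tilde{q}_{\by',\by''}$, then symmetrize and transfer the two-sided comparison to the generating functions to equate the radii of convergence. The only difference is cosmetic: you phrase the comparison via the Chapman--Kolmogorov decomposition of $\tilde{P}_+=\sum_n P_+^n$, while the paper phrases it probabilistically by conditioning on $\{\bY_{n_0}=\by'\}$; the content is identical.
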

\begin{proof}
Recall that, for $\by, (k_1,k_2,l)\in\mathbb{S}_+$, $\tilde{q}_{\by,(k_1,k_2,l)}$ is given by 
\begin{align*}
\tilde{q}_{\by,(k_1,k_2,l)} 
= \mathbb{E}\bigg( \sum_{n=0}^\infty 1(\bY_n=(k_1,k_2,l))1(\tau>n)\,\Big|\,\bY_0=\by \bigg),
\end{align*}
where $\tau$ is the stopping time defined as $\tau=\inf\{n\ge 0: \bY_n\in\mathbb{S}\setminus\mathbb{S}_+\}$. 
For any $\by'\in\mathbb{S}_+$, since $P_+$ is irreducible, there exists $n_0\ge 0$ such that $\mathbb{P}(\bY_{n_0}=\by'\,|\,\bY_0=\by)>0$. Using this $n_0$, we obtain 
\begin{align}
\tilde{q}_{\by,(k_1,k_2,l)} 
&\ge \mathbb{E}\bigg( \sum_{n=n_0}^\infty 1(\bY_n=(k_1,k_2,l))1(\tau>n)\,\Big|\,\bY_{n_0}=\by' \bigg) \mathbb{P}(\bY_{n_0}=\by'\,|\,\bY_0=\by) \cr
&= \tilde{q}_{\by',(k_1,k_2,l)}\,\mathbb{P}(\bY_{n_0}=\by'\,|\,\bY_0=\by), 
\label{eq:inequality_tildeq}
\end{align}
and this implies that $r_{\by,(k_2,l)}^{(1)}\le r_{\by',(k_2,l)}^{(1)}$. 
Exchanging $\by$ with $\by'$, we also obtain $r_{\by',(k_2,l)}^{(1)}\le r_{\by,(k_2,l)}^{(1)}$, and this leads us to $r_{\by,(k_2,l)}^{(1)} = r_{\by',(k_2,l)}^{(1)}$.
The other equation $r_{\by,(k_1,l)}^{(2)} = r_{\by',(k_1,l)}^{(2)}$ can analogously be obtained. 
\end{proof}

Next, we consider the matrix generating functions in matrix geometric form corresponding to $\{\hat{\bY}_n^{(1)}\}$ and $\{\hat{\bY}_n^{(2)}\}$. For $k\in\mathbb{Z}_+$, define matrices $N^{(1)}_{0,k}$ and $N^{(2)}_{0,k}$ and matrix generating functions $\hat{\Phi}^{(1)}_0(z)$ and $\hat{\Phi}^{(2)}_0(z)$ as 
\begin{align*}
&N^{(1)}_{0,k} = \big( N_{(0,x_2),(k,x_2')}; x_2,x_2'\in\mathbb{Z}_+ \big),\quad 
N^{(2)}_{0,k} = \big( N_{(x_1,0),(x_1',k)}; x_1,x_1'\in\mathbb{Z}_+ \big),\\
&\hat{\Phi}^{(1)}_0(z) = \sum_{k=0}^\infty N^{(1)}_{0,k} z^k 
= \Big( \big( \hat{\varphi}_{(0,x_2,j),(x_2',j')}^{(1)}(z); j,j'\in S_0 \big); x_2,x_2'\in\mathbb{Z}_+ \Big),\\
&\hat{\Phi}^{(2)}_0(z) = \sum_{k=0}^\infty N^{(2)}_{0,k} z^k 
= \Big( \big( \hat{\varphi}_{(x_1,0,j),(x_1',j')}^{(2)}(z); j,j'\in S_0 \big); x_1,x_1'\in\mathbb{Z}_+ \Big).
\end{align*}
Further define $N^{(1)}_0$ and $N^{(2)}_0$ as 
\[
N^{(1)}_0 = \big( N^{(1)}_{0,k}; k\in\mathbb{Z}_+ \big),\quad 
N^{(2)}_0 = \big( N^{(2)}_{0,k}; k\in\mathbb{Z}_+ \big). 
\]
From (\ref{eq:tQQ}), we obtain, for $i\in\{1,2\}$,
\begin{equation}
\tilde{P}^{(i)} = I + \tilde{P}^{(i)} P^{(i)}, 
\end{equation}
where $\tilde{P}^{(i)}=\sum_{k=0}^\infty (P^{(i)})^k$ and $P^{(i)}$ is given by \eqref{eq:blockformP}. This leads us to, for $i\in\{1,2\}$, 
\begin{align}
N^{(i)}_0 = \begin{pmatrix} I & O & \cdots \end{pmatrix} + N^{(i)}_0 P^{(i)}, 
\end{align}
and we obtain, for $i\in\{1,2\}$, 
\begin{align}
&N_{0,0}^{(i)} = I + N_{0,0}^{(i)} A_0^{(i)} + N_{0,1}^{(i)} A_{-1}^{(i)},\cr
&N_{0,k}^{(i)} = N_{0,k-1}^{(i)} A_1^{(i)} + N_{0,k}^{(i)} A_0^{(i)} + N_{0,k+1}^{(i)} A_{-1}^{(i)},\ k\ge 1.  
\label{eq:N0n}
\end{align}
The solution to equation (\ref{eq:N0n}) is given as
\begin{align}
&N_{0,k}^{(i)} = N_{0,0}^{(i)} (R^{(i)})^k,\quad 
N_{0,0}^{(i)} = (I-A_0^{(i)}-R^{(i)}A_{-1}^{(i)})^{-1} = \sum_{k=0}^\infty (A_0^{(i)}+R^{(i)}A_{-1}^{(i)})^k, 
\label{eq:N0n_solution}
\end{align}
where we use the fact that $\cp\big(A_0^{(i)}+R^{(i)}A_{-1}^{(i)}\big)<1$ since $\tilde{P}^{(i)}$ is finite. 
From \eqref{eq:N0n_solution} and Fubini's theorem, we obtain 
\begin{align}
\hat{\Phi}_0^{(i)}(z) = \sum_{k=0}^\infty N_{0,0}^{(i)} (R^{(i)})^k z^k = N_{0,0}^{(i)} \sum_{k=0}^\infty (zR^{(i)})^k.
\label{eq:Phiz_R}
\end{align}
This leads us to the following proposition. 
\begin{proposition} \label{pr:some_r_cpR}
There exist some states $(0,x_2,j)$ and $(x_1',0,j')$ in $\mathbb{S}_+$ such that, for every $k\in\mathbb{Z}_+$ and $l\in S_0$, $r_{(0,x_2,j),(k,l)}^{(1)}=\cp(R^{(1)})=e^{\bar{\theta}_1^{(1)}}$ and $r_{(x_1',0,j'),(k,l)}^{(2)}=\cp(R^{(2)})=e^{\bar{\theta}_2^{(2)}}$. 
\end{proposition}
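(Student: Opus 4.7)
The approach is to exploit the matrix-geometric representation $\hat{\Phi}_0^{(i)}(z)=N_{0,0}^{(i)}\sum_{k\ge 0}(zR^{(i)})^k$ derived immediately above, and reduce the computation of each $r_{(0,x_2,j),(k,l)}^{(1)}$ to the convergence parameter of $R^{(1)}$. The opening remark is that, by Proposition \ref{pr:rc_varphi}, the radius $r^{(1)}_{\by,(k,l)}$ is independent of the starting state $\by$; hence, for any convenient state $(0,x_2,j)$ on the axis, it suffices to verify that the common radius (now a function of $(k,l)$ only) equals $\cp(R^{(1)})$. An entirely symmetric argument handles $R^{(2)}$, so I focus on the case $i=1$.

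For the upper bound $r_{(0,x_2,j),(k,l)}^{(1)}\le \cp(R^{(1)})$, I use that $N_{0,0}^{(1)}=\sum_{k\ge 0}(A_0^{(1)}+R^{(1)}A_{-1}^{(1)})^k\ge I$ entrywise, which gives the entrywise bound $\hat{\Phi}_0^{(1)}(z)\ge \sum_{k\ge 0} z^k (R^{(1)})^k$. Because $P_+$ is irreducible (Assumption \ref{as:Q_irreducible}), every entry of $N_{0,0}^{(1)}$ is strictly positive, so divergence of any single entry in column $(k,l)$ of $\sum_{k\ge 0} z^k (R^{(1)})^k$ forces divergence of every entry of the same column of $\hat{\Phi}_0^{(1)}(z)$. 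Combined with the definition of $\cp$, this yields the upper bound once one verifies that, for every $z>\cp(R^{(1)})$, every column of $\sum_{k\ge 0} z^k (R^{(1)})^k$ contains at least one divergent entry.

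For the lower bound $r_{(0,x_2,j),(k,l)}^{(1)}\ge \cp(R^{(1)})$, I need to show that the product $N_{0,0}^{(1)}(I-zR^{(1)})^{-1}$ is entrywise finite for every $z<\cp(R^{(1)})$. The factor $\sum_{k\ge 0} z^k (R^{(1)})^k$ is entrywise finite there by the definition of $\cp$, and each $N_{0,n}^{(1)}=N_{0,0}^{(1)}(R^{(1)})^n$ is a block of $\tilde{P}^{(1)}$, which is entrywise finite by Proposition \ref{pr:finiteness_tildeQ}. My plan is to write $\hat{\Phi}_0^{(1)}(z)=\sum_{n\ge 0}z^n N_{0,n}^{(1)}$ and bound the $n$-th coefficient using the growth rate $\cp(R^{(1)})^{-1}$ of the entries of $(R^{(1)})^n$, then close with a dominated-convergence / Fubini argument.

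The main obstacle I foresee lies in both bounds: propagating entrywise convergence or divergence through the product of two countably-infinite nonnegative matrices. Concretely, the upper bound requires a column-wise statement about $\sum_{k\ge 0} z^k (R^{(1)})^k$ that goes slightly beyond the definition of $\cp$, and the lower bound requires uniform enough growth control on entries of $(R^{(1)})^n$ to commute the double sum $\sum_{n\ge 0} z^n [N_{0,0}^{(1)}(R^{(1)})^n]_{(x_2,j),(x_2',j')}$. Both hinge on the positivity of $N_{0,0}^{(1)}$ and the structural irreducibility inherited from $P_+$ via Assumption \ref{as:Q_irreducible}.
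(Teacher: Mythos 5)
Your upper bound is essentially the paper's argument: by strict positivity of every entry of $N_{0,0}^{(1)}$ (a consequence of Assumption \ref{as:Q_irreducible}), divergence of any entry in a given column of $\sum_{k\ge 0}(zR^{(1)})^k$ propagates to the entire corresponding column of $\hat{\Phi}_0^{(1)}(z)=N_{0,0}^{(1)}\sum_{k\ge 0}(zR^{(1)})^k$. You also correctly flag the residual question of whether, for $z>\cp(R^{(1)})$, \emph{every} column of $\sum_{k\ge 0}(zR^{(1)})^k$ has a divergent entry; this is supplied in the paper's setting by Corollary 2.1 of the cited reference on block tri-diagonal matrices (an entrywise statement $\lim_k[(R^{(1)})^k]^{1/k}_{i,j}=\cp(R^{(1)})^{-1}$), so that gap is not a flaw of your argument but of its completeness.

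The real gap is in your lower bound. You propose to show $N_{0,0}^{(1)}\sum_{k\ge 0}(zR^{(1)})^k$ is entrywise finite for $z<\cp(R^{(1)})$ by estimating the entries of $(R^{(1)})^n$ and invoking Fubini or dominated convergence, but the middle index in the matrix product runs over the countably infinite phase space, and there is no entrywise bound on $N_{0,0}^{(1)}$ that would let you close that sum. The obstruction you yourself identify is genuine, and your plan does not resolve it. The paper sidesteps it with a different, sharper device: the identity $R^{(1)}=A_1^{(1)}N_{0,0}^{(1)}$, which yields
\[
z\,A_1^{(1)}\hat{\Phi}_0^{(1)}(z)=\sum_{k\ge 0}(zR^{(1)})^{k+1}\le \sum_{k\ge 0}(zR^{(1)})^{k}.
\]
Because $A_1^{(1)}$ is block tri-diagonal with finite blocks, each row of $A_1^{(1)}$ has only finitely many nonzero entries, so reading off a single positive entry $[A_1^{(1)}]_{(x_0,j_0),(x_2,j)}>0$ (which exists by irreducibility of $P_+$) gives a pointwise bound $[\hat{\Phi}_0^{(1)}(z)]_{(x_2,j),(k,l)}\le \mathrm{const}\cdot\bigl[\sum_{k\ge 0}(zR^{(1)})^{k}\bigr]_{(x_0,j_0),(k,l)}$, which is finite for $z<\cp(R^{(1)})$ by definition of $\cp$. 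This converts an infinite-sum problem on the right factor into a finite-row-sum problem on the left factor and avoids the issue entirely. Without this identity (or an equivalent structural device), your Fubini plan does not go through.
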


\begin{proof}%[Proof of Proposition \ref{pr:some_r_cpR}]
Define $R^{(1)}(z)$ as $R^{(1)}(z)=\sum_{k=0}^\infty (zR^{(1)})^k$. Recall that the $((x_2,j),(x_2',j'))$-element of $\hat{\Phi}_0^{(1)}(z)$ is $\hat{\varphi}_{(0,x_2,j),(x_2',j')}^{(1)}(z)$. 
Furthermore, $P_+$ is irreducible and every element of $N_{0,0}^{(1)}$ is positive. Hence, by \eqref{eq:Phiz_R}, we have 
\begin{equation}
\hat{\Phi}_0^{(1)}(z)<\infty\Rightarrow R^{(1)}(z)<\infty, 
\end{equation}
and this implies that, for every $x_2,x_2'\in\mathbb{Z}_+$ and every $j,j'\in S_0$, $r_{(0,x_2,j),(x_2',j')}^{(1)}\le\cp(R^{(1)})$. 
On the other hand, we have $R^{(1)}=A_1^{(1)} N_{0,0}^{(1)}$ and, by Fubini's theorem,  
\begin{align}
z A_1^{(1)} \hat{\Phi}_0^{(1)}(z) 
&= \sum_{k=0}^\infty z A_1^{(1)} N_{0,0}^{(1)} (zR^{(1)})^k 
= \sum_{k=0}^\infty (zR^{(1)})^{k+1} 
\le R^{(1)}(z). 
\end{align}
Since $A_1^{(1)}$ is a block tri-diagonal matrix and the size of each block is finite, the number of positive elements in each row of $A_1^{(1)}$ is finite. Since $P_+$ is irreducible, at least one element of $A_1^{(1)}$, say the $((x_0,j_0),(x_2,j))$-element, is positive. Hence, we have, for every $k\in\mathbb{Z}_+$ and $l\in S_0$, 
\begin{equation}
R^{(1)}(z)<\infty \Rightarrow \hat{\varphi}_{(0,x_2,j),(k,l)}^{(1)}(z)<\infty,  
\end{equation}
and this implies that $\cp(R^{(1)})\le r_{(0,x_2,j),(k,l)}^{(1)}$. As a result, we obtain, for some $x_2\in\mathbb{Z}_+$ and $j\in S_0$ and for every $k\in\mathbb{Z}_+$ and $l\in S_0$, $r_{(0,x_2,j),(k,l)}^{(1)}=\cp(R^{(1)})$. 

Analogously, we obtain, for some $x_1\in\mathbb{Z}_+$ and $j\in S_0$ and for every $k\in\mathbb{Z}_+$ and $l\in S_0$, $r_{(x_1,0,j),(k,l)}^{(2)}=\cp(R^{(2)})$. 
\end{proof}

Propositions \ref{pr:rc_varphi} and \ref{pr:some_r_cpR} lead us to the following lemma. 
\begin{lemma} \label{le:r_cpR}
For every $\by\in\mathbb{S}_+$ and for every $x_1',x_2'\in\mathbb{Z}_+$ and $j'\in S_0$, we have $r_{\by,(x_1',j')}^{(1)}=\cp(R^{(1)})=e^{\bar{\theta}_1^{(1)}}$ and $r_{\by,(x_2',j')}^{(2)}=\cp(R^{(2)})=e^{\bar{\theta}_2^{(2)}}$. Hence, for every $\bx\in\mathbb{Z}_+^2$, the radius of convergence of $\hat{\Phi}_{\bx}^{(1)}(z)$ is given by $e^{\bar{\theta}_1^{(1)}}$ and that of $\hat{\Phi}_{\bx}^{(2)}(z)$ by $e^{\bar{\theta}_2^{(2)}}$.
\end{lemma}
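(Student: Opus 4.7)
The plan is to obtain Lemma \ref{le:r_cpR} by directly combining the two preceding propositions, which together already handle the two ingredients: a ``transfer'' of radius of convergence among starting states, and an identification of the common value with $\cp(R^{(i)})$.

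First I would fix the superscript $(1)$ case; the $(2)$ case is entirely symmetric. Proposition \ref{pr:some_r_cpR} supplies at least one distinguished starting state $\by_0=(0,x_2,j)\in\mathbb{S}_+$ with the property that $r_{\by_0,(k,l)}^{(1)}=\cp(R^{(1)})$ for every $k\in\mathbb{Z}_+$ and $l\in S_0$. I would then invoke Proposition \ref{pr:rc_varphi} with $\by'=\by_0$: for any $\by\in\mathbb{S}_+$ and any $(k,l)\in\mathbb{Z}_+\times S_0$ we have $r_{\by,(k,l)}^{(1)}=r_{\by_0,(k,l)}^{(1)}$. Chaining the two identities and substituting $\cp(R^{(1)})=e^{\bar{\theta}_1^{(1)}}$ from the proposition preceding Proposition \ref{pr:cpR11_upper} yields the desired equality $r_{\by,(k,l)}^{(1)}=e^{\bar{\theta}_1^{(1)}}$ uniformly in $\by$, $k$, $l$.

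To pass from the scalar radii to the radius of convergence of the matrix generating function $\hat{\Phi}_{\bx}^{(1)}(z)$, I would observe that this $s_0\times s_0$ matrix-valued power series has, as its $(j,j')$-entry, the series $\sum_{k_1\ge 1}[N_{\bx,(k_1,0)}]_{j,j'}\,z^{k_1}$, which differs from $\hat{\varphi}_{(\bx,j),(0,j')}^{(1)}(z)$ only in the $k_1=0$ constant term and hence has the same radius of convergence $r_{(\bx,j),(0,j')}^{(1)}=e^{\bar{\theta}_1^{(1)}}$. Because the state space $S_0$ is finite, the radius of convergence of the matrix-valued series coincides with the (common) radius of convergence of its $s_0^2$ entries, so it equals $e^{\bar{\theta}_1^{(1)}}$. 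The same argument with indices swapped handles $\hat{\Phi}_{\bx}^{(2)}(z)$.

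I do not expect a real obstacle here; the content of the lemma is already packaged inside Propositions \ref{pr:rc_varphi} and \ref{pr:some_r_cpR}. The only point requiring a little care is that Proposition \ref{pr:some_r_cpR} asserts the equality $r_{\by_0,(k,l)}^{(1)}=\cp(R^{(1)})$ only for \emph{some} starting state $\by_0$, not for all; this is exactly why the symmetry statement of Proposition \ref{pr:rc_varphi}, which is free of any restriction on $\by$ and $\by'$, is essential to extend the equality to every $\by\in\mathbb{S}_+$.
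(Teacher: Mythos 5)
Your proposal is correct and matches the paper's intent: the paper states the lemma with only the remark that Propositions \ref{pr:rc_varphi} and \ref{pr:some_r_cpR} lead to it, and your chaining of the two propositions (transfer of the radius across starting states via Proposition \ref{pr:rc_varphi}, identification with $\cp(R^{(i)})$ via Proposition \ref{pr:some_r_cpR}), together with the finiteness of $S_0$ to pass from entrywise radii to the matrix radius, is exactly the argument being invoked.
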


%%%%%%%%%%%%%%%%%%%%%
%
%
\subsection{Radius of convergence of another matrix generating function}

In the previous subsection, we defined the matrix generating functions of the occupation measures in matrix geometric form corresponding to $\{\hat{\bY}_n^{(1)}\}$ and $\{\hat{\bY}_n^{(2)}\}$. In this subsection, we consider that corresponding to $\{\hat{\bY}_n^{(1,1)}\}$. 
For $k\in\mathbb{Z}_+$, define $N_{0,k}^{(1,1)}$ as
\begin{align*}
N_{0,k}^{(1,1)} 
&= \left( N_{(x_1,x_2),(x_1',x_2')}; \min\{x_1,x_2\}=0,\,x_1-x_2\in\mathbb{Z},\,\min\{x_1',x_2'\}=k,\,x_1'-x_2'\in\mathbb{Z} \right) \cr
&= \begin{pmatrix}
& \vdots & \vdots & \vdots & \cr
\cdots & N_{(0,1),(k,k+1)} & N_{(0,1),(k,k)} & N_{(0,1),(k+1,k)} & \cdots \cr
\cdots & N_{(0,0),(k,k+1)} & N_{(0,0),(k,k)} & N_{(0,0),(k+1,k)} & \cdots \cr
\cdots & N_{(1,0),(k,k+1)} & N_{(1,0),(k,k)} & N_{(1,0),(k+1,k)} & \cdots \cr
& \vdots & \vdots & \vdots & 
\end{pmatrix}. 
\end{align*}
Then, in a manner similar to that used for deriving \eqref{eq:N0n_solution}, we obtain 
\begin{equation}
N_{0,k}^{(1,1)} = N_{0,0}^{(1,1)} (R^{(1,1)})^k,\quad 
N_{0,0}^{(1,1)} = (I-A_0^{(1,1)}-R^{(1,1)}A_{-1}^{(1,1)})^{-1}. %= \sum_{k=0}^\infty (A_0^{(1,1)}+R^{(1,1)}A_{-1}^{(1,1)})^k. 
\label{eq:Nn11}
\end{equation}
Define a matrix generating function $\hat{\Phi}_0^{(1,1)}(z)$ as
\[
\hat{\Phi}_0^{(1,1)}(z) = \sum_{k=0}^\infty N_{0,k}^{(1,1)} z^k = N_{0,0}^{(1,1)} \sum_{k=0}^\infty (zR^{(1,1)})^k.
\]
For $k,k'\in\mathbb{Z}$ and $j,j'\in S_0$, let $\hat{\varphi}_{(k,j),(k',j')}^{(1,1)}(z)$ be the $((k,j),(k',j'))$-element of $\hat{\Phi}_0^{(1,1)}(z)$ and denote by $r_{(k,j),(k',j')}^{(1,1)}$ the radius of convergence of $\hat{\varphi}_{(k,j),(k',j')}^{(1,1)}(z)$. 
In terms of $\tilde{q}_{\by,\by'}$, $\hat{\varphi}_{(k,j),(k',j')}^{(1,1)}(z)$ is given as
\begin{equation}
\hat{\varphi}_{(k,j),(k',j')}^{(1,1)}(z) 
= \sum_{l=0}^\infty \tilde{q}_{(k\vee 0,-(k\wedge 0),j),(l+(k'\vee 0),l-(k'\wedge 0),j')} z^l,  
\end{equation}
where $x\vee y=\max\{x,y\}$ and $x\wedge y=\min\{x,y\}$. We have the following property of $r_{(k,j),(k',j')}^{(1,1)}$, which corresponds to Lemma \ref{le:r_cpR}. 
\begin{lemma} \label{le:r_cpR11}
For every $k,k'\in\mathbb{Z}$ and $j,j'\in S_0$, we have $r_{(k,j),(k',j')}^{(1,1)}=\cp(R^{(1,1)})$. Hence, the radius of convergence of $\hat{\Phi}_0^{(1,1)}(z)$ is given by $\cp(R^{(1,1)})$.
\end{lemma}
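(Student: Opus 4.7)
The proof plan is to mirror the two-step structure used in Lemma \ref{le:r_cpR}, which combined Proposition \ref{pr:rc_varphi} (independence of the starting state) with Proposition \ref{pr:some_r_cpR} (identification of the radius with $\cp(R^{(i)})$ at a well-chosen starting state). The same dichotomy adapts to the diagonal direction $(1,1)$ because all tools used there---irreducibility of $P_+$, positivity of every entry of $\tilde{P}_+$, and the matrix-geometric form of the level-$0$ occupation blocks---carry over verbatim to $\{\hat{\bY}^{(1,1)}_n\}$.

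First I would prove the analogue of Proposition \ref{pr:rc_varphi}: for any two starting phases $(k,j),(\tilde{k},\tilde{j})\in\mathbb{Z}\times S_0$ and any target phase $(k',j')\in\mathbb{Z}\times S_0$, $r^{(1,1)}_{(k,j),(k',j')}=r^{(1,1)}_{(\tilde{k},\tilde{j}),(k',j')}$. Setting $\by=(k\vee 0,-(k\wedge 0),j)$ and $\tilde{\by}=(\tilde{k}\vee 0,-(\tilde{k}\wedge 0),\tilde{j})$, irreducibility of $P_+$ (Assumption \ref{as:Q_irreducible}) produces an $n_0$ with $\mathbb{P}(\bY_{n_0}=\by\mid\bY_0=\tilde{\by})>0$. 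The strong Markov argument behind (\ref{eq:inequality_tildeq}) then gives $\tilde{q}_{\tilde{\by},\by''}\ge c\,\tilde{q}_{\by,\by''}$ for every $\by''\in\mathbb{S}_+$ with a positive constant $c$; summing over $\by''=(l+(k'\vee 0),l-(k'\wedge 0),j')$ weighted by $z^l$ and swapping the roles of $\by,\tilde{\by}$ yields equality of the radii.

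Next I would prove the analogue of Proposition \ref{pr:some_r_cpR}: there exists a phase $(k_1,j_1)$ such that $r^{(1,1)}_{(k_1,j_1),(k',j')}=\cp(R^{(1,1)})$ for every $(k',j')$. Both inequalities exploit the factorization $\hat{\Phi}_0^{(1,1)}(z)=N_{0,0}^{(1,1)}\sum_{k=0}^\infty (zR^{(1,1)})^k$ from (\ref{eq:Nn11}) together with the identity $R^{(1,1)}=A_1^{(1,1)}N_{0,0}^{(1,1)}$. Every element of $N_{0,0}^{(1,1)}$ is positive, since $\tilde{P}_+$ has positive entries under Assumption \ref{as:Q_irreducible}, so the implication $\hat{\Phi}_0^{(1,1)}(z)<\infty\Rightarrow\sum_k(zR^{(1,1)})^k<\infty$ gives the upper bound $r^{(1,1)}_{(k,j),(k',j')}\le\cp(R^{(1,1)})$ for every $(k,j),(k',j')$. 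For the lower bound, irreducibility of $P_+$ forces $A_1^{(1,1)}$ to have at least one positive entry, say its $((k_0,j_0),(k_1,j_1))$-entry; then $zA_1^{(1,1)}\hat{\Phi}_0^{(1,1)}(z)\le\sum_{k=0}^\infty(zR^{(1,1)})^k$ together with positivity of that single entry yields $\cp(R^{(1,1)})\le r^{(1,1)}_{(k_1,j_1),(k',j')}$ for every $(k',j')$. Combining with the first step produces the desired identity for every $(k,j),(k',j')$, and the stated radius of convergence of $\hat{\Phi}_0^{(1,1)}(z)$ follows.

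The main obstacle is that the phase space $\mathbb{Z}\times S_0$ of $\{\hat{\bY}^{(1,1)}_n\}$ is countably infinite, so $R^{(1,1)}$ is an infinite-dimensional nonnegative matrix; passing from ``some entries of $\sum_k(zR^{(1,1)})^k$ finite'' to ``every entry finite, hence $z\le\cp(R^{(1,1)})$'' requires an irreducibility argument on $R^{(1,1)}$ that is not automatic from finite-matrix Perron-Frobenius theory. This is the same subtlety already present in Proposition \ref{pr:some_r_cpR}, and it can be addressed by translating irreducibility of $P_+$ into a connectivity property of $R^{(1,1)}$ via its probabilistic interpretation as first-passage counts from level $0$ to level $1$.
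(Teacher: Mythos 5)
Your proposal matches the paper exactly: the paper states that Lemma \ref{le:r_cpR11} ``can be proved in a manner similar to that used for proving Propositions \ref{pr:rc_varphi} and \ref{pr:some_r_cpR}'' and omits the details, and you reconstruct precisely that two-step argument (state-independence via irreducibility of $P_+$ and the bound \eqref{eq:inequality_tildeq}, then identification with $\cp(R^{(1,1)})$ via the matrix-geometric form \eqref{eq:Nn11} and positivity of $N_{0,0}^{(1,1)}$). Your closing observation about the countable phase space and the need for an irreducibility property of $R^{(1,1)}$ is a fair remark on a point the paper leaves implicit, but it does not change the route.
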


Since this lemma can be proved in a manner similar to that used for proving Propositions \ref{pr:rc_varphi} and \ref{pr:some_r_cpR}, we omit the proof. 
%
%The generating function $\hat{\varphi}_{(0,j),(0,j')}^{(1,1)}(z)$ is represented as $\hat{\varphi}_{(0,j),(0,j')}^{(1,1)}(z) = \sum_{n=0}^\infty \tilde{q}_{(0,0,j),(n,n,j')} z^n$. Hence, by the Cauchy-Hadamard theorem and Propositions \ref{pr:limsup_tildeqnn} and \ref{pr:cpR11_upper}, we immediately obtain the following corollary.
%
%\begin{corollary} \label{co:tildeq0011_rate}
%For $i,j'\in S_0$, 
%
%\begin{equation}
%\limsup_{n\to\infty} \frac{1}{n} \log \tilde{q}_{(0,0,j),(n,n,j')} = -\log \cp(R^{1,1)}) = -(\bar{\theta}_1^{(1,1)}+ \bar{\theta}_2^{(1,1)}). 
%\end{equation}
%\end{corollary}

%%%%%%%%%%%%%%%%%%%%%%
%
%
\subsection{Convergence domain of $\Phi_{\bx}(\theta_1,\theta_2)$}

Recall that, for $\bx\in\mathbb{Z}_+^2$, the convergence domain of the matrix moment generating function $\Phi_{\bx}(\theta_1,\theta_2)$ is given as $\calD_{\bx} = \mbox{the interior of }\{(\theta_1,\theta_2)\in\mathbb{R}^2 : \Phi_{\bx}(\theta_1,\theta_2)<\infty\}$. This domain does not depend on $\bx$. 
\begin{proposition} \label{pr:Dx=Dxp}
For every $\bx,\bx'\in\mathbb{Z}_+^2$, $\calD_{\bx}=\calD_{\bx'}$. 
\end{proposition}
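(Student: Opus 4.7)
The plan is to exploit the irreducibility of $P_+$ (Assumption 3.1) together with the elementwise bound on occupation numbers that was already established in the proof of Proposition 4.1 (equation (4.5) in that proof, loosely). The key observation is that the inequality derived there, $\tilde{q}_{\by,\by''}\ge \tilde{q}_{\by',\by''}\,\mathbb{P}(\bY_{n_0}=\by'\,|\,\bY_0=\by)$, holds with a \emph{constant} (in $\by''$) multiplier; consequently the corresponding entries of the matrix moment generating functions for two different starting points dominate each other up to positive constants, so their finiteness sets coincide.

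First I would fix arbitrary $\bx,\bx'\in\mathbb{Z}_+^2$ and an arbitrary phase $j\in S_0$. Set $\by=(\bx,j)$ and $\by'=(\bx',j)$. Since $P_+$ is irreducible (Assumption 3.1), there exists $n_0\ge 0$ with $p_0:=\mathbb{P}(\bY_{n_0}=\by'\,|\,\bY_0=\by)>0$. Then, exactly as in the proof of Proposition 4.1, by conditioning on the first $n_0$ steps and discarding the initial contribution in the sum defining $\tilde{q}$, we obtain for every $\by''=(k_1,k_2,j')\in\mathbb{S}_+$,
\begin{equation*}
\tilde{q}_{(\bx,j),(k_1,k_2,j')}\ \ge\ p_0\,\tilde{q}_{(\bx',j),(k_1,k_2,j')}.
\end{equation*}
The crucial point is that $p_0$ depends only on $\bx,\bx',j$ and $n_0$, not on $(k_1,k_2,j')$.

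Next I would multiply this inequality by $e^{k_1\theta_1+k_2\theta_2}$ and sum over $(k_1,k_2)\in\mathbb{Z}_+^2$; since the bound is termwise with a common positive constant, it passes through the summation to yield
\begin{equation*}
[\Phi_{\bx}(\theta_1,\theta_2)]_{j,j'}\ \ge\ p_0\,[\Phi_{\bx'}(\theta_1,\theta_2)]_{j,j'}\qquad\text{for every }(\theta_1,\theta_2)\in\mathbb{R}^2,\ j,j'\in S_0.
\end{equation*}
Swapping the roles of $\by$ and $\by'$ (again using irreducibility to obtain a positive $n_0'$-step probability $p_0'>0$ in the reverse direction) gives the matching lower bound $[\Phi_{\bx'}(\theta_1,\theta_2)]_{j,j'}\ge p_0'\,[\Phi_{\bx}(\theta_1,\theta_2)]_{j,j'}$.

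Finally, since $p_0,p_0'>0$, each entry of $\Phi_{\bx}(\theta_1,\theta_2)$ is finite if and only if the corresponding entry of $\Phi_{\bx'}(\theta_1,\theta_2)$ is finite, and this holds for every pair $(j,j')$. Hence the sets $\{(\theta_1,\theta_2):\Phi_{\bx}(\theta_1,\theta_2)<\infty\}$ and $\{(\theta_1,\theta_2):\Phi_{\bx'}(\theta_1,\theta_2)<\infty\}$ are identical, and taking interiors yields $\calD_{\bx}=\calD_{\bx'}$. There is no real obstacle here; the only subtlety to flag is that the constant coming from irreducibility is independent of the target state, which is precisely what allows the summation step to preserve the bound.
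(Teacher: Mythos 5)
Your proof is correct and takes essentially the same route as the paper: fix a common phase $j$, use irreducibility of $P_+$ to obtain a positive $n_0$-step probability, apply the termwise bound (the paper's inequality (\ref{eq:inequality_tildeq})) to compare the two moment generating functions entrywise up to a positive constant, and then swap $\bx$ and $\bx'$ to conclude the finiteness sets (hence their interiors) coincide.
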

\begin{proof}
For every $\bx,\bx'\in\mathbb{Z}_+^2$ and $j\in S_0$, since $P_+$ is irreducible, there exists $n_0\ge 0$ such that $\mathbb{P}(\bY_{n_0}=(\bx',j)\,|\,\bY_0=(\bx,j))>0$. Using this $n_0$ and inequality (\ref{eq:inequality_tildeq}), we obtain, for every $\bx,\bx'\in\mathbb{Z}_+^2$ and $j,j'\in S_0$, 
\begin{align}
[\Phi_{\bx}(\theta_1,\theta_2)]_{j,j'}
&= \sum_{k_1=0}^\infty \sum_{k_2=0}^\infty \tilde{q}_{(\bx,j),(k_1,k_2,j')} e^{\theta_1 k_1+\theta_2 k_2} \cr
&\ge \sum_{k_1=0}^\infty \sum_{k_2=0}^\infty \tilde{q}_{(\bx',j),(k_1,k_2,j')} e^{\theta_1 k_1+\theta_2 k_2}\, \mathbb{P}(\bY_{n_0}=(\bx',j)\,|\,\bY_0=(\bx,j)) \cr
&= [\Phi_{\bx'}(\theta_1,\theta_2)]_{j,j'}\, \mathbb{P}(\bY_{n_0}=(\bx',j)\,|\,\bY_0=(\bx,j)), 
\end{align}
and this implies $\calD_{\bx}\subset \calD_{\bx'}$. 
Exchanging $\bx$ with $\bx'$, we obtain $\calD_{\bx'}\subset \calD_{\bx}$, and this completes the proof. 
\end{proof}

A relation between the point sets $\Gamma$ and $\calD_x$ is given as follows.
\begin{lemma} \label{le:domain_Gamma}
For every $\bx\in\mathbb{Z}_+^2$, $\Gamma\subset\calD_{\bx}$ and hence, $\calD\subset\calD_{\bx}$. 
\end{lemma}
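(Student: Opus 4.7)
The plan is to prove $\Gamma\subset\calD_{\bx}$ directly by a probabilistic majorization argument that exploits the Feynman--Kac structure of the 2d-MA-process, and then to deduce $\calD\subset\calD_{\bx}$ from entrywise monotonicity of $\Phi_{\bx}$ in $(\theta_1,\theta_2)$.

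First, starting from $\tilde{q}_{(\bx,j),(k_1,k_2,j')}=\sum_{n=0}^\infty\mathbb{P}(\bY_n=(k_1,k_2,j'),\,\tau>n\mid\bY_0=(\bx,j))$ and summing against $e^{\theta_1 k_1+\theta_2 k_2}$, Fubini/Tonelli (valid since the summand is nonnegative) gives
\[
[\Phi_{\bx}(\theta_1,\theta_2)]_{j,j'}=\sum_{n=0}^\infty \mathbb{E}\big[e^{\theta_1 X_{1,n}+\theta_2 X_{2,n}}\,1(J_n=j')\,1(\tau>n)\,\big|\,\bY_0=(\bx,j)\big].
\]
The central step is to drop the indicator $1(\tau>n)\le 1$ -- which only enlarges a nonnegative quantity -- and then apply the Feynman--Kac identity for the \emph{unrestricted} 2d-MA-process,
\[
\mathbb{E}\big[e^{\theta_1 X_{1,n}+\theta_2 X_{2,n}}\,1(J_n=j')\,\big|\,\bY_0=(\bx,j)\big]=e^{\theta_1 x_1+\theta_2 x_2}\,[A_{*,*}(\theta_1,\theta_2)^n]_{j,j'}.
\]
This identity follows by induction on $n$ from the one-step relation $\mathbb{E}[e^{\theta_1 (X_{1,n+1}-X_{1,n})+\theta_2 (X_{2,n+1}-X_{2,n})}\,1(J_{n+1}=j')\mid J_n=j]=[A_{*,*}(\theta_1,\theta_2)]_{j,j'}$, which is immediate from the definition of $A_{*,*}$ together with space-homogeneity. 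Combining the two steps yields
\[
[\Phi_{\bx}(\theta_1,\theta_2)]_{j,j'}\le e^{\theta_1 x_1+\theta_2 x_2}\Big[\sum_{n=0}^\infty A_{*,*}(\theta_1,\theta_2)^n\Big]_{j,j'}.
\]

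For $(\theta_1,\theta_2)\in\Gamma$ one has $\spr(A_{*,*}(\theta_1,\theta_2))<1$, so the Neumann series on the right converges to $(I-A_{*,*}(\theta_1,\theta_2))^{-1}$, a finite matrix; hence $\Phi_{\bx}(\theta_1,\theta_2)<\infty$ throughout $\Gamma$. Since $\Gamma$ is open (it is the strict sublevel set of the continuous function $\chi$), this already gives $\Gamma\subset\calD_{\bx}$.

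For the second inclusion, take $(\theta_1,\theta_2)\in\calD$ and pick $(\theta_1',\theta_2')\in\Gamma$ with $(\theta_1,\theta_2)<(\theta_1',\theta_2')$. Since $N_{\bx,(k_1,k_2)}\ge O$ and $k_1,k_2\ge 0$, one has $e^{\theta_1 k_1+\theta_2 k_2}\le e^{\theta_1' k_1+\theta_2' k_2}$, so $\Phi_{\bx}(\theta_1,\theta_2)\le \Phi_{\bx}(\theta_1',\theta_2')<\infty$ entrywise; because $\calD$ is open by construction, $\calD\subset\calD_{\bx}$ follows. The only points that require genuine care in this scheme are the Fubini exchange and the induction giving the Feynman--Kac identity, both of which are routine for a space-homogeneous Markov-additive process, so no serious obstacle arises.
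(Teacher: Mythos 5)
Your argument is correct, and it is a genuinely different route from the one taken in the paper. You bound the moment generating function of the killed process by dropping the indicator $1(\tau>n)$ (legitimate since the integrand $e^{\theta_1 X_{1,n}+\theta_2 X_{2,n}}1(J_n=j')$ is nonnegative) and then invoke the unrestricted Feynman--Kac identity $\mathbb{E}\big[e^{\langle\btheta,\bX_n-\bX_0\rangle}1(J_n=j')\mid J_0=j\big]=[A_{*,*}(\btheta)^n]_{j,j'}$, which holds by a one-line induction from space-homogeneity; summing the resulting geometric series gives the explicit entrywise bound
\[
[\Phi_{\bx}(\theta_1,\theta_2)]_{j,j'}\le e^{\theta_1 x_1+\theta_2 x_2}\big[(I-A_{*,*}(\theta_1,\theta_2))^{-1}\big]_{j,j'}<\infty,\qquad (\theta_1,\theta_2)\in\Gamma,
\]
and openness of $\Gamma$ then yields $\Gamma\subset\calD_\bx$; the monotonicity step for $\calD\subset\calD_\bx$ is fine since $N_{\bx,(k_1,k_2)}\ge O$ and the exponents are nonnegative. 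The paper instead uses equation \eqref{eq:mgf_Phi} to write $\hat{\Phi}_{\bx}^{(+)}(z,w)$ as the meromorphic quotient $H(z,w)=F(z,w)/g(z,w)$, proves $H$ is an analytic extension, and performs a three-leg analytic continuation along a path inside $\Gamma$ (first in $z$ with $w=1$, then in $w$, then in $z$ again), using Lemma \ref{le:r_cpR} for the radii of convergence of $\hat{\Phi}_{\bx}^{(1)}$ and $\hat{\Phi}_{\bx}^{(2)}$ and Proposition \ref{pr:sprCzw} to control $\spr(\hat C(z,w))$ off the positive real axes. Your probabilistic majorization is substantially shorter and more elementary, avoids the complex-analytic apparatus entirely, and gives an explicit constant; the paper's route, while heavier, builds the $H(z,w)$ representation that the author uses elsewhere and makes the roles of the boundary generating functions $\hat{\Phi}_{\bx}^{(1)},\hat{\Phi}_{\bx}^{(2)}$ visible. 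Both proofs establish the same statement.
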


We use complex analytic methods for proving this lemma. 
Letting $z$ and $w$ be complex variables, we define complex matrix functions $\hat{\Phi}_{\bx}(z,w)$, $\hat{\Phi}_{\bx}^{(1)}(z)$, $\hat{\Phi}_{\bx}^{(2)}(w)$, $\hat{\Phi}_{\bx}^{(+)}(z,w)$, $\hat{C}(z,w)$, $\hat{C}_0(z,w)$, $\hat{C}_1(z,w)$ and $\hat{C}_2(z,w)$ in the same manner as that used in the case of real variable. They satisfy equation \eqref{eq:Phix_z1z2}.
For $r>0$, denote by $\Delta_r$, $\bar{\Delta}_r$ and $\partial\Delta_r$ the open disk, closed disk and circle of center $0$ and radius $r$ on the complex plane, respectively. 
Since every element of $\tilde{P}_+$ is nonnegative, we immediately obtain, by Lemma \ref{le:r_cpR}, the following proposition.
\begin{proposition} \label{pr:Phi_analytic}
Under Assumption \ref{as:Q_irreducible}, for every $\bx\in\mathbb{Z}_+^2$, $\hat{\Phi}_{\bx}^{(1)}(z)$ is element-wise analytic in $\Delta_{e^{\bar{\theta}_1^{(1)}}}$ and $\hat{\Phi}_{\bx}^{(2)}(w)$ is element-wise analytic in $\Delta_{e^{\bar{\theta}_2^{(2)}}}$.
\end{proposition}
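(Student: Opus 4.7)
My plan is to prove $\Gamma\subset\calD_{\bx}$ by producing a Perron\textendash Frobenius supersolution for the substochastic kernel $P_+$: a strictly positive column vector $\bv$ on $\mathbb{S}_+$ satisfying $P_+\bv\le\chi\,\bv$ with $\chi=\chi(\theta_1,\theta_2)<1$. Once that is in hand, $\calD\subset\calD_{\bx}$ is automatic, because the Taylor coefficients $N_{\bx,(k_1,k_2)}$ of $\hat{\Phi}_{\bx}$ are nonnegative, so $\Phi_{\bx}$ is coordinate-wise monotone in $(\theta_1,\theta_2)$, and any $(\theta_1,\theta_2)\in\calD$ lies coordinate-wise strictly below some point of $\Gamma\subset\calD_{\bx}$ to which finiteness transfers downward.

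Concretely, fix $(\theta_1,\theta_2)\in\Gamma$. Since $A_{*,*}(\theta_1,\theta_2)$ is finite, irreducible, and nonnegative (Assumption \ref{as:P_irreducible}), Perron\textendash Frobenius supplies a strictly positive right eigenvector $\bh$ with $A_{*,*}(\theta_1,\theta_2)\bh=\chi\,\bh$. Define $\bv$ indexed by $\mathbb{S}_+$ via $v_{(x_1,x_2,j)}=e^{\theta_1 x_1+\theta_2 x_2}h_j$. The key step is the pointwise bound
\begin{equation*}
[P_+\bv]_{(\bx,j)}=e^{\theta_1 x_1+\theta_2 x_2}\sum_{\substack{(k,l)\in\{-1,0,1\}^2\\ \bx+(k,l)\in\mathbb{Z}_+^2}}e^{\theta_1 k+\theta_2 l}[A_{k,l}\bh]_j\le e^{\theta_1 x_1+\theta_2 x_2}[A_{*,*}(\theta_1,\theta_2)\bh]_j=\chi\,v_{(\bx,j)},
\end{equation*}
valid at every $(\bx,j)\in\mathbb{S}_+$ because restricting the $(k,l)$-sum to increments that keep the walk inside $\mathbb{Z}_+^2$ only omits nonnegative terms. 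Iterating gives $P_+^n\bv\le\chi^n\bv$, hence $\tilde{P}_+\bv=\sum_{n\ge 0}P_+^n\bv\le(1-\chi)^{-1}\bv<\infty$.

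Setting $h_{\min}=\min_{j\in S_0}h_j>0$ (finiteness of $S_0$ and positivity of $\bh$), for every $(\bx,j)\in\mathbb{S}_+$ one gets
$h_{\min}\,[\Phi_{\bx}(\theta_1,\theta_2)\bone]_j\le[\tilde{P}_+\bv]_{(\bx,j)}<\infty$, so $\Phi_{\bx}(\theta_1,\theta_2)$ is entry-wise finite on $\Gamma$; openness of $\Gamma$ (from continuity of $\chi$) places $(\theta_1,\theta_2)$ in the interior of $\{\Phi_{\bx}<\infty\}=\calD_{\bx}$.

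The one delicate point will be the boundary case of the supersolution inequality: at cells with $x_1=0$ or $x_2=0$ the kernel $P_+$ suppresses some of the nine nominal increments, and I must check that dropping the corresponding terms from $[A_{*,*}(\theta_1,\theta_2)\bh]_j$ cannot break the bound. Because every $A_{k,l}$ and every entry of $\bh$ is nonnegative and $e^{\theta_1 k+\theta_2 l}>0$, omitting those terms only decreases the sum, so the supersolution inequality holds uniformly on $\mathbb{S}_+$ and for any $(\theta_1,\theta_2)\in\mathbb{R}^2$. An alternative attack would start from the functional equation \eqref{eq:mgf_Phi} and solve for $\hat{\Phi}_{\bx}^{(+)}(e^{\theta_1},e^{\theta_2})$ using invertibility of $I-\hat{C}(e^{\theta_1},e^{\theta_2})$ on $\Gamma$ and the convergence radii from Lemma \ref{le:r_cpR}; that route is viable but requires a Vivanti\textendash Pringsheim step to promote analytic extension to pointwise convergence of the positive double series, and is considerably less direct than the Lyapunov argument above.
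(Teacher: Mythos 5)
You have targeted and proved a strictly stronger claim than the one asked: your Lyapunov argument establishes $\Gamma\subset\calD_{\bx}$, which is Lemma \ref{le:domain_Gamma} of the paper, whereas Proposition \ref{pr:Phi_analytic} is the smaller, one-variable claim that $\hat{\Phi}_{\bx}^{(1)}(z)$ and $\hat{\Phi}_{\bx}^{(2)}(w)$ are analytic in the disks $\Delta_{e^{\bar{\theta}_1^{(1)}}}$ and $\Delta_{e^{\bar{\theta}_2^{(2)}}}$. The paper's own proof of that proposition is a one-line appeal to Lemma \ref{le:r_cpR} (which gives the exact radius of convergence via $\cp(R^{(1)})$, $\cp(R^{(2)})$), combined with the fact that a power series with nonnegative coefficients is analytic inside its disk of convergence. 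Your write-up never mentions $\hat{\Phi}_{\bx}^{(1)}$ or $\hat{\Phi}_{\bx}^{(2)}$, so strictly speaking the target statement is not reached; the missing (easy) step is to note that $\hat{\Phi}_{\bx}^{(1)}(e^{\theta_1})=\sum_{k\ge 1}e^{k\theta_1}N_{\bx,(k,0)}$ is a sub-series of $\Phi_{\bx}(\theta_1,\theta_2)$, hence finite whenever $\theta_1$ lies in the projection of $\Gamma$ onto the first coordinate, whose supremum is $\bar{\theta}_1^{(1)}$; this pushes the radius of convergence of the one-variable power series up to at least $e^{\bar{\theta}_1^{(1)}}$ and gives the claimed analyticity (similarly for $\hat{\Phi}_{\bx}^{(2)}$).

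That said, the Lyapunov/Perron--Frobenius supersolution argument itself ($v_{(\bx,j)}=e^{\langle\btheta,\bx\rangle}h_j$ with $P_+\bv\le\chi\bv$, then geometric summation of $\tilde{P}_+\bv$) is correct and is a genuinely different, more elementary route than what the paper does: the paper obtains Lemma \ref{le:domain_Gamma} through a complex-analytic meromorphic-continuation argument in Section 4.4 that itself invokes Proposition \ref{pr:Phi_analytic}, and it obtains the proposition from the QBD rate-matrix machinery of Lemma \ref{le:r_cpR}. Your argument short-circuits both of those. The trade-off is informational: Lemma \ref{le:r_cpR} pins down the \emph{exact} radius of convergence, which is what the equalities \eqref{eq:rate_tildeq10}--\eqref{eq:rate_tildeq01} and Theorem \ref{th:asymptotic_any_direction} actually require, while the supersolution bound only yields a lower bound on the radius and hence only an upper bound on the decay rate. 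So your approach proves the proposition (after the small projection step above) and even Lemma \ref{le:domain_Gamma}, but it cannot replace Lemma \ref{le:r_cpR} in the paper's downstream argument.
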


%%%%%%%%%%%%%%
%
Recall that the matrix generating function $\hat{\Phi}_{\bx}^{(+)}(z,w)$ is the function of two variables defined by the power series 
\begin{equation}
\hat{\Phi}_{\bx}^{(+)}(z,w)=\sum_{k_1=1}^\infty \sum_{k_2=1}^\infty N_{\bx,(k_1,k_2)} z^{k_1} w^{k_2}, 
\label{eq:varphip_series}
\end{equation}
which is also the Taylor series for the function $\hat{\Phi}_{\bx}^{(+)}(z,w)$ at $(z,w)=(0,0)$. 
Since $\Phi_{\bx}^{(+)}(\theta_1,\theta_2)=\hat{\Phi}_{\bx}^{(+)}(e^{\theta_1},e^{\theta_2})$, we use power series \eqref{eq:varphip_series} for proving Lemma \ref{le:domain_Gamma}. 
Under Assumption \ref{as:finiteness_tildeQ}, since $\hat{\Phi}_{\bx}^{(+)}(1,1)<\infty$, power series (\ref{eq:varphip_series}) converges absolutely on $\bar{\Delta}_1\times\bar{\Delta}_1$ and $\hat{\Phi}_{\bx}^{(+)}(z,w)$ is analytic in $\Delta_1\times\Delta_1$.
Lemma \ref{le:domain_Gamma} asserts that, for every $(p,q)\in\Gamma$, power series (\ref{eq:varphip_series}) converges absolutely in $\bar{\Delta}_{e^p}\times\bar{\Delta}_{e^q}$, and to prove it, it suffices to show that, every $(p,q)\in\Gamma$, power series (\ref{eq:varphip_series}) converges at $(z,w)=(e^p,e^q)$ since any coefficient in the power series is nonnegative. 
Define a matrix function $H(z,w)$ as $H(z,w) = F(z,w)/g(z,w)$, where 
\begin{align}
F(z,w) & = z^{s_0} w^{s_0} \bigl( \hat{\Phi}_{\bx}^{(1)}(z) (\hat{C}_1(z,w)-I) + \hat{\Phi}_{\bx}^{(2)}(w) (\hat{C}_2(z,w)-I) \cr 
&\qquad\qquad\qquad\qquad + N_{\bx,(0,0)} (\hat{C}_0(z,w)-I) + z^{x_1} w^{x_2} \bigr)\,\adj(I-\hat{C}(z,w)), \label{eq:Fzw} \\
g(z,w) &=z^{s_0} w^{s_0}\det(I-\hat{C}(z,w)). 
\label{eq:gzw}
\end{align}
Note that the inverse of $I-\hat{C}(z,w)$ is given by $\adj(I-\hat{C}(z,w))/\det(I-\hat{C}(z,w))$. The function $g(z,w)$ is analytic in $\mathbb{C}^2$ and, by Proposition \ref{pr:Phi_analytic}, each element of $F(z,w)$ is analytic in $\Delta_{e^{\bar{\theta}_1^{(1)}}}\times\Delta_{e^{\bar{\theta}_2^{(2)}}}$.  
For $(z,w)\in\bar{\Delta}_1\times\bar{\Delta}_1$, since power series (\ref{eq:varphip_series}) is absolutely convergent, equation (\ref{eq:mgf_Phi}) holds and we have $\hat{\Phi}_{\bx}^{(+)}(z,w)=H(z,w)$. Hence, by the identity theorem, we see that $H(z,w)$ is an analytic extension of $\hat{\Phi}_{\bx}^{(+)}(z,w)$. Hereafter, we denote the analytic extension by the same notation $\hat{\Phi}_{\bx}^{(+)}(z,w)$.
In the proof of Lemma \ref{le:domain_Gamma}, we use the following proposition. 
\begin{proposition}[Proposition 4.1 of \cite{Ozawa18}] \label{pr:sprCzw}
Under Assumption \ref{as:P_irreducible}, for $z,w\in\mathbb{C}$ such that $z\ne 0$ and $w\ne 0$, if $|z|\ne z$ or $|w|\ne w$, then $\spr(\hat{C}(z,w))<\spr(\hat{C}(|z|,|w|)$. 
\end{proposition}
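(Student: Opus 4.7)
The plan is to view the inequality as a two-parameter version of Wielandt's classical comparison theorem. Writing $z=|z|e^{i\alpha}$ and $w=|w|e^{i\beta}$, the identity $|z^iw^j|=|z|^i|w|^j$ together with the nonnegativity of each $A_{i,j}$ gives the elementwise bound $|\hat{C}(z,w)|\le\hat{C}(|z|,|w|)$. Under Assumption~\ref{as:P_irreducible}, $\hat{C}(|z|,|w|)=A_{*,*}(\log|z|,\log|w|)$ is nonnegative, irreducible and aperiodic, so Wielandt's inequality yields the weak comparison $\spr(\hat{C}(z,w))\le\spr(\hat{C}(|z|,|w|))$. The substance of the proposition is therefore the strict inequality, which I would obtain by contradiction.

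Suppose $\spr(\hat{C}(z,w))=\spr(\hat{C}(|z|,|w|))=:\lambda$. The equality case of Wielandt's theorem then supplies $\phi\in\mathbb{R}$ and a diagonal unitary matrix $D=\diag(e^{i\psi_k})$ with $\hat{C}(z,w)=e^{i\phi}D\,\hat{C}(|z|,|w|)D^{-1}$. Comparing $(k,l)$-entries gives
\[
\sum_{i,j}e^{i(i\alpha+j\beta)}|z|^i|w|^j[A_{i,j}]_{k,l}=e^{i(\phi+\psi_k-\psi_l)}\sum_{i,j}|z|^i|w|^j[A_{i,j}]_{k,l},
\]
and, since the right-hand side is a unit-modulus factor times a nonnegative real number, the equality case of the triangle inequality forces every nonzero term on the left to share that common argument:
\[
i\alpha+j\beta\equiv\phi+\psi_k-\psi_l\pmod{2\pi}\qquad\text{whenever}\ [A_{i,j}]_{k,l}>0.
\]

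Telescoping this identity along any path $k_0\to k_1\to\cdots\to k_n$ in the background chain whose consecutive transitions $k_{m-1}\to k_m$ carry additive increment $(i_m,j_m)$ with $[A_{i_m,j_m}]_{k_{m-1},k_m}>0$ yields
\[
\alpha\sum_{m=1}^n i_m+\beta\sum_{m=1}^n j_m\equiv n\phi+\psi_{k_0}-\psi_{k_n}\pmod{2\pi}.
\]
Applied to a return of the 2d-MMRW to the state $(0,0,k_0)$, this makes $k_n=k_0$ and $\sum i_m=\sum j_m=0$, hence $n\phi\equiv 0\pmod{2\pi}$. Aperiodicity of the 2d-MMRW guarantees that every sufficiently large integer is realised as such a return length $n$, so subtracting consecutive values of $n$ forces $\phi\equiv 0\pmod{2\pi}$.

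With $\phi=0$, the telescoped identity reads $\alpha I+\beta J\equiv 0\pmod{2\pi}$ for every background cycle $k_0\to\cdots\to k_0$ with total additive increment $(I,J)$. Irreducibility of the 2d-MMRW lets us travel from $(0,0,k_0)$ to each of $(\pm 1,0,k_0)$ and $(0,\pm 1,k_0)$ with positive probability, and the background sequence of each such excursion is itself a cycle at $k_0$ whose additive increment is respectively $(\pm 1,0)$ and $(0,\pm 1)$. Taking $(I,J)=(1,0)$ and $(I,J)=(0,1)$ forces $\alpha\equiv\beta\equiv 0\pmod{2\pi}$, so $z=|z|$ and $w=|w|$, contradicting the hypothesis. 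The main obstacle is carefully invoking the equality case of Wielandt's theorem and then producing, from the bare irreducibility assumption, background cycles at a single state whose additive increments generate $\mathbb{Z}^2$; once that is done, the aperiodicity step handles $\phi$ and the irreducibility step handles $\alpha,\beta$.
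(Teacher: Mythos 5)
The paper does not prove this proposition; it cites it from Ozawa--Kobayashi (2018), so there is no in-paper proof to compare against. Your argument is correct and is the natural route for a statement of this type: Wielandt's comparison gives the weak inequality, and the equality case (valid because $\hat{C}(|z|,|w|)=A_{*,*}(\log|z|,\log|w|)$ is irreducible under Assumption~\ref{as:P_irreducible}) produces the phase $\phi$ and the diagonal unitary $D$. The entrywise comparison plus the equality case of the triangle inequality, the telescoping over background paths, the use of aperiodicity of $\{\bY_n\}$ to force $\phi\equiv 0$, and the use of irreducibility on $\mathbb{Z}^2\times S_0$ to produce background cycles at $k_0$ with additive increments $(1,0)$ and $(0,1)$ (thereby forcing $\alpha\equiv\beta\equiv 0$) are all sound. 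One small remark you could make explicit: for this particular pair, $|\hat{C}(z,w)|=\hat{C}(|z|,|w|)$ holds automatically and not just as an inequality, so the ``$|B|=A$'' part of Wielandt's equality condition is never a constraint here, and the whole content is the existence of $\phi$ and $D$.
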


%%%%%%%%%%%%%%%%%%%%%%%%%%%%
%
%
\begin{figure}[htbp]
\begin{center}
\includegraphics[width=8.5cm,trim=150 250 150 30]{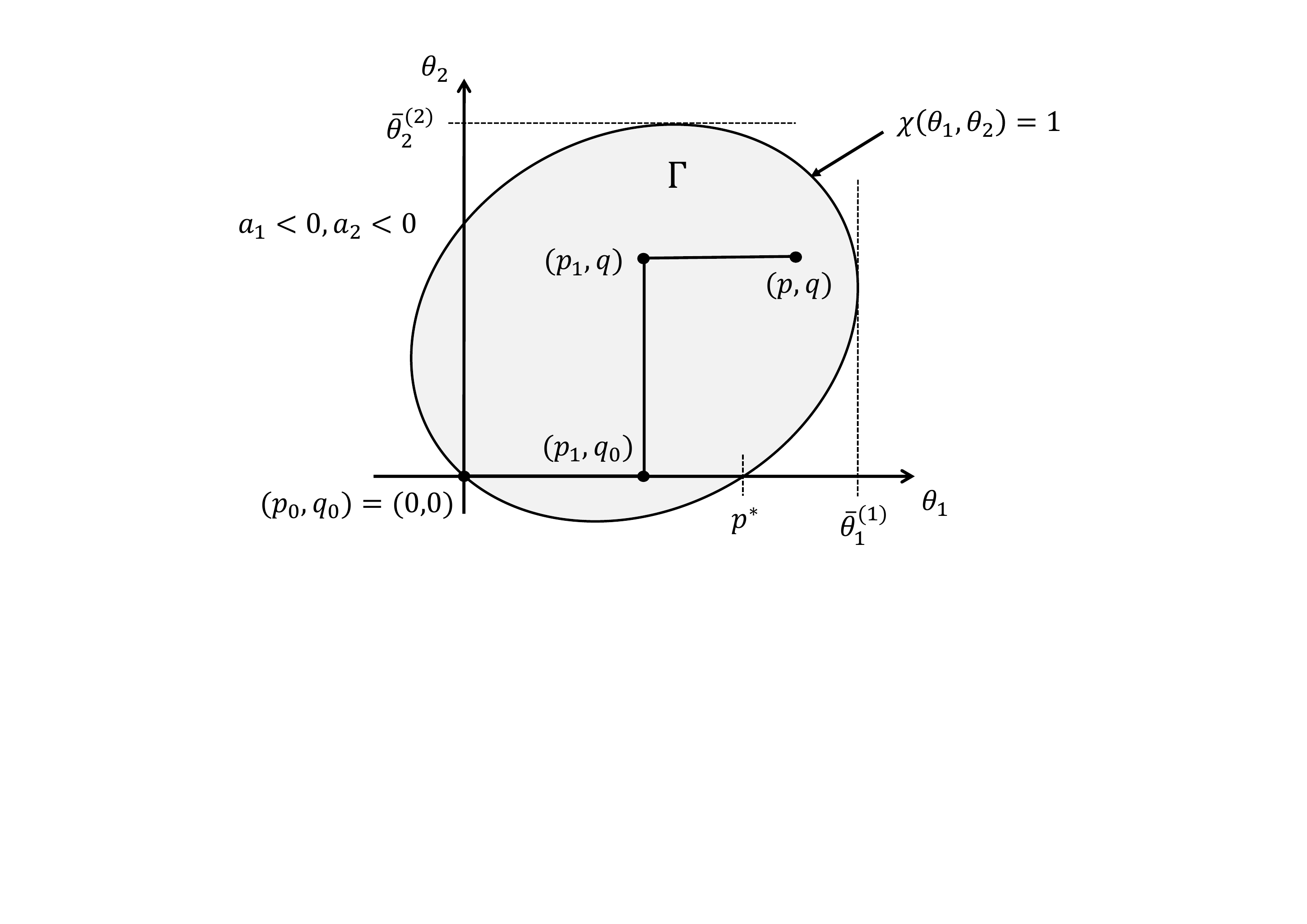} %left under right upper
\caption{Convergence domain of $\hat{\Phi}_{\bx}^{(+)}(e^{\theta_1},e^{\theta_2})$}
\label{fig:conv_domain_1}
\end{center}
\end{figure}
\begin{proof}[Proof of Lemma \ref{le:domain_Gamma}]
By Lemma \ref{le:r_cpR}, the radius of convergence of $\hat{\Phi}_{\bx}^{(1)}(z)$ is $e^{\bar{\theta}_1^{(1)}}$ and that of $\hat{\Phi}_{\bx}^{(2)}(w)$ is $e^{\bar{\theta}_2^{(2)}}$. Since we have, for every $p,q\in\Gamma$, $p<\bar{\theta}_1^{(1)}$ and $q<\bar{\theta}_2^{(2)}$, both $\hat{\Phi}_{\bx}^{(1)}(z)$ and $\hat{\Phi}_{\bx}^{(2)}(w)$ converge absolutely at every $(z,w)\in\Delta_{e^p}\times\Delta_{e^q}$. 
Hence, from (\ref{eq:Phix_z1z2}), we see that, in order to prove the lemma, it suffices to show that, for every $(p,q)\in\Gamma$, power series (\ref{eq:varphip_series}) converges at $(z,w)=(e^p,e^q)$. 
We demonstrate it. 

Under Assumption \ref{as:finiteness_tildeQ}, either $a_1$ or $a_2$ is negative. Here, we assume $a_1<0$; the proof for the case where $a_2<0$ is analogous. By Lemma 2.3 of \cite{Ozawa18}, we have $\chi_{\theta_1}(0,0)=a_1<0$, where $\chi_{\theta_1}(\theta_1,\theta_2)=(\partial/\partial\,\theta_1)\,\chi(\theta_1,\theta_2)$, and this implies that $\Gamma$ includes open interval $\{(\theta_1,0)\in\mathbb{R}^2; 0<\theta_1<p^* \}$ on the complex plain, where $p^*$ is the value of $\theta_1$ that satisfies $\chi(p^*,0)=1$ (see Fig.\ \ref{fig:conv_domain_1}). 
Let $(p,q)$ be an arbitrary point in $\Gamma$ and consider a path on $\Gamma\cup\{(0,0)\}$ connecting different points $(p_0,q_0)=(0,0)$, $(p_1,q_0)$, $(p_1,q)$ and $(p,q)$ by lines in this order, where we assume $1<p_1<p^*$ (see Fig.\ \ref{fig:conv_domain_1}). It is always possible since the closure of $\Gamma$, $\bar{\Gamma}$, is a closed convex set. 

First, we consider a matrix function of one variable given by $\hat{\Phi}_{\bx}^{(+)}(z,1)$. The Taylor series for $\hat{\Phi}_{\bx}^{(+)}(z,1)$ at $z=0$ is given by power series (\ref{eq:varphip_series}), where $w$ is set at $1$, and $\hat{\Phi}_{\bx}^{(+)}(z,1)$ is identical to $H(z,1)$ on a domain where $H(z,1)$ is well defined. 
Let $\varepsilon$ be a sufficiently small positive number. Since $g(z,1)$ and each element of $F(z,1)$ are  analytic as a function of one variable in $\Delta_{e^{p_1}+\varepsilon}$, $H(z,1)$ is meromorphic as a function of one variable in that domain. 
We have, for $z\in\Delta_{e^{p_1}+\varepsilon}\setminus\bar{\Delta}_1$, $\spr(\hat{C}(z,1))\le\spr(\hat{C}(|z|,1))<1$, 
and by Proposition \ref{pr:sprCzw}, we have, for $z\in\partial\Delta_{1}\setminus\{1\}$, $\spr(\hat{C}(z,1))<\spr(\hat{C}(|z|,1))=1$. 
Hence, $g(z,1)\ne 0$ for any $z\in\Delta_{e^{p_1}+\varepsilon}\setminus(\Delta_1\cup\{1\})$, and each element of $H(z,1)$ is analytic on $\Delta_{e^{p_1}+\varepsilon}\setminus(\Delta_1\cup\{1\})$. 
Furthermore, we have $\hat{\Phi}_{\bx}^{(+)}(1,1)=H(1,1)<\infty$, and this implies that the point $z=1$ is not a pole of any element of $H(z,1)$; hence, it is a removable singularity. 
From this and the fact that $\hat{\Phi}_{\bx}^{(+)}(z,1)$ is analytic in $\Delta_{1}$, we see that $\hat{\Phi}_{\bx}^{(+)}(z,1)$ is analytic in $\Delta_{e^{p_1}+\varepsilon}$. This implies that the radius of convergence of the Taylor series for $\hat{\Phi}_{\bx}^{(+)}(z,1)$ at $z=0$ is greater than $e^{p_1}$, and power series (\ref{eq:varphip_series}) converges at $(z,w)=(e^{p_1},1)$.

Next, we consider a matrix function of one variable given by $\hat{\Phi}_{\bx}^{(+)}(e^{p_1},w)$. By the fact obtained above, the Taylor series for $\hat{\Phi}_{\bx}^{(+)}(e^{p_1},w)$ at $w=0$ is given by power series (\ref{eq:varphip_series}), where $z$ is set at $e^{p_1}$, and $\hat{\Phi}_{\bx}^{(+)}(e^{p_1},w)$ is analytic as a function of one variable in $\Delta_{1}$.
Furthermore, we know that $\hat{\Phi}_{\bx}^{(+)}(e^{p_1},w)$ is identical to $H(e^{p_1},w)$ on a domain where $H(e^{p_1},w)$ is well defined. 
If $q\le 0$, then it is obvious that power series (\ref{eq:varphip_series}) converges at $(z,w)=(e^{p_1},e^q)$. Therefore, we assume $q>0$. 
Let $\varepsilon$ be a sufficiently small positive number. For $w\in\Delta_{e^q+\varepsilon}\setminus\bar{\Delta}_{1-\varepsilon}$, we have $\spr(\hat{C}(e^{p_1},w))\le\spr(\hat{C}(e^{p_1},|w|))<1$. Hence, for the same reason used in the case of $H(z,1)$, we see that $H(e^{p_1},w)$ is analytic in $\Delta_{e^q+\varepsilon}\setminus\bar{\Delta}_{1-\varepsilon}$, and this implies that   $\hat{\Phi}_{\bx}^{(+)}(e^{p_1},w)$ is analytic in $\Delta_{e^q+\varepsilon}$.
Hence, the radius of convergence of the Taylor series for $\hat{\Phi}_{\bx}^{(+)}(e^{p_1},w)$ at $w=0$ is greater than $e^q$, and power series (\ref{eq:varphip_series}) converges at $(z,w)=(e^{p_1},e^q)$.
Applying a similar procedure to the matrix function $\hat{\Phi}_{\bx}^{(+)}(z,e^q)$, we see that power series (\ref{eq:varphip_series}) converges at $(z,w)=(e^p,e^q)$, and this completes the proof. 
\end{proof}

In the following section, we will prove that $\calD=\calD_{\bx}$ holds for every $\bx\in\mathbb{Z}_+^2$ (see Corollary \ref{co:domain_Phix}).

%%%%%%%%%%%%%%%%%%%%%%%%%%%%%%%%%%%%%%%
%
%  Section 5
%
%%%%%%%%%%%%%%%%%%%%%%%%%%%%%%%%%%%%%%%
%
\section{Asymptotics of the occupation measures} \label{sec:asymptotic}

%%%%%%%%%%%%%%%%%%%%%%
%
%
\subsection{Asymptotic decay rate in an arbitrary direction}

By Lemma \ref{le:r_cpR} and the Cauchy-Hadamard theorem, we have, for every $x_1,x_2, x_1',x_2'\in\mathbb{Z}_+$ and $j,j'\in S_0$, 
\begin{align}
& \limsup_{k\to\infty} \frac{1}{k} \log \tilde{q}_{(0,x_2,j),(k,x_2',j')} = - \sup_{(\theta_1,\theta_2)\in\Gamma} \theta_1 = - \sup_{\btheta\in\Gamma} \langle (1,0), \btheta \rangle, \label{eq:rate_tildeq10} \\
& \limsup_{k\to\infty} \frac{1}{k} \log  \tilde{q}_{(x_1,0,j),(x_1',k,j')} = - \sup_{(\theta_1,\theta_2)\in\Gamma} \theta_2 = - \sup_{\btheta\in\Gamma} \langle (0,1), \btheta \rangle.  \label{eq:rate_tildeq01}
\end{align}
Furthermore, by \eqref{eq:N0n_solution} and Corollary 2.1 of \cite{Ozawa19}, ``$\limsup$" in equations \eqref{eq:rate_tildeq10} and \eqref{eq:rate_tildeq01} can be replaced with ``$\lim$".
The following results are inferred from these equations. 
\begin{theorem} \label{th:asymptotic_any_direction}
For any vector $\bc=(c_1,c_2)$ of positive integers, for every $x_1,x_2\in\mathbb{Z}_+$ such that $x_1=0$ or $x_2=0$, for every $l_1,l_2\in\mathbb{Z}_+$ such that $l_1=0$ or $l_2=0$ and for every $j,j'\in S_0$, 
\begin{align}
\lim_{k\to\infty} \frac{1}{k} \log \tilde{q}_{(x_1,x_2,j),(c_1 k+l_1,c_2 k+l_2,j')} 
%&= \lim_{k\to\infty} \frac{1}{k} \log \tilde{q}_{(0,x_2,j),(c_1 k,c_2 k+l,j')} \cr
&= - \sup_{\btheta\in\Gamma}\, \langle \bc, \btheta \rangle. 
\label{eq:limsup_tildeq}
\end{align}
\end{theorem}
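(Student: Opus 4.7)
The $\limsup$ half is immediate from what is already in place: by Lemma~\ref{le:domain_Gamma}, $\Gamma\subset\calD_{\bx}$, so Proposition~\ref{pr:limsup_tildeqnn} gives
\[
\limsup_{k\to\infty}\frac{1}{k}\log\tilde{q}_{(x_1,x_2,j),(c_1k+l_1,c_2k+l_2,j')}
\le -\sup_{\btheta\in\calD_{\bx}}\langle\btheta,\bc\rangle
\le -\sup_{\btheta\in\Gamma}\langle\btheta,\bc\rangle.
\]
My plan for the matching $\liminf$ is to perform an exponential change of measure (a Cram\'er twist / $h$-transform) on the 2d-MMRW that makes the ray $k\bc$ the mean trajectory of the twisted walk, and then extract a polynomial lower bound on the remaining occupation measure from a local limit theorem.

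First I would pick $\btheta^*=(\theta_1^*,\theta_2^*)$ attaining $\max_{\btheta\in\bar{\Gamma}}\langle\bc,\btheta\rangle$. Such a maximizer exists because $\bar{\Gamma}$ is closed, bounded and convex (Propositions~\ref{pr:chiconvex} and~\ref{pr:barGamma_bounded}), and since $\bc>\bzero$ it must lie on $\partial\bar{\Gamma}$, so $\chi(\btheta^*)=1$. Let $\bh>\bzero$ be the right Perron--Frobenius eigenvector of $A_{*,*}(\btheta^*)$, normalized so that $A_{*,*}(\btheta^*)\bh=\bh$, and set $H=\diag(\bh)$. Define twisted one-step blocks
\[
\tilde{A}_{i,l}=e^{i\theta_1^*+l\theta_2^*}\,H^{-1}A_{i,l}\,H,\qquad i,l\in\{-1,0,1\}.
\]
Because $\sum_{i,l}\tilde{A}_{i,l}\,\bone = H^{-1}A_{*,*}(\btheta^*)\bh=\bone$, the $\tilde{A}_{i,l}$ are stochastic and define a new 2d-MMRW $\{\tilde{\bY}_n\}$. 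Using the log-convexity of $\chi$ (Proposition~\ref{pr:chiconvex}) and the Lagrange/KKT condition for maximizing the linear functional $\langle\bc,\cdot\rangle$ over the sublevel set $\{\chi\le 1\}$, one gets $\nabla\chi(\btheta^*)=\mu\bc$ with $\mu>0$; this vector is exactly the mean increment of $\{\tilde{\bY}_n\}$, so both coordinates drift strictly positively into the quadrant.

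A telescoping computation of per-step weights along any trajectory in $\mathbb{S}_+$ then yields the pathwise identity
\[
\tilde{q}_{(\bx,j),(\bx',j')}
=\frac{h_j}{h_{j'}}\,e^{-\langle\btheta^*,\bx'-\bx\rangle}\,
\tilde{q}^{\mathrm{tw}}_{(\bx,j),(\bx',j')},
\]
where $\tilde{q}^{\mathrm{tw}}$ is the corresponding occupation measure of $\{\tilde{\bY}_n\}$ on $\mathbb{S}_+$. Substituting $\bx'=(c_1k+l_1,c_2k+l_2)$ gives $\langle\btheta^*,\bx'-\bx\rangle=k\langle\btheta^*,\bc\rangle+O(1)$, so after taking logarithms the required bound reduces to showing
\[
\liminf_{k\to\infty}\frac{1}{k}\log\tilde{q}^{\mathrm{tw}}_{(\bx,j),(c_1k+l_1,c_2k+l_2,j')}\ge 0,
\]
i.e.\ the twisted occupation measure should decay at most subexponentially along the ray $k\bc$.

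The hard part will be precisely this last step. Because the twisted drift has both components strictly positive and $(\bx,j)$ sits on one of the axes ($x_1=0$ or $x_2=0$), each coordinate of $\{\tilde{\bY}_n\}$ dominates a one-dimensional random walk with strictly positive drift starting at the origin, so the probability of never exiting $\mathbb{S}_+$ is bounded below by a positive constant independent of $k$. Conditioned on this non-exit event, a local central limit theorem for the Markov additive process $\{\tilde{\bY}_n\}$ at times $n\asymp k/\|\nabla\chi(\btheta^*)\|$ (where the mean position is asymptotically $k\bc$) should give $\tilde{q}^{\mathrm{tw}}_{(\bx,j),(c_1k+l_1,c_2k+l_2,j')}\ge C/k$ for some $C>0$ and all large $k$, which is far more than enough to conclude the $\liminf$ bound. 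The two pieces I expect to require the most care are (i) the two-dimensional local CLT for the Markov-modulated walk under only the irreducibility and aperiodicity Assumptions~\ref{as:P_irreducible} and~\ref{as:Q_irreducible}, and (ii) the uniform lower bound on the non-exit probability from axis states; both should go through because the twisted drift points strictly into the open positive quadrant.
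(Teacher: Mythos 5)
Your $\limsup$ half is the same as the paper's (Proposition~\ref{pr:limsup_tildeqnn} plus Lemma~\ref{le:domain_Gamma}). Your $\liminf$ half, however, takes a genuinely different route. The paper never performs a change of measure: instead it introduces a rescaled 2d-MMRW $\{{}^{\bc}\bY_n\}$ whose level increment is $\bc$ (absorbing the remainders modulo $c_1,c_2$ into the background state), passes to the diagonal QBD representation $\{{}^{\bc}\hat{\bY}_n^{(1,1)}\}$ with rate matrix ${}^{\bc}\!R^{(1,1)}$, bounds $\log\cp({}^{\bc}\!R^{(1,1)})$ by $\sup_{\btheta\in\bar\Gamma}\langle\bc,\btheta\rangle$ via Propositions~\ref{pr:cpR11_upper} and~\ref{pr:QBDcp}, and then reads off the decay rate of $\tilde q$ from the asymptotics of $[({}^{\bc}\!R^{(1,1)})^k]^{1/k}\to\cp({}^{\bc}\!R^{(1,1)})^{-1}$ (Corollary~2.1 of \cite{Ozawa19}) through the matrix-geometric identity~\eqref{eq:Nn11}. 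That is, the paper stays entirely within the matrix-analytic framework and never needs any local limit theorem. Your Cram\'er/$h$-transform setup is correct as far as it goes: the choice of $\btheta^*$ on $\partial\bar\Gamma$ with $\nabla\chi(\btheta^*)\propto\bc$, the stochasticity of $\tilde A_{i,l}=e^{i\theta_1^*+l\theta_2^*}H^{-1}A_{i,l}H$, the telescoping path identity $\tilde q_{(\bx,j),(\bx',j')}=(h_j/h_{j'})e^{-\langle\btheta^*,\bx'-\bx\rangle}\tilde q^{\mathrm{tw}}_{(\bx,j),(\bx',j')}$, and the reduction to $\liminf_k k^{-1}\log\tilde q^{\mathrm{tw}}\ge 0$ are all sound. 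But the part you flag as hard really is the whole substance: you need (i) a two-dimensional local CLT for the twisted Markov-additive process, and (ii) a uniform positive lower bound on the non-exit probability for a walk started on an axis with both drift components strictly positive, and then you must argue that the ``hit the target at time $n_k\approx k/|\nabla\chi(\btheta^*)|$'' event and the ``no exit up to $n_k$'' event can be combined without losing a polynomial factor. None of this is in the paper, and point~(i) in particular is a substantive technical input that the matrix-analytic route deliberately sidesteps; note also that because the twisted drift violates Assumption~\ref{as:finiteness_tildeQ}, none of the finiteness machinery of Sections~\ref{sec:QBDrepresentation}--\ref{sec:convergence_domain} transfers to $\tilde q^{\mathrm{tw}}$, so you are working with only the raw path-sum definition and transience of the twisted walk. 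In short: a correct and natural alternative strategy in outline, but it replaces the paper's two cited matrix-theoretic lemmas with a local CLT that would need its own proof, so what you have is a plan rather than a proof.
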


%%%%%%%%%%
%
In order to prove this theorem, we introduce another representation of the 2d-MMRW $\{\bY_n\}=\{(X_{1,n},X_{2,n},J_n)\}$. 
Let $\bc=(c_1,c_2)$ be a vector of positive integers. For $i\in\{1,2\}$, denote by ${}^{\bc}\!X_{i,n}$ and ${}^{\bc}\!M_{i,n}$ the quotient and remainder of $X_{i,n}$ divided by $c_i$, respectively, i.e., 
\[
X_{1,n}=c_1 {}^{\bc}\!X_{1,n}+{}^{\bc}\!M_{1,n},\quad
X_{2,n}=c_2 {}^{\bc}\!X_{2,n}+{}^{\bc}\!M_{2,n},
\]
where $0\le {}^{\bc}\!M_{1,n}\le c_1-1$ and $0\le {}^{\bc}\!M_{2,n}\le c_2-1$. 
Define a process $\{{}^{\bc}\bY_n\}$ as $\{{}^{\bc}\bY_n\}=\{({}^{\bc}\!X_{1,n},{}^{\bc}\!X_{2,n},{}^{\bc}\!\bJ_n)\}$, where ${}^{\bc}\!\bJ_n=({}^{\bc}\!M_{1,n},{}^{\bc}\!M_{2,n},J_n)$. The process $\{{}^{\bc}\bY_n\}$ is a 2d-MMRW with background process $\{{}^{\bc}\!\bJ_n\}$ and its state space is given by $\mathbb{Z}^2\times(\mathbb{Z}_{0,c_1-1}\times\mathbb{Z}_{0,c_2-1}\times S_0)$, where, for $k,l\in\mathbb{Z}$ such that $k\le l$, we denote by $\mathbb{Z}_{k,l}$ the set of integers from $k$ through $l$, i.e., $\mathbb{Z}_{k,l}=\{k,k+1,...,l\}$. 
The transition probability matrix of $\{{}^{\bc}\bY_n\}$, denoted by ${}^{\bc}\!P$, has a double-tridiagonal block structure like $P$. Denote by  ${}^{\bc}\!A_{i,j},\,i,j\in\{-1,0,1\}$, the nonzero blocks of ${}^{\bc}\!P$. 
For a positive integer $k$ and for $ k_1,k_2\in\{1,2,...,k\}$, define a $k\times k$ matrix $E_{(k_1,k_2)}^{[k]}$ as
\[
[E_{(k_1,k_2)}^{[k]}]_{i,j} = 
\left\{ \begin{array}{ll}
1, & \mbox{$i=k_1$ and $j=k_2$}, \cr
0, & \mbox{otherwise}.
\end{array} \right.
\]
Define the following $c_1\times c_1$ block matrices: for $j\in\{-1,0,1\}$, 
\begin{align*}
B_{-1,j} = E_{(1,c_1)}^{[c_1]}\otimes A_{-1,j},\ 
B_{0,j} = 
\begin{pmatrix}
A_{0,j} & A_{1,j} & & & \cr
A_{-1,j}  & A_{0,j} & A_{1,j} & & \cr
& \ddots & \ddots & \ddots & \cr
& & A_{-1,j}  & A_{0,j} & A_{1,j}  \cr
& & & A_{-1,j} & A_{0,j} 
\end{pmatrix},\ 
B_{1,j} = E_{(c_1,1)}^{[c_1]}\otimes A_{1,j}, 
\end{align*}
where $\otimes$ is the Kronecker product operator. 
Each block ${}^{\bc}\!A_{i,j}$ is a $c_1c_2\times c_1c_2$ block matrix and they are given as follows: for  $i\in\{-1,0,1\}$, 
\begin{align*}
{}^{\bc}\!A_{i,-1} = E_{(1,c_2)}^{[c_2]}\otimes B_{i,-1},\  
{}^{\bc}\!A_{i,0} = 
\begin{pmatrix}
B_{i,0} & B_{i,1} & & & \cr
B_{i,-1}  & B_{i,0} & B_{i,1} & & \cr
& \ddots & \ddots & \ddots & \cr
& & B_{i,-1}  & B_{i,0} & B_{i,1}  \cr
& & & B_{i,-1} & B_{i,0} 
\end{pmatrix},\ 
{}^{\bc}\!A_{i,1} = E_{(c_2,1)}^{[c_2]}\otimes B_{i,1}.
\end{align*}
Define a matrix function ${}^{\bc}\!A_{*,*}(\theta_1,\theta_2)$ as
\[
{}^{\bc}\!A_{*,*}(\theta_1,\theta_2) =\sum_{i,j\in\{-1,0,1\}} e^{i\theta_1+j\theta_2}\, {}^{\bc}\!A_{i,j}.
\]
The following relation holds between $A_{*,*}(\theta_1,\theta_2)$ and ${}^{\bc}\!A_{*,*}(\theta_1,\theta_2)$.
\begin{proposition} \label{pr:QBDcp}
For any vector $\bc=(c_1,c_2)$ of positive integers, we have 
\begin{equation}
\cp(A_{*,*}(\theta_1,\theta_2)) = \cp({}^{\bc}\!A_{*,*}(c_1 \theta_1,c_2 \theta_2)). 
\end{equation}
\end{proposition}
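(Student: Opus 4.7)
My plan is to build an explicit intertwining relation between $A_{*,*}(\theta_1,\theta_2)$ and ${}^{\bc}\!A_{*,*}(c_1\theta_1,c_2\theta_2)$ and then conclude via the Perron--Frobenius theorem. Both matrices are finite-dimensional, nonnegative and (as I will verify) irreducible, so $\cp(\cdot)=\spr(\cdot)^{-1}$, and the task reduces to proving $\spr(A_{*,*}(\theta_1,\theta_2))=\spr({}^{\bc}\!A_{*,*}(c_1\theta_1,c_2\theta_2))$.

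First, I would define a $(c_1c_2s_0)\times s_0$ matrix $V$ by $[V]_{(m_1,m_2,j),j'}=e^{\theta_1 m_1+\theta_2 m_2}\,\delta_{j,j'}$, with rows indexed by the background space $\mathbb{Z}_{0,c_1-1}\times\mathbb{Z}_{0,c_2-1}\times S_0$ of $\{{}^{\bc}\bY_n\}$, and establish the intertwining identity
\[
{}^{\bc}\!A_{*,*}(c_1\theta_1,c_2\theta_2)\,V \;=\; V\,A_{*,*}(\theta_1,\theta_2).
\]
It rests on the bijective correspondence: a one-step transition of ${}^{\bc}\bY$ from background $(m_1,m_2,j)$ to $(m_1',m_2',j')$ with level-increment $(i,k)\in\{-1,0,1\}^2$ is precisely a one-step transition of $\bY$ with displacements $\Delta_1=c_1 i+(m_1'-m_1)$ and $\Delta_2=c_2 k+(m_2'-m_2)$, both still lying in $\{-1,0,1\}$. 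The exponential weights then satisfy $e^{ic_1\theta_1+kc_2\theta_2}\cdot e^{\theta_1 m_1'+\theta_2 m_2'}=e^{\theta_1 m_1+\theta_2 m_2}\cdot e^{\Delta_1\theta_1+\Delta_2\theta_2}$, and summing over the unique $(m_1',m_2')$ realizing each $(\Delta_1,\Delta_2)\in\{-1,0,1\}^2$ yields the claim.

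By Assumption \ref{as:P_irreducible}, $A_{*,*}(\theta_1,\theta_2)$ is irreducible, so Perron--Frobenius supplies a strictly positive right eigenvector $\bw>0$ with eigenvalue $\chi(\theta_1,\theta_2)=\spr(A_{*,*}(\theta_1,\theta_2))$. The intertwining gives ${}^{\bc}\!A_{*,*}(c_1\theta_1,c_2\theta_2)(V\bw)=\chi(\theta_1,\theta_2)\,(V\bw)$, and since $[V\bw]_{(m_1,m_2,j)}=e^{\theta_1 m_1+\theta_2 m_2}w_j>0$, the vector $V\bw$ is strictly positive. To conclude, I still need irreducibility of ${}^{\bc}\!A_{*,*}(c_1\theta_1,c_2\theta_2)$; since it has the same zero pattern as the background transition matrix ${}^{\bc}\!A_{*,*}(0,0)$, I would deduce this by noting that for any background states $(m_1,m_2,j)$ and $(m_1',m_2',j')$ one can pick any $(x_1',x_2')\in\mathbb{Z}^2$ with $x_i'\equiv m_i'\pmod{c_i}$; by Assumption \ref{as:P_irreducible} the chain $\bY$ reaches $(x_1',x_2',j')$ from $(m_1,m_2,j)$ in finitely many steps, and the projection of that trajectory ends at background state $(m_1',m_2',j')$ in ${}^{\bc}\bY$. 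Uniqueness (up to scaling) of the positive eigenvector of an irreducible nonnegative matrix then forces $\spr({}^{\bc}\!A_{*,*}(c_1\theta_1,c_2\theta_2))=\chi(\theta_1,\theta_2)$, and taking reciprocals gives the proposition.

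The main obstacle is the explicit verification of the intertwining identity: the blocks ${}^{\bc}\!A_{i,j}$ are built from Kronecker products involving the carrying-indicator blocks $E^{[c_i]}_{(1,c_i)}$ and $E^{[c_i]}_{(c_i,1)}$, which activate only when $m_i$ hits $0$ or $c_i-1$. The uniform relation $\Delta=c\,i+(m'-m)$ covers all four interior/boundary regimes simultaneously, but a careful case analysis is required to match each block against the correct exponential weight.
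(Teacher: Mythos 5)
Your proposal is correct and it takes a genuinely different route from the paper. The paper proves Proposition \ref{pr:QBDcp} by first establishing the auxiliary Proposition \ref{pr:block_cp} (valid even for countably-infinite block sizes) via explicit sub-invariant measures and Seneta's characterization of convergence parameters, and then applies that lemma twice: once to pass from $A_{*,*}(\theta_1,\theta_2)$ to the $c_1\times c_1$ block-circulant $B_{*,*}(c_1\theta_1,\theta_2)$, and once more to pass from $B_{*,*}$ to the $c_1c_2\times c_1c_2$ block-circulant ${}^{\bc}\!A_{*,*}(c_1\theta_1,c_2\theta_2)$. You instead construct in one shot a rectangular intertwiner $V$ with entries $e^{\theta_1 m_1+\theta_2 m_2}\delta_{j,j'}$ and verify the identity ${}^{\bc}\!A_{*,*}(c_1\theta_1,c_2\theta_2)\,V=V\,A_{*,*}(\theta_1,\theta_2)$ by matching increments $\Delta_i=c_i\,i+(m_i'-m_i)$; since both matrices here are finite and you verify irreducibility of ${}^{\bc}\!A_{*,*}$, Perron--Frobenius lets you read off equality of spectral radii from the strictly positive eigenvector $V\bw$. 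Your increment bookkeeping is correct (for fixed $(m_1,m_2)$ and each $(\Delta_1,\Delta_2)\in\{-1,0,1\}^2$ there is a unique admissible $(i,k,m_1',m_2')$, which is exactly the quotient/remainder decomposition of $\Delta_i$), and your irreducibility argument (projecting a $\bY$-path onto the background state space of ${}^{\bc}\bY$) is sound. The trade-off: the paper's two-step argument via Proposition \ref{pr:block_cp} also works in the countably-infinite setting without invoking Perron--Frobenius, whereas your intertwining argument is tighter and more transparent in the finite case but is tied to the spectral-radius characterization $\cp=\spr^{-1}$, which needs finite dimension. Your method also directly exposes the relation between the Perron eigenvectors of the two matrices, which the paper's sub-invariant-measure bound does not.
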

Since the proof of this proposition is elementary, we give it in Appendix \ref{sec:proof_QBDcp}.

%%%%%%%%%%
%
\begin{proof}[Proof of Theorem \ref{th:asymptotic_any_direction}]
From Proposition \ref{pr:limsup_tildeqnn} and Lemma \ref{le:domain_Gamma}, we immediately obtain, for every $x_1,x_2\in\mathbb{Z}_+$, $l_1,l_2\in\mathbb{Z}_+$ and $j,j'\in S_0$,  
\begin{equation}
\limsup_{k\to\infty} \frac{1}{k} \log \tilde{q}_{(x_1,x_2,j),(c_1 k+l_1,c_2 k+l_2,j')} \le -\sup_{\btheta\in\Gamma}\,\langle \bc,\btheta \rangle.
\end{equation}

In order to obtain the lower bounds, define a process $\{{}^{\bc}\hat{\bY}_n\}=\{({}^{\bc}\!\hat{X}_{1,n},{}^{\bc}\!\hat{X}_{2,n},{}^{\bc}\!\hat{\bJ}_n)\}$ as ${}^{\bc}\hat{\bY}_n={}^{\bc}\bY_{\tau\wedge n}$, where $\tau$ is the stopping time given as $\tau=\inf\{n\ge 0; \bY_n\in\mathbb{S}\setminus\mathbb{S}_+\}$. 
According to $\{\hat{\bY}_n^{(1,1)}\}$ considered in Sect.\ \ref{sec:QBDrepresentation}, define a process $\{{}^{\bc}\hat{\bY}_n^{(1,1)}\}$ as 
\[
{}^{\bc}\hat{\bY}_n^{(1,1)} = 
(\min\{{}^{\bc}\!\hat{X}_{1,n},{}^{\bc}\!\hat{X}_{2,n}\},({}^{\bc}\!\hat{X}_{1,n}-{}^{\bc}\!\hat{X}_{2,n},{}^{\bc}\!\hat{\bJ}_n)). 
\]
The process $\{{}^{\bc}\hat{\bY}_n^{(1,1)}\}$ is a QBD process with countably many phases, where  $\min\{{}^{\bc}\!\hat{X}_{1,n},{}^{\bc}\!\hat{X}_{2,n}\}$ is the level and $({}^{\bc}\!\hat{X}_{1,n}-{}^{\bc}\!\hat{X}_{2,n},{}^{\bc}\!\hat{\bJ}_n)$ is the phase. 
Let ${}^{\bc}\!R^{(1,1)}$ be the rate matrix of $\{{}^{\bc}\hat{\bY}_n^{(1,1)}\}$.  By Propositions \ref{pr:cpR11_upper} and \ref{pr:QBDcp}, we have
\begin{align}
\log \cp({}^{\bc}\!R^{(1,1)}) 
&\le \sup\{\theta_1+\theta_2\in\mathbb{R}; \cp({}^{\bc}\!A_{*,*}(\theta_1,\theta_2))^{-1}\le 1\}  \cr
&= \sup\{c_1\theta_1+c_2\theta_2\in\mathbb{R}; \cp({}^{\bc}\!A_{*,*}(c_1 \theta_1,c_2 \theta_2))^{-1}\le 1\}  \cr
&= \sup\{c_1\theta_1+c_2\theta_2\in\mathbb{R}; \cp(A_{*,*}(\theta_1,\theta_2))^{-1}\le 1\}.
 \label{eq:cpcR_upper}
\end{align}
For some $(i'',l_1'',l_2'',j'')\in\mathbb{Z}\times\mathbb{Z}_{0,c_1-1}\times\mathbb{Z}_{0,c_2-1}\times S_0$ and for any $(i',l_1',l_2',j')\in\mathbb{Z}\times\mathbb{Z}_{0,c_1-1}\times\mathbb{Z}_{0,c_2-1}\times S_0$, we have, by Corollary 2.1 of \cite{Ozawa19}, 
\begin{equation}
\lim_{k\to\infty} \left( [({}^{\bc}\!R^{(1,1)})^k]_{(i'',l_1'',l_2'',j''),(i',l_1',l_2',j')} \right)^{\frac{1}{k}} = \cp({}^{\bc}\!R^{(1,1)})^{-1}.
\end{equation}
If $i'\ge 0$, then the state ${}^{\bc}\hat{\bY}_n^{(1,1)}=(k,i',(l_1',l_2',j'))$ corresponds to the state $\bY_n=(c_1 k+c_1 i'+l_1',c_2 k+l_2',j')$. Hence, from \eqref{eq:Nn11}, setting $l=c_1 i'+l_1'$ and $l_2'=0$, we obtain, for every $x_1,x_2\in\mathbb{Z}_+$ such that $x_1=0$ or $x_2=0$ and for every $j\in S_0$,
\begin{equation}
\tilde{q}_{(x_1,x_2,j),(c_1 k+l,c_2 k,j')} \ge \tilde{q}_{(x_1,x_2,j),(c_1 i''+l_1'',l_2'',j'')} [({}^{\bc}\!R^{(1,1)})^k]_{(i'',l_1'',l_2'',j''),(i',l_1',0,j')}. 
\label{eq:tildeq_under1}
\end{equation}
Analogously, if $i'< 0$, then setting $l_1'=0$ and $l=-c_2 i'+l_2'$, we obtain,  for every $x_1,x_2\in\mathbb{Z}_+$ such that $x_1=0$ or $x_2=0$ and for every $j\in S_0$,
\begin{equation}
\tilde{q}_{(x_1,x_2,j),(c_1 k,c_2 k+l,j')} \ge \tilde{q}_{(x_1,x_2,j),(l_1'',c_2 i''+l_2'',j'')} [({}^{\bc}\!R^{(1,1)})^k]_{(i'',l_1'',l_2'',j''),(i',0,l_2',j')}. 
\label{eq:tildeq_under2}
\end{equation}
From \eqref{eq:tildeq_under1}, \eqref{eq:tildeq_under2} and \eqref{eq:cpcR_upper}, we obtain the desired lower bound as follows: for every $x_1,x_2\in\mathbb{Z}_+$ such that $x_1=0$ or $x_2=0$, for every $l_1,l_2\in\mathbb{Z}_+$ such that $l_1=0$ or $l_2=0$ and for every $j,j'\in S_0$, 
\begin{equation}
\liminf_{k\to\infty} \frac{1}{k} \log \tilde{q}_{(x_1,x_2,j),(c_1 k+l_1,c_2 k+l_2,j')} \ge -\cp({}^{\bc}\!R^{(1,1)})) \ge -\sup_{\btheta\in\Gamma}\,\langle \bc,\btheta \rangle.
\end{equation}
This completes the proof. 
\end{proof}

%It seems hard to prove this conjecture. Someone may think that it is a good idea to use a multivariate version of the Cauchy-Hadamard theorem, but it does not lead us to equation (\ref{eq:limsup_tildeq}). 
%
%For any $(\theta_1,\theta_2)\in\partial\calD$, where $\partial\calD$ is the boundary of the convergence domain $\calD$, and for every $(\bx,j)\in\mathbb{S}_+$ and $j'\in S_0$, we have, by the Cauchy-Hadamard theorem, 
%
%\begin{equation}
%\limsup_{n_1+n_2\to\infty} \left(\tilde{q}_{(\bx,j),(n_1,n_2,j')} e^{n_1 \theta_1+n_2 \theta_2} \right)^{\frac{1}{n_1+n_2}} = 1.
%\label{eq:multiCHT}
%\end{equation}
%
%Letting $n_1=c_1 n$ and $n_2= c_2 n$, where $n\in\mathbb{Z}_+$ and $\bc=(c_1,c_2)\in\mathbb{Z}_+^2$, we obtain, from (\ref{eq:multiCHT}), 
%
%\begin{equation}
%\limsup_{n\to\infty} \left(\tilde{q}_{(\bx,j),(c_1 n,c_2 n,j')} e^{(c_1 \theta_1+c_2 \theta_2) n} \right)^{\frac{1}{(c_1+c_2)n}} \le 1, 
%\end{equation}
%
%and this implies
%
%\begin{equation}
%\limsup_{n\to\infty} \frac{1}{n} \log \tilde{q}_{(\bx,j),(c_1 n,c_2 n,j')} \le - \sup_{\btheta\in\calD} \langle \bc,\btheta \rangle. 
%\end{equation}
%
%This is just Proposition \ref{pr:limsup_tildeqnn}. We leave the problem as a further study. 

%%%%%%%%%%%%%%
%
From Lemma \ref{le:domain_Gamma} and Theorem \ref{th:asymptotic_any_direction}, we obtain the following property of the convergence domain.
\begin{corollary} \label{co:domain_Phix}
For every $\bx\in\mathbb{Z}_+^2$, $\calD_{\bx}=\calD$.
\end{corollary}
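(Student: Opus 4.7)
The plan is to combine the exact decay rate from Theorem \ref{th:asymptotic_any_direction} with a convex-separation argument. By Proposition \ref{pr:Dx=Dxp}, $\calD_{\bx}$ does not depend on $\bx$, so I may fix $\bx=(0,0)$. Lemma \ref{le:domain_Gamma} already supplies one inclusion, $\calD\subset\calD_{\bx}$, so the only work is the reverse inclusion $\calD_{\bx}\subset\calD$.

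Fix $\btheta=(\theta_1,\theta_2)\in\calD_{\bx}$. Since $\calD_{\bx}$ is open I may choose $\varepsilon>0$ with $\btheta^{\varepsilon}:=(\theta_1+\varepsilon,\theta_2+\varepsilon)\in\calD_{\bx}$, so $\Phi_{(0,0)}(\btheta^{\varepsilon})<\infty$ entry-wise. For any pair $\bc=(c_1,c_2)$ of positive integers and any $j,j'\in S_0$, restricting the double series defining $\Phi_{(0,0)}$ to the diagonal subsequence $(k_1,k_2)=(c_1 k,c_2 k)$ yields $\sum_{k\ge 0} e^{k\langle\bc,\btheta^{\varepsilon}\rangle}\,\tilde{q}_{((0,0),j),(c_1 k,c_2 k,j')}<\infty$, whence the root test gives
\[
\limsup_{k\to\infty}\frac{1}{k}\log\tilde{q}_{((0,0),j),(c_1 k,c_2 k,j')} \le -\langle\bc,\btheta^{\varepsilon}\rangle.
\]
Matching this upper bound against the exact limit in Theorem \ref{th:asymptotic_any_direction} gives
\[
\sup_{\btheta'\in\Gamma}\langle\bc,\btheta'\rangle \ge \langle\bc,\btheta^{\varepsilon}\rangle. \qquad(\star)
\]
Clearing a common denominator extends $(\star)$ to every positive rational $\bc$, and by continuity of both sides in $\bc$ further to every $\bc\in\mathbb{R}_+^2\setminus\{\bzero\}$.

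Next I convert $(\star)$ into the existence of a point of $\bar{\Gamma}$ dominating $\btheta^{\varepsilon}$ componentwise. By Propositions \ref{pr:chiconvex} and \ref{pr:barGamma_bounded}, $\bar{\Gamma}$ is compact and convex, so $\mathcal{R}:=\bar{\Gamma}+(-\mathbb{R}_+^2)$ is closed and convex. If $\btheta^{\varepsilon}\notin\mathcal{R}$, the separating hyperplane theorem furnishes a non-zero $\bc\in\mathbb{R}^2$ with $\langle\bc,\btheta^{\varepsilon}\rangle>\sup_{\by\in\mathcal{R}}\langle\bc,\by\rangle$; boundedness of the supremum forces $\bc\in\mathbb{R}_+^2$, and then this supremum equals $\sup_{\btheta'\in\bar{\Gamma}}\langle\bc,\btheta'\rangle=\sup_{\btheta'\in\Gamma}\langle\bc,\btheta'\rangle$, contradicting $(\star)$. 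Hence $\btheta^{\varepsilon}\in\mathcal{R}$, i.e.\ there exists $\btheta^{\dagger}\in\bar{\Gamma}$ with $\btheta^{\dagger}\ge\btheta^{\varepsilon}>\btheta$ componentwise.

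Finally, $\Gamma$ is the non-empty interior of the convex set $\bar{\Gamma}$ (non-emptiness of $\Gamma$ is established in the proof of Lemma \ref{le:domain_Gamma}); fixing any $\btheta^{\circ}\in\Gamma$, the open segment from $\btheta^{\circ}$ to $\btheta^{\dagger}$ lies entirely in $\Gamma$, and for $t\in(0,1)$ close enough to $1$ the point $(1-t)\btheta^{\circ}+t\btheta^{\dagger}$ is still componentwise strictly greater than $\btheta$, producing the witness needed to place $\btheta$ in $\calD$. The only delicate step is the separation argument itself — checking that $\mathcal{R}$ is closed (Minkowski sum of the compact set $\bar{\Gamma}$ with the closed cone $-\mathbb{R}_+^2$) and that the separating normal necessarily lies in the non-negative orthant — but once this is set up, the remainder is routine.
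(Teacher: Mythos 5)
Your proof is correct, and it takes a genuinely different route from the paper's even though the two arguments share the same raw ingredients. Both reduce to $\bx=(0,0)$ via Proposition \ref{pr:Dx=Dxp}, both invoke Lemma \ref{le:domain_Gamma} for $\calD\subset\calD_{\bzero}$, and both ultimately rely on Theorem \ref{th:asymptotic_any_direction} together with a root-test observation applied to the diagonal series $\sum_{k}e^{k\langle\bc,\btheta\rangle}\tilde{q}_{(0,0,j),(c_1k,c_2k,j')}$. Where you diverge is in turning the resulting family of inequalities into the set inclusion $\calD_{\bzero}\subset\calD$. The paper first sandwiches $\calD_{\bzero}$ inside $\Gamma^{(1)}\cap\Gamma^{(2)}$ (using Lemma \ref{le:r_cpR}), parametrizes the relevant boundary arc of $\bar\Gamma$ via $\bar\zeta_2$, establishes monotonicity of $\bar\zeta_2'$, shows the rational-direction tangency points $\mathbb{D}_0$ are dense in the boundary curve $\mathbb{D}$, and finally derives a contradiction by picking a nearby tangency direction for any hypothetical $(p_0,q_0)\in\calD_{\bzero}\setminus\bar\calD$. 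You replace all of that explicit geometry with a single convex-separation step: the inequality $(\star)$ is promoted from positive integer directions to all of $\mathbb{R}_+^2\setminus\{\bzero\}$ by scaling and continuity of the support function, and then membership of $\btheta^{\varepsilon}$ in $\mathcal{R}=\bar\Gamma+(-\mathbb{R}_+^2)$ follows from the Hahn--Banach separation theorem (the normal is forced into the nonnegative orthant by finiteness of the support function of the recession cone). This buys a shorter, more abstract proof that sidesteps the boundary parametrization, $\bar\zeta_2'$, the density argument, and the auxiliary half-plane sets $\Gamma^{(1)},\Gamma^{(2)}$ entirely; the paper's more constructive version, on the other hand, exposes the tangency structure of $\bar\Gamma$ explicitly, which is also what feeds the asymptotics elsewhere in the paper. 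Your final passage from $\btheta^{\dagger}\in\bar\Gamma$ to a witness in $\Gamma$ (open segment to an interior point, using $\Gamma=\mathrm{int}\,\bar\Gamma$ and the strict margin $\varepsilon$) is the right fix for the open-versus-closed mismatch and is standard convexity.
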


In order to prove the corollary, we introduce some notation. 
Recall that, by Propositions \ref{pr:chiconvex} and \ref{pr:barGamma_bounded}, the point set $\bar{\Gamma}=\{(\theta_1,\theta_2)\in\mathbb{R}^2; \chi(\theta_1,\theta_2)=\spr(A_{*,*}(\theta_1,\theta_2))\le 1\}$ is a closed convex set and bounded. 
%
%Let $\lambda_i(z,w),\,i\in\{1,2,...,s_0\},$ be the eigenvalues of $\hat{C}(z,w)$, counting multiplicities. We assume $\lambda_0(z,w)$ corresponds to $\hat{\chi}(z,w)$. 
%
For $i\in\{1,2\}$, define a point $(\underline{\theta}_1^{(i)},\underline{\theta}_2^{(i)})$ as $(\underline{\theta}_1^{(i)},\underline{\theta}_2^{(i)}) = \arg\min_{(\theta_1,\theta_2)\in\bar{\Gamma}} \theta_i$. We have already defined $(\bar{\theta}_1^{(1)},\bar{\theta}_2^{(1)})$ and $(\bar{\theta}_1^{(2)},\bar{\theta}_2^{(2)})$. 
For a given $\theta_1\in(\underline{\theta}_1^{(1)}, \bar{\theta}_1^{(1)})$, equation $\chi(\theta_1,\theta_2)=1$ has just two different real solutions: $\theta_2=\underline{\zeta}_2(\theta_1),\,\bar{\zeta}_2(\theta_1)$, where $\underline{\zeta}_2(\theta_1)<\bar{\zeta}_2(\theta_1)$.  
%
%For a given $\theta_2\in(\underline{\theta}_2^{(2)}, \bar{\theta}_2^{(2)})$, equation $\chi(\theta_1,\theta_2)=1$ has just two different real solutions: $\theta_2=\underline{\zeta}_1(\theta_2),\,\bar{\zeta}_1(\theta_2)$, where $\underline{\zeta}_1(\theta_2)<\bar{\zeta}_1(\theta_2)$. 
%
The function $\bar{\zeta}_2(\theta_1)$ is monotonically decreasing in $(\bar{\theta}_1^{(2)},\bar{\theta}_1^{(1)})$. 
Further define the following domains:
\begin{align*}
&\Gamma^{(1)} =  \{(\theta_1,\theta_2)\in\mathbb{R}^2; \theta_1<\bar{\theta}_1^{(1)} \} = \{(\theta_1,\theta_2)\in\mathbb{R}^2; \cp(A_*^{(1)}(\theta_1))^{-1}<1 \}, \\
&\Gamma^{(2)} =  \{(\theta_1,\theta_2)\in\mathbb{R}^2; \theta_2<\bar{\theta}_2^{(2)} \} = \{(\theta_1,\theta_2)\in\mathbb{R}^2; \cp(A_*^{(2)}(\theta_2))^{-1}<1 \}. 
%&\Gamma^{(1,1)} =  \{(\theta_1,\theta_2)\in\mathbb{R}^2; \theta_1+\theta_2<\bar{\theta}_1^{(1,1)}+\bar{\theta}_2^{(1,1)} \}.
\end{align*}

\begin{figure}[htbp]
\begin{center}
\includegraphics[width=17cm,trim=20 290 20 20]{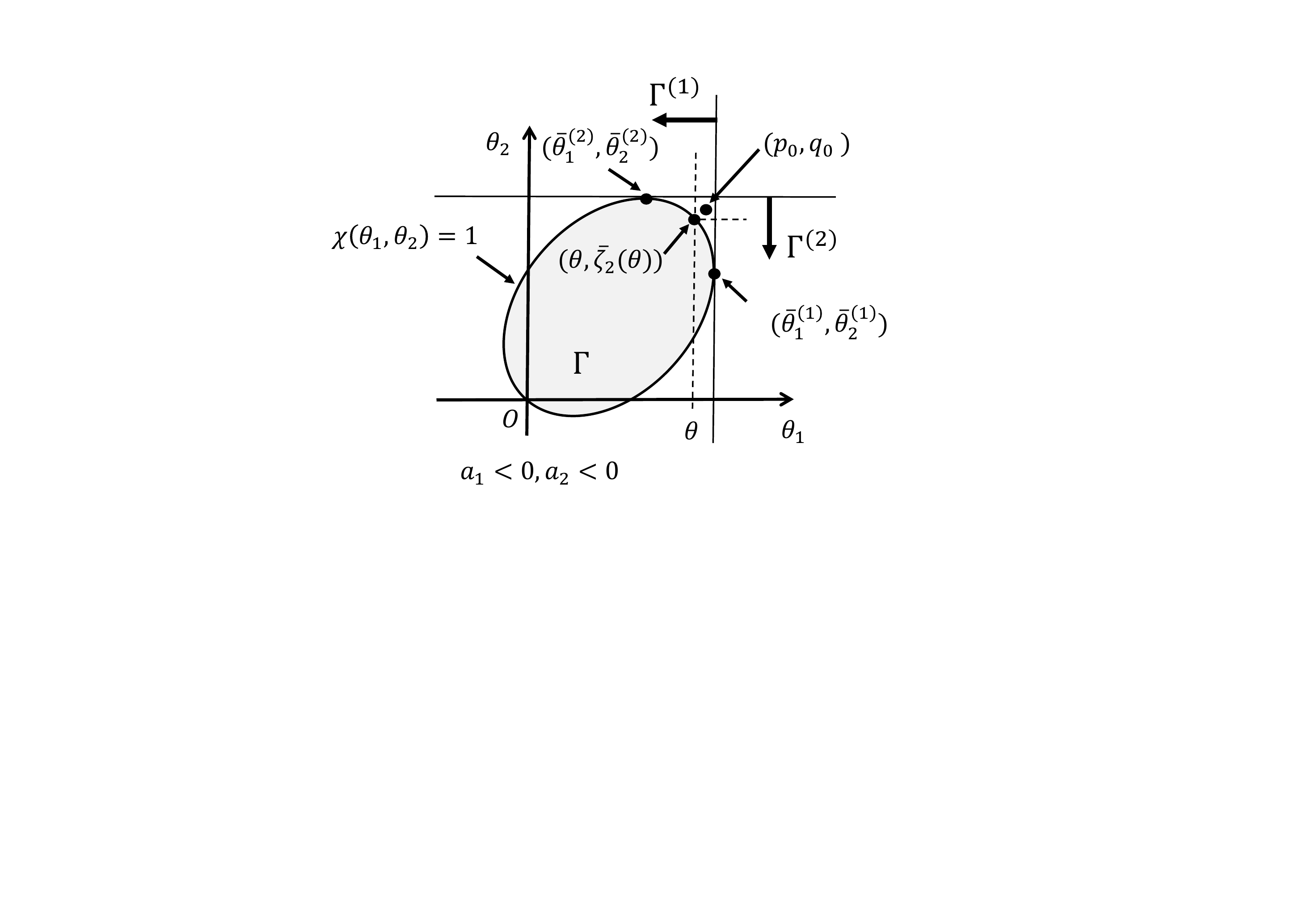} %left under right upper
\caption{Convergence domain of $\Phi_{\bx}(\theta_1,\theta_2)$}
\label{fig:conv_domain_2}
\end{center}
\end{figure}

\begin{proof}[Proof of Corollary \ref{co:domain_Phix}]
We prove $\calD_{\bzero}=\calD$, where $\bzero=(0,0)$. By Proposition \ref{pr:Dx=Dxp}, this implies $\calD_{\bx}=\calD$ for every $\bx\in\mathbb{Z}_+^2$. 
Regarding $\hat{\Phi}_{\bzero}^{(1)}(e^{\theta_1})$ and $\hat{\Phi}_{\bzero}^{(2)}(e^{\theta_2})$ as functions of two variables $\theta_1$ and $\theta_2$, we see, by Lemma \ref{le:r_cpR}, that the convergence domain of $\hat{\Phi}_{\bzero}^{(1)}(e^{\theta_1})$ is given by $\Gamma^{(1)}$ and that of $\hat{\Phi}_{\bzero}^{(1)}(e^{\theta_2})$ by $\Gamma^{(2)}$. From (\ref{eq:Phix_z1z2}), we, therefore, obtain $
\calD_{\bzero}\subset \Gamma^{(1)} \cap \Gamma^{(2)}$. 
On the other hand, by Lemma \ref{le:domain_Gamma}, we have $\calD \subset \calD_{\bzero}$. Hence, in order to prove $\calD_{\bzero}=\calD$, it suffices to demonstrate that $((\Gamma^{(1)} \cap \Gamma^{(2)})\setminus\calD) \cap \calD_{\bzero} = \emptyset$ (see Fig.\ \ref{fig:conv_domain_2}). 

%%%%%%%
%
Let $\bc=(c_1,c_2)$ be a vector of positive integers. Define a point ${}^{\bc}\btheta=({}^{\bc}\theta_1,{}^{\bc}\theta_2)$ as 
\[
({}^{\bc}\theta_1,{}^{\bc}\theta_2) = \arg\max_{\btheta\in\bar{\Gamma}}\, \langle \bc,\btheta \rangle.
\]
The point ${}^{\bc}\btheta$ is represented as ${}^{\bc}\btheta=({}^{\bc}\theta_1,\bar{\zeta}_2({}^{\bc}\theta_1))$ and satisfies 
\[
\frac{d}{d \theta} (c_1 \theta + c_2 \bar{\zeta}_2(\theta))\Big|_{\theta={}^{\bc}\theta_1} = c_1 + c_2 \bar{\zeta}'_2({}^{\bc}\theta_1) = 0. 
\]
Hence, we obtain $\bar{\zeta}'_2({}^{\bc}\theta_1) = -c_1/c_2$. 
Since $\bar{\zeta}'_2(\theta)$ monotonically decreases from $0$ to $-\infty$ when $\theta$ increases from $\bar{\theta}_1^{(2)}$ to $\bar{\theta}_1^{(1)}$, we see that the point set $\mathbb{D}_0=\{({}^{\bc}\theta_1,{}^{\bc}\theta_2); \bc=(c_1,c_2)\in\mathbb{Z}_+^2,\,c_1>0,\,c_2>0\}$ is dense in the curve $\mathbb{D}=\{(\theta,\bar{\zeta}_2(\theta)); \theta\in[\bar{\theta}_1^{(2)},\bar{\theta}_1^{(1)}] \}$. 
For $j,j'\in S_0$, define a moment generating function $\varphi_{\bc}(\theta_1,\theta_2)$ as 
\begin{equation}
\varphi_{\bc}(\theta_1,\theta_2) = \sum_{k=0}^\infty e^{(c_1 \theta_1 +c_2 \theta_2)k}\, \tilde{q}_{(0,0,j),(c_1 k,c_2 k,j')}.
\label{eq:varphic}
\end{equation}
By Theorem \ref{th:asymptotic_any_direction} and the Cauchy-Hadamard theorem, we see that the radius of convergence of the power series in the right hand side of \eqref{eq:varphic} is $e^{c_1 {}^{\bc}\theta_1+c_2 {}^{\bc}\theta_2}$ and this implies that $\varphi_{\bc}(\theta_1,\theta_2)$ diverges if $\theta_1>{}^{\bc}\theta_1$ and $\theta_2>{}^{\bc}\theta_2$. 
From the definition of $\varphi_{\bc}(\theta_1,\theta_2)$, we obtain 
\begin{equation}
\varphi_{\bc}(\theta_1,\theta_2) \le \sum_{k_1=0}^\infty \sum_{k_2=0}^\infty e^{\theta_1 k_1+\theta_2 k_2}\, \tilde{q}_{(0,0,j),(k_1,k_2,j')} 
= [\Phi_{\bzero}(\theta_1,\theta_2)]_{j,j'}.
\label{eq:varphic_Phi}
\end{equation}
%
%%%%%%%%
%
Here, we suppose $((\Gamma^{(1)} \cap \Gamma^{(2)})\setminus\calD) \cap \calD_{\bzero} \ne \emptyset$. Since $\calD_{\bzero}$ is an open set,  there exists a point $(p_0,q_0)\in ((\Gamma^{(1)} \cap \Gamma^{(2)})\setminus\bar{\calD}) \cap \calD_{\bzero}$, where $\bar{\calD}$ is the closure of $\calD$. We have $\Phi_{\bzero}(p_0,q_0)<\infty$. 
On the other hand, from the definition of $(p_0,q_0)$, there exists a $\theta\in(\bar{\theta}_1^{(2)},\bar{\theta}_1^{(1)})$ such that $p_0>\theta$ and $q_0>\bar{\zeta}_2(\theta)$, and such $(\theta,\bar{\zeta}_2(\theta))$ can be taken in the point set $\mathbb{D}_0$ since $\mathbb{D}_0$ is dense in $\mathbb{D}$. 
Hence, we have $[\Phi_{\bzero}(p_0,q_0)]_{j,j'}\ge \varphi_{\bc}(p_0,q_0) = \infty$, and this contradicts finiteness of $\Phi_{\bzero}(p_0,q_0)$. 
As a result, we have $((\Gamma^{(1)} \cap \Gamma^{(2)})\setminus\calD) \cap \calD_{\bzero} = \emptyset$ and this completes the proof. 
\end{proof}

%%%%%%%%%%%%%%%%%%%%
%
%
\subsection{Asymptotic decay rates of marginal measures}

Let $(X_1,X_2)$ be a vector of random variables subject to the stationary distribution of  a two-dimensional reflecting random walk. The asymptotic decay rate of the marginal tail distribution in a form $\mathbb{P}(c_1 X_1+c_2 X_2>x)$ has been considered in \cite{Miyazawa11} (also see \cite{Kobayashi14}), where $(c_1,c_2)$ is a direction vector. 
In this subsection, we consider this type of asymptotic decay rate for the occupation measures. 

Let $c_1$ and $c_2$ be mutually prime positive integers. We assume $c_1\le c_2$; in the case of $c_1>c_2$, an analogous result can be obtained. For $k\ge 0$, define an index set $\scrI_k$ as
\[
\scrI_k=\{ l_2\in\mathbb{Z}_+; c_1 l_1 + c_2 l_2 = c_1 k\ \mbox{for some}\ l_1\in\mathbb{Z}_+ \}. 
\]
For $\bx\in\mathbb{Z}_+^2$, the matrix moment generating function $\Phi_{\bx}(c_1 \theta,c_2 \theta)$ is represented as 
\begin{equation}
\Phi_{\bx}(c_1 \theta,c_2 \theta) = \sum_{k=0}^\infty e^{k c_1\theta}  \sum_{l\in\scrI_n} N_{\bx,(k-(c_2/c_1) l,l)}. 
\end{equation}
By the Cauchy-Hadamard theorem, we obtain the following theorem.
\begin{theorem} \label{th:asymptotic_marginal}
For any mutually prime positive integers $c_1$ and $c_2$ such that $c_1\le c_2$ and for every $(\bx,j)\in\mathbb{S}_+$ and $j'\in S_0$, 
\begin{align}
%&\quad \limsup_{n\to\infty} \frac{1}{n} \log  \sum_{k\in\scrI_n} [N_{\bx,(n-(c_2/c_1) k,k)}]_{j,j'} \cr
& \limsup_{k\to\infty} \frac{1}{k} \log  \sum_{l\in\scrI_k} \tilde{q}_{(\bx,j),(k-(c_2/c_1) l,l,j')} 
= - \sup_{(c_1 \theta,c_2 \theta)\in\Gamma} c_1 \theta.
\end{align}
In the case where $c_2\le c_1$, an analogous result holds. 
\end{theorem}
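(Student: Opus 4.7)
The plan is to read the sum as a coefficient in a power series related to $[\Phi_{\bx}(c_1\theta,c_2\theta)]_{j,j'}$, obtain the upper bound from the convergence-domain identity $\calD_{\bx}=\calD$ of Corollary \ref{co:domain_Phix} via the Cauchy--Hadamard theorem, and obtain the matching lower bound from Theorem \ref{th:asymptotic_any_direction} along suitably chosen integer directions. Writing $l=c_1 m$ (forced because $\gcd(c_1,c_2)=1$), the states entering
\[
a_k:=\sum_{l\in\scrI_k}\tilde q_{(\bx,j),(k-(c_2/c_1)l,l,j')}
\]
are exactly the lattice points of $\mathbb{Z}_+^2$ on the line $c_1 x_1+c_2 x_2=c_1 k$. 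With $b_n:=\sum_{c_1 k_1+c_2 k_2=n}[N_{\bx,(k_1,k_2)}]_{j,j'}$ one has $a_k=b_{c_1 k}$ and $[\Phi_{\bx}(c_1\theta,c_2\theta)]_{j,j'}=\sum_{n\ge 0}b_n e^{n\theta}$.

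For the upper bound, set $\theta^{\ast}:=\sup\{\theta\in\mathbb{R}:(c_1\theta,c_2\theta)\in\Gamma\}$; by Corollary \ref{co:domain_Phix} the series $\sum_n b_n e^{n\theta}$ is finite for every $\theta<\theta^{\ast}$, which majorises the subseries $\sum_k a_k e^{c_1 k\theta}$. Hence $a_k=O(e^{-c_1 k\theta})$ for every such $\theta$, and letting $\theta\uparrow\theta^{\ast}$ yields $\limsup_k(1/k)\log a_k\le -c_1\theta^{\ast}$; this is the Cauchy--Hadamard step the surrounding text alludes to.

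For the matching lower bound I appeal to Theorem \ref{th:asymptotic_any_direction}. For any positive integers $(d_1,d_2)$ the state $(c_1 d_1 N,c_1 d_2 N)$ lies on the summation line with $k=N(c_1 d_1+c_2 d_2)$, so $a_k\ge [N_{\bx,(c_1 d_1 N,c_1 d_2 N)}]_{j,j'}$; Theorem \ref{th:asymptotic_any_direction} in the integer direction $(c_1 d_1,c_1 d_2)$ gives the exact decay rate $-c_1\sup_{\btheta\in\Gamma}(d_1\theta_1+d_2\theta_2)$ of this single term, whence
\[
\limsup_{k\to\infty}\frac{1}{k}\log a_k\ge -\frac{c_1\sup_{\btheta\in\Gamma}(d_1\theta_1+d_2\theta_2)}{c_1 d_1+c_2 d_2}.
\]
Optimising over positive integer pairs $(d_1,d_2)$ via the normalised direction $(\lambda,\mu)=(d_1,d_2)/(c_1 d_1+c_2 d_2)$ on the segment $L=\{c_1\lambda+c_2\mu=1,\,\lambda,\mu\ge 0\}$, a minimax swap on the bilinear form $(\btheta,(\lambda,\mu))\mapsto\lambda\theta_1+\mu\theta_2$ over $\bar\Gamma\times L$ identifies $\inf_{L}\sup_{\bar\Gamma}(\lambda\theta_1+\mu\theta_2)$ with $\sup_{\bar\Gamma}\min(\theta_1/c_1,\theta_2/c_2)$; a level-set analysis of the L-shaped level sets of $\min(\cdot/c_1,\cdot/c_2)$ shows the latter supremum is attained on the ray through $(c_1,c_2)$ at the boundary point $(c_1\theta^{\ast},c_2\theta^{\ast})$ and equals $\theta^{\ast}$. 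Density of positive rational directions in $L$ together with continuity of the support function then lets the infimum be approached by genuine integer $(d_1,d_2)$, delivering $\limsup_k(1/k)\log a_k\ge -c_1\theta^{\ast}$. The last minimax-plus-density step is the main obstacle, since the paper's Cauchy--Hadamard invocation alone gives only the upper bound; one really needs Theorem \ref{th:asymptotic_any_direction} plus the geometric fact that the outward normal to $\bar\Gamma$ at $(c_1\theta^{\ast},c_2\theta^{\ast})$ can be approximated by positive integer vectors to close the gap.
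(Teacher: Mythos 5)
Your upper bound is the paper's argument: majorize the sublattice sum by $[\Phi_{\bx}(c_1\theta,c_2\theta)]_{j,j'}$ and appeal to Corollary \ref{co:domain_Phix} together with Cauchy--Hadamard. For the lower bound you abandon Cauchy--Hadamard and reassemble the rate from Theorem \ref{th:asymptotic_any_direction} along auxiliary integer directions with a minimax and density argument; that is a genuinely different route from the paper's one-line invocation. Your stated motivation, though, misreads the paper: Cauchy--Hadamard is a two-sided statement ($\limsup_k a_k^{1/k}=1/R$ exactly), so it would deliver the lower bound as well once the radius $R$ is pinned down. The real reason the paper's one-liner does not suffice is one you only mention in passing: for $c_1>1$ the series $\sum_k a_k z^k$ (in $z=e^{c_1\theta}$) runs only over the lines $c_1 k_1+c_2 k_2\in c_1\mathbb{Z}$ and is therefore a strict \emph{subseries} of $[\Phi_{\bx}(c_1\theta,c_2\theta)]_{j,j'}=\sum_n b_n e^{n\theta}$, omitting all $n$ with $c_1\nmid n$. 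Corollary \ref{co:domain_Phix} pins down the radius of the full series, not of this subseries, so a separate lower-bound argument really is needed, and your reduction to Theorem \ref{th:asymptotic_any_direction} is the right idea.

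Your execution of that lower bound has a genuine gap, which you yourself flag. The minimax swap correctly gives $\inf_{L}\sup_{\bar{\Gamma}}(\lambda\theta_1+\mu\theta_2)=\sup_{\bar{\Gamma}}\min(\theta_1/c_1,\theta_2/c_2)$, but identifying this with $\theta^{*}=\sup\{\theta:(c_1\theta,c_2\theta)\in\Gamma\}$ is exactly the claim that the exit point $(c_1\theta^{*},c_2\theta^{*})$ of the ray $\theta\mapsto(c_1\theta,c_2\theta)$ from $\bar{\Gamma}$ lies on the arc $\mathbb{D}$ of $\partial\bar{\Gamma}$ where $\nabla\chi$ has nonnegative components. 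If the exit falls elsewhere on $\partial\bar{\Gamma}$ (one component of $\nabla\chi$ negative), the ray leaves $\bar{\Gamma}$ while still coordinatewise dominated by other boundary points, and then $\sup_{\bar{\Gamma}}\min(\theta_1/c_1,\theta_2/c_2)>\theta^{*}$, so your chain of inequalities stalls at the wrong constant. The same fact is needed for your density step, since only a normal direction with nonnegative components can be approximated by positive integer vectors $(d_1,d_2)$. You wave at this with a ``level-set analysis'' and with density of rational directions, but neither is proved, and this is the missing step. It is closable using the machinery the paper develops for Corollary \ref{co:domain_Phix} (the parametrization $\bar{\zeta}_2$ of $\mathbb{D}$ and the density of $\mathbb{D}_0$ in $\mathbb{D}$), but as written the proof is incomplete precisely there.
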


%%%%%%%%%%%%%%%%%%%%%%%%%%%%%%%%%%%%%%%
%
%  Section 6
%
%%%%%%%%%%%%%%%%%%%%%%%%%%%%%%%%%%%%%%%

%%%%%%%%%%%%%%%%%%%%%%%%%%%%%%%%%%%%%
%
\section{Concluding remarks} \label{sec:concluding}

Using our results, we can obtain a lower bound for the asymptotic decay rate of the stationary distribution in a 2d-QBD process. 
Let $\{\tilde{\bY}_n\}=\{(\tilde{X}_{1,n},\tilde{X}_{2,n},\tilde{J}_n)\}$ be a 2d-QBD process on state space $\mathbb{S}_+=\mathbb{Z}_+^2\times S_0$ and assume that the blocks of transition probabilities when $X_{1,n}>0$ and $X_{2,n}>0$ are given by  $A_{i,j},i,j\in\{-1,0,1\}$. 
Assume that $\{\bY_n\}$ is irreducible, aperiodic and positive recurrent and denote by $\bnu=((\nu_{x_1},\nu_{x_2},j); (x_1,x_2,j)\in\mathbb{S}_+)$ the stationary distribution of the 2d-QBD process. Further assume that the blocks $A_{i,j},i,j\in\{-1,0,1\},$ satisfy the property corresponding to Assumptions \ref{as:P_irreducible} and \ref{as:Q_irreducible}. 
For $x_1,x_2\ge 1$ and $j\in S_0$, we have
\begin{align}
\nu_{(x_1,x_2,j)} 
&= \sum_{j',j''\in S_0} \nu_{(0,0,j')} \tilde{p}_{(0,0,j'),(1,1,j'')} \tilde{q}_{(0,0,j''),(x_1-1,x_2-1,j)} \cr
&\quad + \sum_{l\in\{0,1\}}\, \sum_{j',j''\in S_0} \bigl\{ \nu_{(1,0,j')} \tilde{p}_{(1,0,j'),(1+l,1,j'')} \tilde{q}_{(l,0,j''),(x_1-1,x_2-1,j)} \cr
&\qquad\qquad\qquad\qquad + \nu_{(0,1,j')} \tilde{p}_{(0,1,j'),(1,1+l,j'')} \tilde{q}_{(0,l,j''),(x_1-1,x_2-1,j)} \bigr\} \cr
&\quad + \sum_{k=2}^\infty\, \sum_{l\in\{-1,0,1\}}\, \sum_{j',j''\in S_0} \bigl\{ \nu_{(k,0,j')} \tilde{p}_{(k,0,j'),(k+l,1,j'')} \tilde{q}_{(k+l-1,0,j''),(x_1-1,x_2-1,j)} \cr
&\qquad\qquad\qquad\qquad\qquad + \nu_{(0,k,j')} \tilde{p}_{(0,k,j'),(1,k+l,j'')} \tilde{q}_{(0,k+l-1,j''),(x_1-1,x_2-1,j)} \bigr\}, 
\label{eq:nu_tildeq}
\end{align}
where, for $\by,\by'\in\mathbb{S}_+$, $\tilde{p}_{\by,\by'}=\mathbb{P}(\tilde{\bY}_1=\by'\,|\,\tilde{\bY}_0=\by)$ and $\tilde{q}_{\by,\by'}$ is an element of the occupation measures in the corresponding 2d-MMRW, defined by \eqref{eq:tildeq_def}. 
By \eqref{eq:nu_tildeq} and Theorem \ref{th:asymptotic_any_direction}, for any vector $\bc=(c_1,c_2)$ of positive integers and for every $j\in S_0$, a lower bound for the asymptotic decay rate of the stationary distribution in the 2d-QBD process in the direction specified by $\bc$ is given as follows:
\begin{equation}
\liminf_{k\to\infty} \frac{1}{k} \log \nu_{(c_1 k,c_2 k,j)} 
\ge - \sup\{ \langle \bc, \btheta \rangle; \spr(A_{*,*}(\btheta))< 1,\,\btheta\in\mathbb{R}^2 \},
\label{eq:liminf_nu}
\end{equation}
where $A_{*,*}(\btheta)=\sum_{i,j\in\{-1,0,1\}} e^{\langle (i,j),\btheta \rangle} A_{i,j}$. 
Note that an upper bound for the asymptotic decay rate can be obtained by using the convergence domain of the matrix moment generating function for the stationary distribution and an inequality corresponding to \eqref{eq:tildeq_upper}. The convergence domain can be determined by Lemma 3.1 of \cite{Ozawa18} and Corollary \ref{co:domain_Phix}. 

%Considering MMRWs with countably many background states, Theorem \ref{th:asymptotic_any_direction} can be extended to $d$-dimensional MMRWs for $d>2$. It will be reported in the near future. 

%%%%%%%%%%%%%%%%%%%%%%%%%%%%%%%%%%%%%%%
%
%  References
%
%%%%%%%%%%%%%%%%%%%%%%%%%%%%%%%%%%%%%%%

%%%%%%%%%%%%%%%%%%%%%%%%%%%%%%%%%%%%%%%
%
%  Appendix A2
%
%%%%%%%%%%%%%%%%%%%%%%%%%%%%%%%%%%%%%%%
\appendix

\section{Proof of Proposition \ref{pr:QBDcp}} \label{sec:proof_QBDcp}

We use the following proposition for proving Proposition \ref{pr:QBDcp}. 
\begin{proposition} \label{pr:block_cp}
Let $C_{-1}$, $C_0$ and $C_1$ be $m\times m$ nonnegative matrices, where $m$ can be countably infinite, and define a matrix function $C_*(\theta)$ as 
\begin{equation}
C_*(\theta) = e^{-\theta} C_{-1} + C_0 + e^{\theta} C_1.
\end{equation}
Assume that, for any $n\in\mathbb{Z}_+$, $C_*(0)^n$ is finite and $C_*(0)$ is irreducible. 
Let $k$ be a positive integer and define a $k\times k$ block matrix $C^{[k]}(\theta)$ as 
\begin{align}
&C^{[k]}(\theta) = 
\begin{pmatrix}
C_0 & C_1 & & & e^{-\theta} C_{-1} \cr
C_{-1} & C_0 & C_1 & & & \cr
%& C_{-1} & C_0 & C_1 & & \cr
& \ddots & \ddots & \ddots & & \cr
& & C_{-1} & C_0 & C_1 \cr
e^{\theta} C_1 & & & C_{-1} & C_0
\end{pmatrix}.
\end{align}
Then, we have $\cp(C_*(\theta))=\cp(C^{[k]}(k \theta))$. 
\end{proposition}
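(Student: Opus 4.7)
The plan is to prove the equality by directly relating individual entries of $C_*(\theta)^n$ and of $(C^{[k]}(k\theta))^n$ through explicit path-counting, and then pass to $n$-th roots via the formula
\[
\cp(A)^{-1} \;=\; \sup_{i,j}\,\limsup_{n\to\infty}\,[A^n]_{i,j}^{1/n},
\]
which holds for any nonnegative matrix $A$ and follows directly from the definition of $\cp$ applied entrywise (the root test). For $n\ge 0$ and $\Delta\in\mathbb{Z}$, introduce the nonnegative $m\times m$ matrix
\[
M_{n,\Delta} \;=\; \sum_{\substack{(s_1,\ldots,s_n)\in\{-1,0,1\}^n\\ s_1+\cdots+s_n=\Delta}} C_{s_n}C_{s_{n-1}}\cdots C_{s_1},
\]
so that $C_*(\theta)^n=\sum_{\Delta\in\mathbb{Z}}e^{\theta\Delta}M_{n,\Delta}$.

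A length-$n$ path from $(i,a)$ to $(j,b)$ in the transition graph of $C^{[k]}(k\theta)$ is a sequence of step-directions $(s_1,\ldots,s_n)\in\{-1,0,1\}^n$ (with $s_t$ recording whether step $t$ uses $C_1$, $C_0$, or $C_{-1}$) whose partial sums, reduced mod $k$, carry the first coordinate from $i$ to $j$. Writing $s_1+\cdots+s_n=(j-i)+kw$ with $w\in\mathbb{Z}$, the integer $w$ equals the signed net count of boundary wraps $k\!\to\!1$ (weight $e^{k\theta}$) minus $1\!\to\!k$ (weight $e^{-k\theta}$), so the boundary contribution to the path weight collapses to $e^{k\theta w}$. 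Summing over all admissible paths yields
\[
[(C^{[k]}(k\theta))^n]_{(i,a),(j,b)} \;=\; \sum_{w\in\mathbb{Z}}e^{k\theta w}\,[M_{n,\,(j-i)+kw}]_{a,b}.
\]
For fixed $i$, the map $(j,w)\in\{1,\ldots,k\}\times\mathbb{Z}\mapsto (j-i)+kw\in\mathbb{Z}$ is a bijection, so multiplying the above identity by $e^{\theta(j-i)}$ and summing over $j$ recovers the companion identity
\[
[C_*(\theta)^n]_{a,b} \;=\; \sum_{j=1}^{k}e^{\theta(j-i)}\,[(C^{[k]}(k\theta))^n]_{(i,a),(j,b)}.
\]

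Since every summand is nonnegative and $|j-i|\le k-1$, these two identities immediately give the entrywise bounds
\[
[(C^{[k]}(k\theta))^n]_{(i,a),(j,b)} \le e^{|\theta|(k-1)}\,[C_*(\theta)^n]_{a,b}
\]
and
\[
[C_*(\theta)^n]_{a,b} \le k\,e^{|\theta|(k-1)}\max_{1\le j\le k}[(C^{[k]}(k\theta))^n]_{(i,a),(j,b)}.
\]
Taking $n$-th roots, the multiplicative prefactors tend to $1$, so the two $\sup\limsup$ quantities appearing in the formula for $\cp^{-1}$ coincide, establishing $\cp(C^{[k]}(k\theta))=\cp(C_*(\theta))$. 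The hard part will be the combinatorial step in the second paragraph: one must verify carefully that, regardless of the order in which up- and down-wraps occur and regardless of the intermediate excursions of the first coordinate on $\mathbb{Z}/k\mathbb{Z}$, the accumulated boundary weight always reduces to $e^{k\theta w}$ with $w=((s_1+\cdots+s_n)-(j-i))/k$. The cleanest way is to lift the cyclic path to a path on $\mathbb{Z}$ starting at $i$: its endpoint is $i+\sum_{t}s_t$, whose reduction mod $k$ is $j$, and the net number of wraps is then the unambiguous quantity $((i+\sum_t s_t)-j)/k=w$, independent of the path's internal history.
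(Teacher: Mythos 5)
Your proof is correct, and it takes a genuinely different route from the paper's. The paper characterizes the convergence parameter through $\beta$-subinvariant measures (Theorem 6.3 of Seneta): given a measure $\bu$ with $\beta\bu\,C_*(\theta)\le\bu$ it constructs the block measure $\bu^{[k]}=(e^{(k-1)\theta}\bu,\,e^{(k-2)\theta}\bu,\ldots,\bu)$ and verifies $\beta\bu^{[k]}C^{[k]}(k\theta)\le\bu^{[k]}$, and for the converse direction it collapses a block measure $\bu^{[k]}=(\bu_1,\ldots,\bu_k)$ to $\bu=\sum_i e^{-(k-i)\theta}\bu_i$ by right-multiplying the subinvariance inequality with the block column vector $V^{[k]}$. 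Your argument is instead entirely combinatorial: you decompose matrix powers by step sequences, prove the exact identity
\[
[(C^{[k]}(k\theta))^n]_{(i,a),(j,b)}=\sum_{w\in\mathbb{Z}}e^{k\theta w}[M_{n,(j-i)+kw}]_{a,b},
\]
invert it via the bijection $(j,w)\mapsto(j-i)+kw$ to recover $[C_*(\theta)^n]_{a,b}$, and then convert the resulting two-sided entrywise bounds (with $n$-independent multiplicative constants) into equality of convergence parameters via the root-test formula $\cp(A)^{-1}=\sup_{i,j}\limsup_n[A^n]_{i,j}^{1/n}$. The winding-number bookkeeping (identifying the accumulated boundary weight with $e^{k\theta w}$ for $w=(\sum_t s_t-(j-i))/k$ via the lift to $\mathbb{Z}$) is handled correctly. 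Your approach is more elementary and self-contained---no appeal to subinvariant-measure theory---at the cost of the combinatorial bookkeeping; the paper's is shorter once Seneta's theorem is taken as a black box. One small elision in your final step: passing the $\limsup$ through the finite maximum $\max_{1\le j\le k}$ uses that $\limsup_n\max_j a_{n,j}^{1/n}=\max_j\limsup_n a_{n,j}^{1/n}$ for a max over a fixed finite index set; this is standard but worth stating.
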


\begin{proof}
First, assume that, for a positive number $\beta$ and a measure $\bu$, $\beta \bu C_*(\theta) \le \bu$, and define a measure $\bu^{[k]}$ as
\[
\bu^{[k]} = 
\begin{pmatrix} 
e^{(k-1)\theta} \bu & e^{(k-2)\theta} \bu & \cdots & e^{\theta} \bu & \bu
\end{pmatrix}.
\]
Then, we have $\beta \bu^{[k]} C^{[k]}(k\theta) \le \bu^{[k]}$ and, by Theorem 6.3 of \cite{Seneta06}, we obtain $\cp(C_*(\theta)) \le \cp(C^{[k]}(k\theta))$. 

Next, assume that, for a positive number $\beta$ and a measure $\bu^{[k]}=\begin{pmatrix} \bu_1 & \bu_2 & \cdots & \bu_k \end{pmatrix}$, $\beta \bu^{[k]} C^{[k]}(k\theta) \le \bu^{[k]}$, and define a measure $\bu$ as 
\[
\bu = e^{-(k-1)\theta} \bu_1 + e^{-(k-2)\theta} \bu_2 + \cdots + e^{-\theta} \bu_{k-1} +\bu_k.
\]
Further, define a nonnegative matrix $V^{[k]}$ as
\[
V^{[k]} = 
\begin{pmatrix}
e^{-(k-1)\theta} I & e^{-(k-2)\theta} I  & \cdots & e^{-\theta} I & I
\end{pmatrix}.
\]
Then, we have $\beta \bu^{[k]} C^{[k]}(k\theta) V^{[k]} = \beta \bu C_*(\theta)$ and $\bu^{[k]} V^{[k]} = \bu$. Hence, we have $\beta \bu C_*(\theta) \le \bu$ and this implies $\cp(C^{[k]}(k\theta)) \le \cp(C_*(\theta))$.
\end{proof}

%%%%%%%%%%%
%
\begin{proof}[Proof of Proposition  \ref{pr:QBDcp}]
For $j\in\{-1,0,1\}$, define a matrix function $B_{*,j}(\theta_1)$ as 
\[
B_{*,j}(\theta_1) = e^{-\theta_1} B_{-1,j} + B_{0,j} + e^{\theta_1} B_{1,j}. 
\]
The matrix function ${}^{\bc}\!A_{*,*}(\theta_1,\theta_2)$ is a $c_1c_2\times c_1c_2$ block matrix and given as
\begin{equation}
{}^{\bc}\!A_{*,*}(\theta_1,\theta_2) = 
\begin{pmatrix}
B_{*,0}(\theta_1) & B_{*,1}(\theta_1) & & & e^{-\theta_2} B_{*,-1}(\theta_1) \cr
B_{*,-1}(\theta_1) & B_{*,0}(\theta_1) & B_{*,1}(\theta_1) & & \cr
%& B_{*,-1}(\theta_1) & B_{*,0}(\theta_1) & B_{*,1}(\theta_1) & & \cr
& \ddots & \ddots & \ddots & & \cr
& & B_{*,-1}(\theta_1) & B_{*,0}(\theta_1) & B_{*,1}(\theta_1) \cr
e^{\theta_2} B_{*,1}(\theta_1) & & & B_{*,-1}(\theta_1) & B_{*,0}(\theta_1)
\end{pmatrix}.
\label{eq:cAss}
\end{equation}
Define a matrix function $B_{*,*}(\theta_1,\theta_2)$ as
\[
B_{*,*}(\theta_1,\theta_2) = e^{-\theta_2} B_{*,-1}(\theta_1) + B_{*,0}(\theta_1) + e^{\theta_2} B_{*,1}(\theta_1). 
\]
Then, by Proposition \ref{pr:block_cp} and \eqref{eq:cAss}, we have $\cp(B_{*,*}(\theta_1,\theta_2))=\cp({}^{\bc}\!A_{*,*}(\theta_1,c_2 \theta_2))$. 
The matrix function $B_{*,*}(\theta_1,\theta_2)$ is a $c_1\times c_1$ block matrix and given as 
\begin{equation}
B_{*,*}(\theta_1,\theta_2) = 
\begin{pmatrix}
A_{0,*}(\theta_2) & A_{1,*}(\theta_2) & & & e^{-\theta_1} A_{-1,*}(\theta_2) \cr
A_{-1,*}(\theta_2) & A_{0,*}(\theta_2) & A_{1,*}(\theta_2) & & \cr
%& A_{-1,*}(\theta_2) & A_{0,*}(\theta_2) & A_{1,*}(\theta_2) & & \cr
& \ddots & \ddots & \ddots & & \cr
& & A_{-1,*}(\theta_2) & A_{0,*}(\theta_2) & A_{1,*}(\theta_2) \cr
e^{\theta_1} A_{1,*}(\theta_2) & & & A_{-1,*}(\theta_2) & A_{0,*}(\theta_2)
\end{pmatrix}.
\label{eq:Bss}
\end{equation}
Hence, by Proposition \ref{pr:block_cp}, we have $\cp(A_{*,*}(\theta_1,\theta_2))=\cp(B_{*,*}(c_1\theta_1,\theta_2))$ and this implies 
\[
\cp(A_{*,*}(\theta_1,\theta_2)) 
=\cp(B_{*,*}(c_1\theta_1,\theta_2))
=\cp({}^{\bc}\!A_{*,*}(c_1\theta_1,c_2 \theta_2)). 
\]
\end{proof}

\end{document}